\theoremstyle{plain}
\newtheorem{Thm}{Theorem}[section]
\newtheorem{Prop}[Thm]{Proposition}
\newtheorem{Lem}[Thm]{Lemma}
\newtheorem{Cor}[Thm]{Corollary}
\theoremstyle{remark}
\newtheorem{Rem}[Thm]{Remark}
\theoremstyle{definition}
\newtheorem{Ex}[Thm]{Example}
\newtheorem{Not}[Thm]{Notation}
\newtheorem{Def}[Thm]{Definition}
\newtheorem{Defs}[Thm]{Definitions}
\newcommand{\tripod}[3]{\raisebox{4ex}{ \Tree [.{#2}
      [. {#1} {#3} ] ]} }
\newcommand{\nopod}[3]{\ensuremath{\raisebox{3ex}{ \Tree [.{#2}  {#1}  {#3}  ]} }}
\newcommand{\quot}[2]{\ensuremath{\raisebox{-.4ex}[0pt][0pt]#1 \hspace{0.1ex}\backslash \hspace{-0.1ex}\raisebox{0.2ex}[0pt][0pt]#2}}
\newcommand{\simfk}{\ensuremath{\langle\sim_{f^{k}}\rangle}}
\newcommand{\R}{\mathbb{R}}
\newcommand{\Z}{\mathbb{Z}}
\def\coloneqq{\mathrel{\mathop\mathchar"303A}\mkern-1.2mu=}
\DeclareMathOperator{\dist}{d}
\DeclareMathOperator{\Aut}{Aut}
\DeclareMathOperator{\Out}{Out}
\DeclareMathOperator{\Inn}{Inn}
\DeclareMathOperator{\Hyp}{Hyp}
\DeclareMathOperator{\Lip}{Lip}
\DeclareMathOperator{\Coll}{Coll}
\DeclareMathOperator{\axis}{axis}
\title{Stretching factors, metrics and train tracks for free products}
\author{Stefano Francaviglia, Armando Martino}
\begin{document}

\maketitle

\begin{abstract}
In this paper we develop the metric theory for the outer space of a free product
of groups. This generalizes the theory of the outer space of a free group, and includes
its relative versions. The outer space of a free product is made of $G$-trees with possibly
non-trivial vertex stabilisers. The strategies are the same as in the classical case, with some
technicalities arising from the presence of infinite-valence vertices.

We describe the Lipschitz metric and show how to compute it; we prove the
existence of optimal maps; we describe geodesics represented by folding paths.

We show that train tracks representative of irreducible (hence hyperbolic) automorphisms exist
and that their are metrically characterized as minimal displaced points, showing in particular
that the set of train tracks is closed (in particular answering to some questions raised
in~\cite{HM} concerning the axis bundle of irreducible automorphisms).

Finally, we include a proof of the existence of simplicial train tracks map without
using Perron-Frobenius theory.

A direct corollary of this general viewpoint is an easy proof that relative train track maps exist in both the free group and free product case.
\end{abstract}
\tableofcontents

\section{Introduction}
In this paper we are interested in studying the outer space of a free product of groups.
Namely, given a group of the form $G=G_1*\dots*G_p*F_k$, we study the set of trees where $G$
acts with vertex stabilizers the $G_i$'s. In the case $G=G_1*\dots*G_p*F_k$ is the
free product decomposition of a finitely generated group $G$, then this was introduced by Guirardel and
Levitt in~\cite{GuirardelLevitt} in the case where this is the Grushko decomposition of $G$. That is, when each $G_i$ is freely
indecomposable and not isomorphic to $\Z$. However, we shall consider such spaces with respect to an arbitrary free product decomposition,
and not necessarily the natural Grushko one. Similar spaces are studied by Sykiotis in~\cite{Syk}.

The theory is similar to that of the case of free
groups, with the advantage that this unified viewpoint covers at once both the general case of a
free product as well as many ``relative'' cases of the classical Outer space. The Group of
isomorphisms that acts on $G$ will be that of automorphisms that preserve the set of conjugacy
classes of the $G_i$, (which coincides with $\Aut(G)$ in the case of the Grushko decomposition,
by the Kurosh subgroup theorem).

In particular, one can define the Lipschitz metric
(see~\cite{BestvinaYael,FrancavigliaMartino,FrancMart} for the classical case). The presence of
vertex stabilizers involves some technical complications (for instance, the Ascoli-Arzel\`a theorem
does not hold for spaces that are not locally compact) but the main results of the classical
case hold {\em mutatis mutandis}. For instance, optimal maps exist and Lipschitz factors can
be computed on a list of simple candidates. Also, geodesics are constructed via folding paths.

For the study of automorphisms a very useful tool is the theory of train track maps, developed
by Bestvina and Handel~\cite{BestvinaHandel} (see also~\cite{BFH1,BFH2,BFH3,BFH4})  and
extensively used in literature. This tool is available also in the present setting.

For studying train tracks, we chose to follow the metric viewpoint as in~\cite{BestvinaBers}.
In particular, we show that for an irreducible automorphism
the set of train tracks coincides with the set of minimally displaced elements. We
remark that there is no uniform definition of train track maps in the literature, even if the difference
from one definition to another is minimal. As the set of minimally displaced elements is
closed, this gives in particular a proof that the set of train tracks is closed, hence
answering to a question raised in~\cite{HM}, where the authors give a characterizations of the axis
bundle of an irreducible automorphism (see Remark~\ref{r22}).
We would also like to mention the very recent preprint~\cite{MP} about axis bundles.

Many of the results about train tracks that we are going to describe are well known (at least to
the experts) in the case
of free groups, and the proofs in our general setting do not require substantial changes. We
give here explicit and fully detailed proofs of all these facts for completeness. We refer the reader
also to the very recent and nice preprint~\cite{seb} that deals with local finite trees with
possibly non trivial edge-stabilizers, from
the same viewpoint of us. As S. Meinert pointed out, the fact the we work with trivial
edge-stabilizers is crucial, as Lemma~\ref{lem:Omapsexists} may fail in general. In this work
we do not develop the theories of geodesic currents and laminations  (\cite{CH,CHL1,CHL2,CHL3,CHL4,CHL5,CHR,Fra,Ha,K1,K2,KL1,KL2,KL3,KN1,KN2,KN3}) that would certainly be
of interest in this general case.

\textsc{Acknowledgments}. We are grateful to Yago Antolin Pichel and Camille Horbez
for the many interesting
discussions we had with them. Many thanks go to Sebastian Meinert for its helpful comments on a
previous version of this manuscript.
We want to thank the CRM of Barcelona and the LABEX of Marseille
for having hosted great research periods in geometric group theory, where we had the occasion
to discuss the subject of the present paper with so many great people.
We are clearly in debt with the organizers of such events.

We would also like to thank the Instituto Nazionale di Alta Matematica for support during the period in which this research was carried out.

\section{$G$-trees and lengths}\label{s:gtree}
For any simplicial tree $T$ (not necessarily locally compact), we denote by $VT$ and $ET$ the set of vertices and edges of $T$
respectively. A {\it simplicial metric tree}, is a simplicial tree equipped with a complete path
metric such that edges are isometric to closed intervals of $\mathbb R$. Note that the
simplicial structure on a metric tree is an additional structure that is not necessarily determined by the metric structure. However, we do require that all branch points be vertices, and generally we will simply take the set of vertices to be the set of branch points (which is determined by the metric structure).

For $x,y\in T$, we denote by $[x, y]_T$ (or simply by $[x,y]$ if there is no ambiguity concerning $T$) the unique path from $x$ to $y$, and for a path $\gamma$ in $T$
we denote by $l_T(\gamma)$ the length of $\gamma$ in $T$.

Let $G$ be a group. In this work, by a {\it $G$-tree} we mean a simplicial metric tree $T=(T,d_T)$,
where $G$ acts simplicially on $T$ and for all $g\in G$ and  $e\in ET$, $e$ and $ge$ are isometric.
In other words, $G$ is acting on $T$ by isometries and preserving the simplicial structure.

If $T$ is a $G$-tree then the quotient space $\quot{G}{T}$ is a graph. We denote by
$\pi_T:T\to \quot{G}{T}$ the projection map.

In general, a path $\gamma$ in $\quot{G}{T}$ may have
many lifts to $T$, even if we fix the initial point of the lift. This is because
each time $\gamma$ passes through an edge whose initial vertex has a lift with non-trivial stabilizer, we have many choices for the lift of the edge.

Let $T$ be a $G$-tree.  The following definitions depend on the action of $G$ on $T$.
An element $g\in G$ is called {\it hyperbolic} if it fixes no points. Any hyperbolic
element $g$ of $G$ acts by translation on a subtree of $T$ homeomorphic to the real line, called the
{\it axis } of $g$ and denoted by {\em $axis_T(g)$}. The {\it translation length} of $g$ is the distance that $g$ translates the
axis. The action of $G$ on $T$ defines a {\it length function} denoted by $l_T$ $$ l_T\colon
G\to \mathbb{R},\quad l_T(g)\colon= \inf_{x\in T}\dist_T(x,gx).$$
\begin{Rem}\label{rem:lengthg}
We note that, under our hypothesis, this $\inf$ is always achieved (see for example
\cite[1.3]{CullerMorgan}). In particular, $g\in G$ is hyperbolic if and only if $l_T(g)>0$.
\end{Rem}


\section{The Outer space of a free product}
We follow \cite{GuirardelLevitt}. We will consider groups $G$ of the form
$$G=G_1*\dots*G_p*F_k$$
where $(G_i)_{i=1}^p$ is a family of groups, and
$F_k$ denotes the free group of rank $0 \leq k < \infty$.

We will be mainly concerned with the case where $G$ admits a co-compact action on a tree with trivial edge stabilisers
and indecomposable vertex stabilisers, or equivalently a group of finite Kurosh rank. However, in general we will not assume that
that $G_i$ are indecomposable. That is, while $G$ may admit a decomposition as a free product of finitely many freely indecomposable groups,
we are interested in developing the subsequent theory in the situation where our given free
product decomposition is not necessarily of that kind. For instance, we will apply the theory in the case that $G$ is free, and the $G_i$ are
certain free factors of $G$.

Let $\mathcal{T}(G)$ denote the set of simplicial metric $G$-trees.
We say that two elements $T,T'$ of $\mathcal{T}(G)$ are {\em equivalent}, and we  write $T\sim T'$,
 if there
exists a $G$-equivariant isometry $f:T\to T'$.

Let $T\in\mathcal T(G)$. A vertex $v\in VT$ is {\it redundant}, if it has degree two, and any $g$
that fixes $v$ also fixes the edges adjacent to $v.$ It is {\it terminal} if $T-\{v\}$ is
connected. We will consider $G$-trees with no {\it redundant} vertices.

Let $\mathcal{O}=\mathcal{O}(G,(G_i)_{i=1}^p,F_k)$ be the subset of $\mathcal{T}(G)/\sim$ of simplicial, metric $G$-trees $T$, up to equivariant isometry, satisfying that
\begin{enumerate}
\item[(C0)] $T$ has no redundant vertices;
\item[(C1)] the $G$-action of $T$ is {\it minimal} (i.e there exist no proper invariant subtree), with trivial edge stabilizers;
\item[(C2)] for each $i=1,\dots, p,$ there is exactly one orbit of vertices with stabilizer conjugate to $G_i$ and all edge stabilizers are trivial;
\item[(C3)] all other vertices have trivial stabilizer. We will often refer to such vertices as {\em
    free} vertices.
\end{enumerate}

It may be worth to mention that under such assumptions, for any $T\in\mathcal O$ the quotient
$\quot GT$ is a finite graph.

The space $\mathcal{O}$ admit a natural action of $(0,\infty)$ defined by rescaling the metric, that
is to say, multiplying all lengths of the edges by the same number. The quotient space of
$\mathcal{O}$ by that action
is denoted by $\mathcal{PO}=\mathcal{PO}(G,(G_i)_{i=1}^p,F_k)$ and is called the {\it outer space } of
$G$. Sometimes $\mathcal{O}$
will be referred to as the {\it unprojectivized outer space} of $G$.

There is a natural map from $\mathcal{T}(G)$ to $\mathbb{R}^G$, mapping $T$ to
$(l_T(g))_{g\in G}$.
This map clearly factors through $\mathcal{T}(G)/\sim$. The following fact is proved
in~\cite[Thm 3.7]{CullerMorgan}
\begin{Lem}\label{lempaulin}
The restriction of the translation length function to $\mathcal O \to \mathbb R^G$ is injective.
\end{Lem}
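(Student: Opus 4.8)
The plan is to reduce the statement to the classical theorem of Culler and Morgan on length functions. Recall that \cite[Thm 3.7]{CullerMorgan} asserts that a minimal action of $G$ on an $\mathbb{R}$-tree (without inversions) is determined up to equivariant isometry by its translation length function, provided the action is ``irreducible'' or, more generally, is not a shift or does not fix an end; for actions with a global fixed point the length function is identically zero and carries no information, but such actions are excluded here by minimality together with condition (C1). So the real content is to check that every $T\in\mathcal O$ falls under the hypotheses of that theorem, and then to transport the conclusion back through the identification $\mathcal O\subseteq\mathcal T(G)/\sim$.

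First I would verify that, for $T\in\mathcal O$, the $G$-action on $T$ is minimal (this is (C1)) and has no global fixed point. If $G$ fixed a point of $T$, minimality would force $T$ to be a single point, which has a redundant (indeed terminal) structure incompatible with (C0)–(C3) as soon as some $G_i$ is nontrivial or $k\geq 1$; in the degenerate case $G$ trivial the statement is vacuous. Next I would record that the action is not dihedral/does not fix a pair of ends: since $G=G_1*\dots*G_p*F_k$ is a nontrivial free product (in the nondegenerate case), it contains hyperbolic elements with distinct axes — for instance two elements lying in different free factors, or in $F_k$ with $k\geq 2$ — whose translation lengths and axes witness that the action is not abelian and hence, by Culler–Morgan, lies in the regime where the length function is a complete invariant. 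The one case needing a separate word is $G=\mathbb Z$ (i.e.\ $p=0$, $k=1$): then $\mathcal O$ is a single point up to rescaling and the claim is immediate.

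Granting these verifications, the argument is then: if $T,T'\in\mathcal O$ have $l_T(g)=l_{T'}(g)$ for all $g\in G$, then by \cite[Thm 3.7]{CullerMorgan} there is a $G$-equivariant isometry $T\to T'$; but elements of $\mathcal O$ are by definition equivalence classes under exactly this relation $\sim$, so $T=T'$ in $\mathcal O$. Thus the map $\mathcal O\to\mathbb R^G$, $T\mapsto(l_T(g))_{g\in G}$, is injective. I would also remark that the isometry provided by Culler–Morgan is automatically compatible with the simplicial structures in our setting: since we take vertices to be branch points (a metric notion) and edges are the closures of the complementary components, any isometry is automatically simplicial, so no extra bookkeeping is needed to land inside $\mathcal T(G)/\sim$ rather than a larger space of $\mathbb R$-trees.

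The main obstacle is not any computation but correctly invoking the Culler–Morgan dichotomy and excluding the exceptional actions (global fixed point, fixed end, or dihedral action on a line); once one observes that a nontrivial free product acting minimally with trivial edge stabilizers necessarily contains a rank-two free subgroup acting with independent hyperbolic elements (hence the action is not among the exceptions), the rest is formal. A secondary point of care is the genuinely degenerate cases $G=1$ and $G=\mathbb Z$, which should be dispatched explicitly or excluded by a standing nontriviality assumption on $(G,(G_i),F_k)$.
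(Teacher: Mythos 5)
Your overall strategy is the same as the paper's, which in fact offers no proof at all beyond the bare citation of \cite[Thm 3.7]{CullerMorgan}; your contribution is to check that the hypotheses of Culler--Morgan actually hold for trees in $\mathcal O$, which is a reasonable thing to want spelled out. However, two of the verifications as written are wrong, and one of them leaves a genuine gap.

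First, the phrase ``two elements lying in different free factors'' does not produce hyperbolic elements: any $g\in G_i$ fixes the unique vertex stabilised by the relevant conjugate of $G_i$ (condition (C2)), so such $g$ is elliptic, and $l_T(g)=0$. What you presumably want is a product $g_1g_2$ with $g_1\in G_1$, $g_2\in G_2$ nontrivial, or a hyperbolic element of $F_k$; such elements are hyperbolic by standard ping-pong.

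Second, and more seriously, the claim that ``a nontrivial free product acting minimally with trivial edge stabilizers necessarily contains a rank-two free subgroup acting with independent hyperbolic elements'' is false for $G=\mathbb Z_2*\mathbb Z_2\cong D_\infty$, which is a perfectly legitimate member of the class considered here (the $G_i$ are not assumed freely indecomposable of a particular sort, and $\mathbb Z_2$ is allowed). That group is virtually cyclic, contains no $F_2$, and its minimal action on a tree is precisely the dihedral action on a line --- one of the exceptional cases you set out to exclude. So the argument as given does not cover it. The lemma is nonetheless true in that case, but not for the reason you give: in Culler--Morgan's terminology a length function is \emph{abelian} when it arises from a homomorphism $G\to\mathbb R$, and since $D_\infty$ has finite abelianisation, its (nonzero) length function on a line is not abelian; hence \cite[Thm 3.7]{CullerMorgan} still applies. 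Alternatively one can dispatch $D_\infty$ by hand, as you did for $\mathbb Z$: the line with its two orbits of fixed points is determined up to equivariant isometry by the translation length of $g_1g_2$. Either fix would close the gap; as written, the reduction is incomplete.

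The closing remark about isometries being automatically simplicial (since vertices are taken to be branch points) is a good observation and addresses a point the paper glosses over.
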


The {\it axes topology} on $\mathcal{O}$ is the topology induced as a subspace of
$\mathbb{R}^G.$

As in \cite{GuirardelLevitt}, there are in fact two topologies on $\mathcal{O}$.
There is, as in Outer Space, the simplicial topology which is different from the Gromov topology
(which coincides with the axes topology). The metric we study in the following discussion induces the same topology as the
axes topology.

\begin{Def}
  The group $\Aut(G,\mathcal O)$ is the group of automorphisms that preserve the set of
  conjugacy classes of the $G_i$'s. Namely $\phi\in \Aut(G)$ belongs to $\Aut(G,\mathcal O)$ if
  $\phi(G_i)$ is conjugate to one of the $G_i$'s.
\end{Def}
In the case of the Grushko decomposition $\Aut(G)=\Aut(G,\mathcal O)$.
The group $\Aut(G,\mathcal O)$ acts on $\mathcal{T}(G)$ by changing the action.
That is, for $\phi\in \Aut(G)$ and $T$ in $\mathcal{T}(G),$ the image of $T$ under $\phi$  is the
$G$-tree with the same underlying tree as $T$, endowed with the action given by $(g,x)\in G\times
T\mapsto \phi(g) x\in T$.
If $\phi_h$ is the automorphism of $G$ given by conjugation by $h\in G,$ ($g\mapsto h^{-1}gh$), then for
every $T\in \mathcal{T}(G),$ $T\sim \phi_h(T)$ via the map $T\to \phi_h(T),$ $x\mapsto h^{-1}x.$
Thus $\Out(G,\mathcal O)=\Aut(G,\mathcal O)/\Inn(G)$ acts on $\mathcal{T}(G)/\sim.$


\section{The Metric}
\subsection{$\mathcal{O}$-Maps}
Let $T$ be a $G$-tree. Denote by $\Hyp(T)$ set of elements $g\in G$ whose the action on $T$ is
hyperbolic (see \cite{CullerMorgan} for details).  If $T\in \mathcal{O}$ and $g\notin\Hyp(T),$ then $g$ fixes a vertex of $T,$ and by (C2)
there exits
$i\in \{1,\dots, p\}$ such that $g$ lies in a $G$-conjugate of $G_i.$ Conversely, if $g$ lies in a
$G$-conjugate of some $G_i$, $i\in \{1,\dots, p\},$ by (C2) $g$ fixes a vertex, and then it is not
hyperbolic. Therefore, $g\in G$ is hyperbolic for $T\in\mathcal O$ if and only if it is hyperbolic
for any other element of $\mathcal O$. The set of hyperbolic elements of $G$ for some (and
hence for all) $T$ in $\mathcal{O}$ is denoted by $\Hyp(\mathcal{O})$.

\begin{Def}[$\mathcal{O}$-maps]
Let $A,B\in \mathcal{O}.$ An {\it $\mathcal{O}$-map}  $f\colon A\to B$ is a $G$-equivariant,
Lipschitz continuous, surjective function. (Note that we don't require to $f$ to be a graph
morphism). We denote by $\Lip(f)$ the {\it  Lipschitz constant  of $f,$} that is the smallest
constant $K\geq 0$ such that, for all $x_1,x_2\in A$ $$\dist_B(f(x_1),f(x_2))\leq K
\dist_A(x_1,x_2).$$
\end{Def}

\begin{Lem}\label{lem:Omapsexists}
For every pair $A,B\in \mathcal{O},$ there exists a $\mathcal{O}$-map  $f\colon A\to B$. Moreover,
any two $\mathcal O$-maps from $A$ to $B$ coincide on the non-free vertices.
\end{Lem}
\begin{proof}
Let $A, B$ be two $G$-trees. Let $v$ be a non-free vertex $A$ with stabilizer
 $\mathrm{stab}(v)=H< G$. By $(C2)$  $H$ is conjugate to one of the $G_i$'s, and again by $(C2)$
there exist a unique vertex $w$ of $B$ which is fixed by $H$. Define $f(v)=w$. Do the same for all
the non free-vertices of $A$. The map defined so far on
non-free vertices is
equivariant because $v$ is stabilized by $H$ if and only if $gv$ is stabilized by $gHg^{-1}$.
It follows that $f(gv)=gw=gf(v)$. Note that this argument also proves the second claim.

Now, extend the map $f$  equivariantly on the orbits of free vertices without requiring any other
condition. Note that each orbit of a free vertex is simply isomorphic to $G$, as a $G$-set, and we simply map each free $G$ orbit of vertices to another free $G$ orbit of vertices. However, note that we do \emph{not} require that distinct orbits map to distinct orbits.

We have now defined an equivariant map on all the vertices of $A$. Each component of the complement of the vertices is an (open) edge, and the $G$ action is free on the set of edges. Therefore, we may define the map linearly on the edges and this will clearly be equivariant. Thus we have defined an equivariant
map which is Lipschitz continuous because $G$-trees of $\mathcal O$ have only finitely many orbits of
vertices and edges. Moreover, the map $f$ is surjective because its image is a $G$-invariant
sub-tree of $B$, that must be $B$ due to $(C1)$. Thus $f$ is an $\mathcal O$-map.
\end{proof}

\begin{Lem}\label{lem:minn}
Let $A,B\in\mathcal{O}.$
For any $\mathcal O$-map $f\colon A\to B$, we have
$$\sup_{g\in \Hyp(\mathcal{O})}\dfrac{l_B(g)}{l_A(g)}\leq \Lip(f) $$
\end{Lem}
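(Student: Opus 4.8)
The inequality is the standard ``length functions decrease under $\mathcal{O}$-maps'' estimate, and the natural approach is to bound the displacement of a single hyperbolic element and then take a supremum. Fix an $\mathcal{O}$-map $f\colon A\to B$ and let $K=\Lip(f)$. Fix a hyperbolic element $g\in\Hyp(\mathcal{O})$. By Remark~\ref{rem:lengthg} there is a point $x\in A$ realising the infimum, i.e.\ $\dist_A(x,gx)=l_A(g)$; in fact one may even take $x$ on $\axis_A(g)$, so that $[x,gx]$ is a fundamental domain for the action of $g$ on its axis. The image point $f(x)\in B$ then satisfies
$$\dist_B(f(x),g f(x)) = \dist_B(f(x),f(gx)) \le K\,\dist_A(x,gx) = K\, l_A(g),$$
where the first equality uses $G$-equivariance of $f$ (so $f(gx)=g f(x)$) and the middle inequality is the definition of the Lipschitz constant.

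It remains to observe that $l_B(g)\le \dist_B(y,gy)$ for \emph{every} $y\in B$, which is immediate from the definition $l_B(g)=\inf_{y\in B}\dist_B(y,gy)$. Applying this with $y=f(x)$ gives $l_B(g)\le K\, l_A(g)$. Since $g$ is hyperbolic for every tree in $\mathcal{O}$ (as recalled before the definition of $\mathcal{O}$-maps), we have $l_A(g)>0$ by Remark~\ref{rem:lengthg}, so we may divide to get $l_B(g)/l_A(g)\le K=\Lip(f)$. As this holds for all $g\in\Hyp(\mathcal{O})$, taking the supremum over such $g$ yields
$$\sup_{g\in\Hyp(\mathcal{O})}\frac{l_B(g)}{l_A(g)}\le \Lip(f),$$
as claimed.

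There is essentially no obstacle here: the only points requiring any care are (i) making sure the infimum defining $l_A(g)$ is attained, which is exactly the content of Remark~\ref{rem:lengthg} and relies on the hypotheses on $G$-trees in $\mathcal{O}$, and (ii) the bookkeeping that $g\in\Hyp(\mathcal{O})$ is hyperbolic for both $A$ and $B$ simultaneously, so that both length functions are strictly positive and the quotient makes sense. Everything else is a one-line application of equivariance and the Lipschitz bound. I would present the argument in the three steps above: produce a minimiser $x$ for $l_A(g)$, push it forward to bound $\dist_B(f(x),gf(x))$ by $K\,l_A(g)$, and then use that $f(x)$ is an admissible test point for the infimum defining $l_B(g)$.
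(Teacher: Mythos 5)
Your proof is correct and follows essentially the same route as the paper: take a point $x$ realising $l_A(g)$ (via Remark~\ref{rem:lengthg}), use $G$-equivariance to write $\dist_B(f(x),gf(x))=\dist_B(f(x),f(gx))$, bound this by $\Lip(f)\,l_A(g)$, and note that $f(x)$ is a valid test point in the infimum defining $l_B(g)$. Your version is marginally more streamlined than the paper's (which bounds $\dist_B(f(x),f(gx))$ by the length of the image path $f([x,gx]_A)$ before invoking the Lipschitz constant), but the underlying argument is the same.
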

\begin{proof}

Let $f\colon A\to B$ be an $\mathcal{O}$-map, $g\in G,$ and  $x\in A.$ Since $f$ is continuous
$[f(x), f(gx)]_B\subseteq f([x, gx]_A),$  then
\begin{align*}
l_B(g) \leq &\dist_B(f(x),gf(x))\\
=&\dist_B(f(x), f(gx))\\
\leq& l_B(f([x,gx]_A))\\
\leq& \Lip(f) l_A([x,gx]_A).
\end{align*}
By Remark \ref{rem:lengthg}, there exists  $x_g\in A$ realizing $l_A(g),$ that is
$l_A([x_g,gx_g]_A)=l_A(g).$ Using $x_g$ in the previous inequality, we conclude that
\begin{equation}
l_B(g) \leq \Lip(f)l_A(g).
\end{equation}
\end{proof}

\subsection{The Metrics}
\begin{Defs}
For any pair $A,B\in \mathcal{O}$ we define the right and left maximal stretching factors
$$\Lambda_R(A,B)\coloneqq \sup_{g\in \Hyp(\mathcal{O})}\dfrac{l_B(g)}{l_A(g)}\quad
\Lambda_L(A,B)\coloneqq \sup_{g\in \Hyp(\mathcal{O})}\dfrac{l_A(g)}{l_B(g)}=\Lambda_R(B,A)$$
and {\em asymmetric pseudo-distances}
$$\dist_R(A,B)\coloneqq \log\Lambda_R(A,B)\qquad
\dist_L(A,B)\coloneqq \log\Lambda_L(A,B)=\dist_R(B,A).$$

We define $\Lambda(A,B)\coloneqq \Lambda_R(A,B)\Lambda_L(A,B)$ and the {\em distance} between
$A$ and $B$ as
$$\dist(A,B)\coloneqq \log \Lambda(A,B).$$
\end{Defs}

Directed triangular inequalities are readily checked for $d_R$ and $d_L$, thus triangular inequality
holds for $d$. Moreover, $d$ is a genuine distance on $\mathcal O$ as $d(A,B)=0$ gives
$l_B(g)=l_A(g)$ for any element of $G$, and this implies that $A=B$ by Lemma~\ref{lempaulin}.
The functions $d_R$ and $d_L$ become asymmetric distances once restricted to the subset of
$\mathcal O$ of $G$-trees with co-volume one, which can be identified with $\mathcal{PO}$.
(See for example \cite{FrancavigliaMartino,FrancMart,BestvinaYael} for the study of such
functions in the case of outer space of free groups.)

\begin{Lem}
The action of $\Aut(G,\mathcal O)$ on $\mathcal O$ is by isometries.
\end{Lem}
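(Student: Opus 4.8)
The plan is to show that the action of $\Aut(G,\mathcal O)$ on $\mathcal O$ preserves all of the stretching factors $\Lambda_R$, $\Lambda_L$ and hence the distance $d$; since an isometry is by definition a bijection preserving $d$, and the action is by bijections (each $\phi$ has inverse $\phi^{-1}$), this suffices. The key observation is the relationship between the length functions of $T$ and $\phi(T)$: by the very definition of the action, $\phi(T)$ has the same underlying metric tree as $T$ but with $g$ acting as $\phi(g)$ did on $T$. Hence for every $g\in G$ we have $l_{\phi(T)}(g)=l_T(\phi(g))$, because $l_{\phi(T)}(g)=\inf_{x}d_T(x,\phi(g)x)=l_T(\phi(g))$.

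First I would record this identity $l_{\phi(T)}(g)=l_T(\phi(g))$ for all $g\in G$ and all $T\in\mathcal O$. Next I would check that $\phi$ maps $\mathcal O$ to itself and that $\phi$ permutes the set $\Hyp(\mathcal O)$: indeed $g\in\Hyp(\mathcal O)$ iff $g$ lies in no conjugate of any $G_i$, and since $\phi\in\Aut(G,\mathcal O)$ sends each $G_i$ to a conjugate of some $G_j$, it follows that $\phi(g)$ lies in a conjugate of some $G_i$ iff $g$ does; equivalently $\phi$ restricts to a bijection of $\Hyp(\mathcal O)$. Then, for $A,B\in\mathcal O$,
\begin{align*}
\Lambda_R(\phi(A),\phi(B))
&=\sup_{g\in\Hyp(\mathcal O)}\frac{l_{\phi(B)}(g)}{l_{\phi(A)}(g)}
=\sup_{g\in\Hyp(\mathcal O)}\frac{l_{B}(\phi(g))}{l_{A}(\phi(g))}
=\sup_{h\in\Hyp(\mathcal O)}\frac{l_{B}(h)}{l_{A}(h)}
=\Lambda_R(A,B),
\end{align*}
the third equality being the change of variables $h=\phi(g)$, valid because $\phi$ is a bijection of $\Hyp(\mathcal O)$. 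The same computation gives $\Lambda_L(\phi(A),\phi(B))=\Lambda_L(A,B)$, hence $\Lambda(\phi(A),\phi(B))=\Lambda(A,B)$ and $d(\phi(A),\phi(B))=d(A,B)$.

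Finally I would note that this is genuinely an action on $\mathcal O$ (not just $\mathcal T(G)$): it is well defined on $\sim$-classes because a $G$-equivariant isometry $A\to A'$ is the same data as a $\phi(G)$-equivariant isometry, hence descends to one $\phi(A)\to\phi(A')$; and it preserves conditions (C0)--(C3) since these are phrased in terms of vertex stabilizers and $\phi$ sends $\mathrm{stab}$-conjugates of $G_i$ to $\mathrm{stab}$-conjugates of some $G_j$, with (C2) preserved precisely because $\phi$ permutes the conjugacy classes of the $G_i$'s. I do not expect any real obstacle here; the only mild subtlety is bookkeeping the bijectivity of $\phi$ on $\Hyp(\mathcal O)$, which is where the hypothesis $\phi\in\Aut(G,\mathcal O)$ rather than merely $\phi\in\Aut(G)$ is used.
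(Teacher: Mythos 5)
Your proof is correct and follows essentially the same route as the paper: record the identity $l_{\phi(T)}(g)=l_T(\phi(g))$, observe that $\phi$ permutes $\Hyp(\mathcal O)$ because it preserves the conjugacy classes of the $G_i$, and then conclude by the change of variables in the supremum defining $\Lambda_R$. The paper's proof is just a more compressed version of this; the extra checks you make (well-definedness of the action on $\mathcal O$, bijectivity on $\Hyp(\mathcal O)$) are the right points to spell out.
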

\proof If $\phi\in\Aut(G,\mathcal O)$, the it preserves the conjugacy classes of the
$G_i$'s. Therefore $g$ is hyperbolic if and only if $\phi(g)$ is. Thus
$$\sup_{g\in \Hyp(\mathcal{O})}\dfrac{l_B(g)}{l_A(g)}=
\sup_{g\in \Hyp(\mathcal{O})}\dfrac{l_B(\phi(g))}{l_A(\phi(g))}=
\sup_{g\in \Hyp(\mathcal{O})}\dfrac{l_\phi(B)(g)}{l_\phi(A)(g)}.
$$
\qed


\section{Equivariant Ascoli-Arzel\'a}
In this section we provide a tool for computing stretching factors. We follow the approach
of~\cite{FrancavigliaMartino}. The main issue is that given $A,B\in\mathcal O$,
one needs to find a map between them
which optimize the Lipschitz constant. Since elements of $\mathcal O$ are not locally compacts,
Ascoli-Arzel\'a does not apply directly, and we need to control local pathologies by hand.

The lazy reader may skip this section by paying the small price of missing out on some definitions and the
beautiful proof of the equivariant version of Ascoli-Arzel\'a theorem.

\begin{Def}
A map $f:A\to B$ between metric graphs is called {\em piecewise linear}  if it is continuous
and
 for all edges $e$ of $A$, there exists a positive number $S_{f,e}$, called the
stretching factor of $f$ at $e$, such that the restriction of $f$ to $e$
 has constant speed $S_{f,e}$. More precisely, $f$ is piecewise linear if for any $e\in EA$, the following diagram
commutes and the vertical functions are local isometries:
\begin{figure}[h]
\centerline{
\xymatrix{
e\subset A \ar[rr]^{f_{|_e}}\ar[d]& & f(e)\subset B\\
\mathbb R\supset [0,l_A(e)]\ar[rr]_{t\mapsto S_{f,e}t}& &[0,S_{f,e}l_A(e)]\subset\mathbb R\ar[u]
}}
\end{figure}
\end{Def}
We remark that piecewise linear maps are locally injective on edges.
\begin{Def}[PL-map for trees]
Let $A,B\in \mathcal O$. We say that a function $f:A\to B$ is a {\it PL-map} if it is a piecewise
linear $\mathcal{O}$-map.
For any $\mathcal{O}$-map $f:A\to B$ we define the map $PL(f)$ as the unique $PL$-map that coincides
with $f$ on
vertices.
\end{Def}
\begin{Rem}
Let $f:A\to B$ be an $\mathcal O$-map and $e\in EA$.
 If $l_B(f(e))$ denotes the distance between the images of the vertices of $e$, then
by construction we have $S_{PL(f),e}=l_B(f(e))/l_A(e)\leq
\Lip(f_{|_e})\leq\Lip(f)$, for all $e\in EA.$ Therefore, $\Lip(PL(f))\leq \Lip(f).$
\end{Rem}

Before proving the equivariant Ascoli-Arzel\'a, we discuss an example.

\begin{Ex}

Consider a segment $[0,3]$ with free vertices and a segment $[0,1]$ with a vertex non free, say $0$, with associated group $\mathbb Z$. Consider the associated
trees $A=[0,3]$ and $B$. $B$ is a star-shaped tree with an infinite valence vertex, say $0$,
from which emanate infinite copies of $[0,1]_n$ labeled by $n\in\mathbb Z$.
Now consider the map $f:[0,3]\to [0,1]$
$$f(t)=
\left\{\begin{array}{ll}
  t& t\in[0,1]\\ 2-t&t\in[1,2]\\ t-2&t\in[2,3]
\end{array}
\right.$$

For any $n\in\mathbb Z$ there exist a lift $f_n:A\to B$ of $f$ such that the segment $[0,2]$ is
mapped to $[0,1]_n$ and $[2,3]$ is mapped to $[0,1]_0$. The sequence $f_n$ has no sub-sequence
that converges, but clearly if one ``straightens'' $f_n$ by collapsing $[0,2]$ to $0$, this
sequence becomes constant. Of course, this is safe because there is no $G$-action on $A$.
\end{Ex}

This is more or less everything that can go wrong. We now introduce the precise notion of collapsible and non-collapsible map.
\begin{Def}\label{def:collapsible}
Let $A\in\mathcal O$.
A subset $X\subset A$  is {\em collapsible} if $gX\cap X=\emptyset$ for any $Id\neq g\in G$.
\end{Def}

\begin{Def}
  Let $A,B\in\mathcal O$ and $f:A\to B$ be an $\mathcal O$-map. A {\em collapsible component}
  of $f$ is a connected component of $A\setminus f^{-1}(v)$, for a $v\in VB$ non-free, which is
  collapsible.
\end{Def}

\begin{Def}
  Let $A,B\in \mathcal O$ and $f:A\to B$ be an $\mathcal{O}$-map.
   $f$ is said {\em collapsible} if it has a collapsible component.
  $f$ is said {\em non-collapsible} if it is not collapsible.
\end{Def}

Note that $f$ is non-collapsible if any component $C$ of $A\setminus f^{-1}(v)$ either
contains a non-free vertex or there is point $w\in A$ and $id\neq g\in G$ so that both $w$ and
$gw$ belong to $C$.

\begin{Def}
  Let $A,B\in\mathcal O$ and $f:A\to B$ be an $\mathcal O$-map. $f$ is {\em $\sigma$-PL} if
  \begin{itemize}
  \item $\sigma$ is a simplicial structure $(V\sigma,E\sigma)$ on $A$ obtained by adding $2$-valent vertices to $A$.
  \item The number of $G$-orbits of edges of $\sigma$ is finite.
  \item For any $v\in VB$ non-free, $f^{-1}(v)$ is a forest (union of trees) with leaves in $V\sigma$.
  \item $f$ is PL w.r.t. $\sigma$.
  \end{itemize}
\end{Def}

Any $PL$-map $f:A\to B$ is $\sigma$-PL for the pull-back structure induced on $A$ by $\sigma$.

\begin{Lem}\label{lem:colfinite}
  Let $A,B\in\mathcal O$ and $f:A\to B$ be a $\sigma$-PL map. Then the number of  orbits of
  collapsible components of $f$ is finite.
\end{Lem}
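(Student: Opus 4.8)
The plan is to reduce the count of orbits of collapsible components to a count of orbits of edges of $\sigma$, which is finite by hypothesis. First I would fix a non-free vertex $v\in VB$; since $B\in\mathcal O$ has finitely many orbits of vertices, there are only finitely many choices of such $v$ up to the $G$-action, so it suffices to bound the number of orbits of collapsible components of $A\setminus f^{-1}(v)$ for each fixed $v$. Now $f^{-1}(v)$ is, by the definition of a $\sigma$-PL map, a forest with leaves in $V\sigma$; hence each connected component $C$ of $A\setminus f^{-1}(v)$ is an open subtree of $A$ whose frontier points all lie in $V\sigma$, and in particular $C$ contains at least one edge of $\sigma$ (a collapsible component, being disjoint from all its translates, is certainly non-degenerate, so it is not a single point). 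The key observation is that a collapsible component $C$ is by definition collapsible as a subset: $gC\cap C=\emptyset$ for every $g\neq \mathrm{id}$. Consequently, if $e$ is any edge of $\sigma$ contained in $C$, then the $G$-translates of $e$ that lie in $C$ are exactly $e$ itself — the stabilizer of such an edge within the component is trivial, and more importantly distinct translates $gC$ ($g$ ranging over a set of coset representatives) are disjoint and each contains the corresponding translate $ge$.

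So the strategy is: pick, for each orbit of collapsible components, a representative component $C$, and inside it pick an edge $e(C)\in E\sigma$. I claim the assignment $C\mapsto$ (orbit of $e(C)$) is injective on orbits of collapsible components. Indeed, suppose $C_1$ and $C_2$ are collapsible components with $e(C_2)=g\,e(C_1)$ for some $g\in G$. Then $g\,e(C_1)\subset C_2$, and also $e(C_1)\subset C_1$ so $g\,e(C_1)\subset g C_1$; since an edge of $\sigma$ lies in at most one component of $A\setminus f^{-1}(v)$ (the components are disjoint and their union is the complement of the forest, whose interior contains no edge of $\sigma$), we get $gC_1=C_2$, i.e.\ $C_1$ and $C_2$ are in the same orbit. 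This shows the number of orbits of collapsible components supported over a fixed $v$ is at most the number of orbits of edges of $\sigma$, which is finite. Summing over the finitely many orbits of non-free vertices $v$ of $B$ gives the lemma.

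The main point requiring care — and the only genuine obstacle — is the bookkeeping about which edges of $\sigma$ actually live inside a component $C$ of $A\setminus f^{-1}(v)$: one must make sure that $f^{-1}(v)$ really separates $A$ into pieces each of which meets $E\sigma$, and that no edge straddles the preimage. This is exactly what the hypothesis ``$f^{-1}(v)$ is a forest with leaves in $V\sigma$'' buys us: the preimage is a subcomplex of $\sigma$, so its complement is an open union of (sub)edges, and since it is a forest with leaves at $\sigma$-vertices, no edge of $\sigma$ is cut in its interior; hence each component of the complement is a union of open $\sigma$-edges and open half-edges, and — being a non-degenerate subtree — contains a full $\sigma$-edge as soon as it contains more than a single half-open segment, which a collapsible (hence translate-disjoint, hence non-trivial) component always does. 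Once this is nailed down, the orbit-counting argument above is immediate.
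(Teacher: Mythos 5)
Your overall strategy --- fix a non-free vertex $v$ and charge each orbit of collapsible components of $A\setminus f^{-1}(v)$ to the orbit of a $\sigma$-edge it contains --- is a natural attempt, and the preliminary bookkeeping is fine: the $\sigma$-PL hypothesis does make $f^{-1}(v)$ a subcomplex of $\sigma$, so each component of $A\setminus f^{-1}(v)$ is a union of open $\sigma$-edges together with the free $\sigma$-vertices between them, and in particular contains a whole $\sigma$-edge. The gap is in the injectivity claim. You argue that if $e(C_2)=g\,e(C_1)$ then $g\,e(C_1)\subset C_2$ and $g\,e(C_1)\subset gC_1$, and since a $\sigma$-edge lies in at most one component of $A\setminus f^{-1}(v)$, you get $gC_1=C_2$. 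But $gC_1$ is \emph{not} a priori a component of $A\setminus f^{-1}(v)$: since $C_1$ is a component of $A\setminus f^{-1}(v)$, the translate $gC_1$ is a component of $g\bigl(A\setminus f^{-1}(v)\bigr)=A\setminus f^{-1}(gv)$, which is a different decomposition of $A$ unless $gv=v$. Thus knowing that $C_2$ is the unique component of $A\setminus f^{-1}(v)$ through $ge(C_1)$ only tells you $gC_1\cap C_2\neq\emptyset$, not $gC_1=C_2$. Indeed, if $gC_1\neq C_2$ then a boundary point of one lies in the other; such a point maps under $f$ to one of $v,gv$ while avoiding the other's preimage, and this is perfectly consistent provided $gv\neq v$. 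So the assignment $[C]\mapsto[e(C)]$ on orbits need not be injective as you claim.

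This is exactly the difficulty the paper's proof is organized around. There one assumes, for contradiction, infinitely many orbits of collapsible components $C_i$; picks an orbit of edge $e$ and $g_i$ with $g_ie\in C_i$; uses collapsibility to get a uniform bound on the number of edges per component and takes $C_0$ maximal; then the essential step is to locate a \emph{common leaf} $x$ of the $g_iC_i$ inside $C_0$, giving $f(x)=g_iv$ for all $i$ and hence $g_i^{-1}g_j\cdot v=v$. Only \emph{after} establishing that these elements stabilize $v$ does the ``unique component through a fixed edge'' argument apply, yielding $C_i=g_i^{-1}g_jC_j$ and the contradiction. Your proof needs an analogue of this stabilizer step before the intended injectivity can go through; without it the argument, as written, does not close.
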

\proof
First note that $A\setminus f^{-1}(gv)=g(A\setminus f^{-1}(v))$. Hence, the orbits of components
corresponding to the orbit of $v$ have representatives in $A\setminus f^{-1}(v)$.
 Since there are finitely
many orbits of vertices, it is enough to show that
the collapsible components in $A\setminus f^{-1}(v)$ are contained in finitely many orbits.

We argue by contradiction and assume that we have infinitely many collapsible components $C_i$
of $A\setminus f^{-1}(v)$ in distinct orbits.

Since there are finitely many orbits of edges, we may assume that the orbit of some edge $e$
meets every $C_i$; hence there are $g_i\in G$ such that $g_ie\in C_i$. Moreover, for the same
reason and from the definition of collapsible component, we deduce that there is a
uniform bound on the  number of edges in any collapsible component.

Without loss of generality we may assume that the number of edges of $C_0$ is maximal amongst the
$C_i$ and that $g_0=Id$. Since $C_i$ and $C_0$ are not in the same orbit, $g_iC_i\neq C_0$. On
the other hand $g_iC_i\cap C_0\neq\emptyset$, thus one of them contains a leaf of the other.
Since $C_0$ is maximal there is a leaf $x_i$ of $g_iC_i$ in $C_0$.
Leaves of $C_i$ are $\sigma$-vertices, and since $C_0$ has finitely many vertices and edges, we may
assume that $x_i=x$ is independent of $i$, and that there is an edge $\xi$ of $C_0$ contained
in $g_iC_i$ for all $i$ --- note that $g_iC_i\cap C_0$ contains at least one edge because it is
the intersection of open sets ---

As $x$ is a leaf of $g_iC_i$, $f(x)=g_iv$ for all $i$. In particular $$g_i^{-1}g_j(v)=v$$
Since $\xi\subset g_iC_i\cap g_jC_j$ we have $C_i\cap g_i^{-1}g_j C_j\neq\emptyset$. However
$C_i$ is a component of $A\setminus f^{-1}(v)$ and $g_i^{-1}g_jC_j$ is a component of
$A\setminus f^{-1}(g_i^{-1}g_jv)=A\setminus f^{-1}(v)$. Hence they are equal contradicting the
fact that the $C_i$'s are in distinct orbits.\qed

\begin{Lem}\label{lem:decorbit}
  Let $A,B\in\mathcal O$. Let $f:A\to B$ be a collapsible $\sigma$-PL map. Then there is
  an $\mathcal O$-map $f_\bullet:A\to B$ such that:
  \begin{itemize}
  \item $f_\bullet$ is $\sigma$-PL (same $\sigma$).
  \item $\Lip(f_\bullet)\leq \Lip(f)$.
  \item The number of orbits of collapsible components of $f_\bullet$ is strictly smaller than that of $f$.
  \end{itemize}
\end{Lem}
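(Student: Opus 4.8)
\textsc{Proof proposal.}
The plan is to pick a collapsible component $C$ of $f$ and replace $f$ by the map $f_\bullet$ obtained by \emph{crushing the orbit of $C$ onto the orbit of the corresponding vertex}. So fix a non-free $v\in VB$ and a component $C$ of $A\setminus f^{-1}(v)$ that is collapsible. First I would record the structure of $C$. Since $f$ is PL with positive stretching factors, no edge of $\sigma$ is mapped to a point, so $f^{-1}(v)$ contains no edge and is just a set of $\sigma$-vertices; hence $C$ is open, it is a subtree of $A$, and its frontier $\partial C=\overline C\setminus C$ lies in $f^{-1}(v)\subseteq V\sigma$ (a frontier point must lie in $f^{-1}(v)$, because the components of $A\setminus f^{-1}(v)$ are open). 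As $\sigma$ has finitely many orbits of edges and $e,ge\subseteq C$ would give $ge\in gC\cap C$, the tree $C$ contains at most one edge per orbit, hence is finite (as in the proof of Lemma~\ref{lem:colfinite}); and $C$ contains no non-free vertex, since a nontrivial element fixing such a vertex would violate $gC\cap C=\emptyset$. In particular $\overline C$ is a finite subtree of $\sigma$ with all vertices in $V\sigma$ and all leaves in $\partial C$, and collapsibility says exactly that $GC=\bigsqcup_{g\in G}gC$ with $gC=C\iff g=\mathrm{Id}$.

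Next I would set
\[
f_\bullet(x)=\begin{cases}g\,v,& x\in gC\ \text{for some}\ g\in G,\\ f(x),& x\notin GC.\end{cases}
\]
This is well defined (if $x\in gC\cap hC$ then $g^{-1}h$ fixes a point of $C$, so $g=h$) and $G$-equivariant by construction. It is an $\mathcal O$-map: it is continuous, since on every $\sigma$-edge it agrees with $f$ outside the closure of at most one translate $gC$, on which $f_\bullet$ is constantly $gv$ while $f$ already sends $\partial(gC)\subseteq f^{-1}(gv)$ to $gv$; it is surjective, because $f_\bullet(A)$ is a non-empty $G$-invariant subtree of $B$, hence all of $B$ by (C1); and $\Lip(f_\bullet)\le\Lip(f)$, because for $x,y\in A$ the path $t\mapsto f_\bullet(\gamma(t))$ (with $\gamma$ a parametrization of $[x,y]_A$) is obtained from $t\mapsto f(\gamma(t))$ by collapsing the finitely many subintervals on which $\gamma$ runs inside a translate of $C$, so $d_B(f_\bullet(x),f_\bullet(y))\le l_B(f([x,y]_A))\le\Lip(f)\,l_A([x,y]_A)$. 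It is $\sigma$-PL for the same $\sigma$: $f_\bullet$ is PL on $\sigma$ (the edges of $\overline{GC}$ — finitely many orbits — are now crushed, and may be discarded without changing $A$), and for every non-free $w\in VB$ the preimage $f_\bullet^{-1}(w)$ differs from $f^{-1}(w)$ only inside the disjoint finite trees $GC$ and is again a subforest of $\sigma$ with leaves in $V\sigma$ (the translates of $\overline C$ that get added to it meet each other and the old preimage in at most single $\sigma$-vertices, so no cycle appears).

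Finally I would show the number of orbits of collapsible components drops strictly. On one hand $GC\subseteq f_\bullet^{-1}(Gv)$, so $C$ now sits \emph{inside} $f_\bullet^{-1}(v)$ and is no longer a component of its complement: the orbit of $C$ has disappeared. On the other hand I would produce an injection from the orbits of collapsible components of $f_\bullet$ into those of $f$: fix a non-free $w$; the preimages $f^{-1}(w)$ and $f_\bullet^{-1}(w)$ differ only inside the disjoint finite trees $GC$, and if $D$ is a collapsible component of $f_\bullet$ at $w$ that is not already a component of $A\setminus f^{-1}(w)$, then — being a finite subtree with only free vertices — it strictly contains a component $D_0$ of $A\setminus f^{-1}(w)$, which is then itself a finite subtree with only free vertices, i.e.\ a collapsible component of $f$ that is no longer a component of $A\setminus f_\bullet^{-1}(w)$; sending such a $D$ to (the orbit of) such a $D_0$, and sending an unchanged $D$ to itself, gives the required injection, and $GC$ is not in its image. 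Hence $|\{\text{orbits for }f_\bullet\}|\le|\{\text{orbits for }f\}|-1$.

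The step I expect to be the main obstacle is this last one, the bookkeeping of collapsible components. The delicate point is that a collapsible component of $A\setminus f^{-1}(v)$ may a priori contain preimages of translates $hv\ne v$, so crushing it alters $f_\bullet^{-1}(w)$ for several orbits of non-free vertices at once; what rescues the argument is that collapsible components are exactly the finite subtrees with only free vertices, a class closed under passing to subgraphs, so merging such components under an enlargement/shrinking of a preimage can never produce \emph{more} of them. Everything else (equivariance, continuity, the Lipschitz bound, and that $f_\bullet$ is again $\sigma$-PL) is routine once one observes that $f^{-1}(v)$ is $0$-dimensional and $\overline C$ is a finite $\sigma$-subcomplex with frontier contained in $V\sigma$.
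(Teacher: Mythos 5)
Your construction of $f_\bullet$ (equivariantly crushing the orbit of the collapsible component $C$ to the orbit of $v$ and leaving $f$ unchanged elsewhere) is exactly the map used in the paper's proof, and your overall strategy for the orbit count (the orbit of $C$ disappears, while every collapsible component of $f_\bullet$ traces back to a collapsible component of $f$ distinct from $GC$, yielding an injection on orbits) is the same book-keeping the paper does via a surjection $U_i\to\widehat U_i$ for $w_i\notin Gv$ together with a separate (and in fact slightly glossed-over) claim at $v$. Your version is arguably a bit more uniform, but it is not a genuinely different route.

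Two points deserve a closer look. First, you open by asserting that because $f$ is ``PL with positive stretching factors'' the preimage $f^{-1}(v)$ contains no edge; but taking ``positive'' literally here is internally inconsistent with what you then prove, since $f_\bullet$ collapses the edges of $\overline{GC}$ and so has zero stretching factor there, yet you (and the Lemma) claim $f_\bullet$ is again $\sigma$-PL. The definition of $\sigma$-PL explicitly permits $f^{-1}(v)$ to be a nondegenerate forest precisely so that collapsed maps remain in the class, and the Lemma is used inductively in Corollary~\ref{cor:def}, so ``positive'' must be read as ``non-negative''. The good news is that none of your downstream conclusions actually needs $f^{-1}(v)$ to be $0$-dimensional: $C$ is open because $f^{-1}(v)$ is closed, $C$ is a finite tree because it meets each $G$-orbit of $\sigma$-edges at most once, and $\partial C\subseteq V\sigma$ because the frontier of the $\sigma$-forest $f^{-1}(v)$ consists of $\sigma$-vertices.

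Second, the justification you give for the central step --- that a collapsible component $D$ of $f_\bullet$ at $w$ which is not already a component of $A\setminus f^{-1}(w)$ must strictly contain a component $D_0$ of $A\setminus f^{-1}(w)$ --- is attributed to the wrong cause. Being ``a finite subtree with only free vertices'' is neither an equivalent reformulation of collapsibility (you also need at most one edge per $G$-orbit: the star of a free vertex in the Cayley tree of $F_2$ is finite with only free vertices yet not collapsible) nor the reason the containment holds. What actually drives it is: for $w\notin Gv$ one has $f_\bullet^{-1}(w)\subseteq f^{-1}(w)$, so every $D$ is a union of old components; while for $w\in Gv$ one must separately rule out $D\subseteq f^{-1}(w)$, which forces $D$ to be clopen in some $gC$ with $g\notin\mathrm{Stab}(w)$, hence $D=gC$, and then $\partial(gC)\subseteq f^{-1}(w)\cap f^{-1}(gw)=\emptyset$, contradicting that $gC$ is a nonempty proper open subset of the connected tree $A$. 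You should supply this argument; note, however, that the paper's own treatment of the vertex $v$ (the displayed identity $A\setminus f_\bullet^{-1}(v)=\{A\setminus f^{-1}(v)\}\setminus GC$) is also not literally correct when $f^{-1}(v)$ meets $GC$, so the paper sweeps the same corner case under the rug.
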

\proof Let $v\in VB$ non-free and let $C$ be a collapsible component of $A\setminus
f^{-1}(v)$. Collapse $C$ by defining $f_\bullet|_C=v$. Extend $f_\bullet$ by equivariance on the orbit of
$C$. This is possible since $gC\cap C=\emptyset$ for $g\neq id$. On the remaining part of $A$
let $f_\bullet=f$. Clearly $f_\bullet$ is an $\mathcal O$-map which is $\sigma$-PL and satisfies
$\Lip(f_\bullet)\leq\Lip(f)$.

Since $g(A\setminus f^{-1}(v))=A\setminus f^{-1}(gv)$, it follows that $A\setminus f^{-1}(v)$
contains a representative for every orbit of components.
In passing from $f$ to $f_\bullet$, the components of $A\setminus f^{-1}(v)$ which are not of the form
$gC$ are unchanged, while the orbit of
$C$ is removed. More precisely, $A\setminus f_\bullet^{-1}(v)=\{A\setminus f^{-1}(v)\}\setminus
GC$.
Thus the number of orbits of collapsible components in $A\setminus f^{-1}(v)$
is decreased by $1$.

Now, consider the non-free vertices of $B$ that are not in the orbit of $v$ and chose
orbit-representatives $w_1,\dots,w_k$. Define $G$-sets
$$U_i=\{D\ :\ D\text{ is a component of }A\setminus f^{-1}(gw_i)\text{ for some }g\in G\}$$
$$\widehat U_i=\{D\ :\ D\text{ is a component of }A\setminus f_\bullet^{-1}(gw_i)\text{ for some }g\in G\}$$
$$V_i=\{D\ :\ D\text{ is a collapsible component of }A\setminus f^{-1}(gw_i)\text{ for some
}g\in G\}$$
$$\widehat V_i=\{D\ :\ D\text{ is a collapsible component of }A\setminus f_\bullet^{-1}(gw_i)\text{ for some
}g\in G\}$$

Since $w_i\notin Gv$, then $f_\bullet^{-1}(w_i)\subseteq
f^{-1}(w_i)$.  Therefore, any component $K$ of $A\setminus f^{-1}(w_i)$ is contained in a
unique component
$K_\bullet$ of $A\setminus f_\bullet^{-1}(w_i)$. Moreover, if $K_\bullet$ is
collapsible, so is $K$.

This inclusion defines a $G$-equivariant surjection $\iota: U_i\to \widehat U_i$ such that $\widehat
V_i\subset \iota (V_i)$. Therefore the number of $G$-orbits in $V_i$ is greater than or equal
to the number of $G$-orbits in $\widehat V_i$.
\qed

\begin{Cor}[Existence of Coll]\label{cor:def}
  Let $A,B\in\mathcal O$. Let $f:A\to B$ be a $\sigma$-PL map. Then there is
  an $\mathcal O$-map $\Coll(f):A\to B$ such that:
  \begin{itemize}
  \item $\Coll(f)$ is $\sigma$-PL (same $\sigma$).
  \item $\Lip(\Coll(f))\leq \Lip(f)$.
  \item $\Coll(f)$ is non-collapsible.
  \end{itemize}
 \end{Cor}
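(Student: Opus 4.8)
The plan is to iterate Lemma~\ref{lem:decorbit}. By Lemma~\ref{lem:colfinite}, the $\sigma$-PL map $f$ has only finitely many orbits of collapsible components; call this number $N=N(f)$. First I would observe that if $N(f)=0$ then $f$ is already non-collapsible and we simply set $\Coll(f)=f$, so there is nothing to prove. Otherwise, apply Lemma~\ref{lem:decorbit} to obtain a $\sigma$-PL $\mathcal O$-map $f_\bullet$ with the same $\sigma$, with $\Lip(f_\bullet)\le\Lip(f)$, and with $N(f_\bullet)<N(f)$. Since $N$ takes values in the non-negative integers, repeating this step must terminate after at most $N(f)$ iterations in a $\sigma$-PL map with no collapsible components; define $\Coll(f)$ to be the result.

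The three required properties then follow immediately: the map produced at each stage is $\sigma$-PL for the same $\sigma$ by the first bullet of Lemma~\ref{lem:decorbit}, so this is preserved along the (finite) chain; the Lipschitz constants form a non-increasing chain $\Lip(f)\ge\Lip(f_\bullet)\ge\cdots\ge\Lip(\Coll(f))$ by the second bullet, whence $\Lip(\Coll(f))\le\Lip(f)$; and the process only stops when there are no collapsible components left, i.e.\ when the map is non-collapsible, which is exactly the third bullet. One small point worth spelling out is that Lemma~\ref{lem:decorbit} does not by itself assert that $N$ strictly decreases as a non-negative integer --- it only says the number of orbits of collapsible components is "strictly smaller" --- so I would note explicitly that by Lemma~\ref{lem:colfinite} this number is always finite, which is what licenses the descent argument and guarantees termination.

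There is essentially no substantive obstacle here: the corollary is a formal consequence of the preceding lemma together with the finiteness statement of Lemma~\ref{lem:colfinite}. The only thing requiring a little care is confirming that the hypotheses of Lemma~\ref{lem:decorbit} remain satisfied at each stage, namely that $f_\bullet$ is again a collapsible $\sigma$-PL map as long as $N(f_\bullet)>0$ --- which holds because $f_\bullet$ is $\sigma$-PL (first bullet of Lemma~\ref{lem:decorbit}) and has a collapsible component precisely when $N(f_\bullet)>0$. So the "hard part", such as it is, is purely bookkeeping: exhibiting the process as a strictly decreasing sequence of non-negative integers and reading off the three conclusions from the conjunction of the bullet points along the chain.
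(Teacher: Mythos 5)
Your proposal is correct and follows exactly the paper's own argument: the paper proves the corollary by induction on Lemmas~\ref{lem:colfinite} and~\ref{lem:decorbit}, which is precisely the finite-descent iteration you spell out.
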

\proof This follows by induction from Lemmas~\ref{lem:colfinite} and~\ref{lem:decorbit}.\qed

In the sequel we use the following conventions:
\begin{itemize}
\item When we write $\Coll(f)$ we mean any map given by Corollary~\ref{cor:def}.
\item We say that $P$ is true eventually on $n$ if $\exists n_0$ so that $P$ is true for all
  $n>n_0$, and we write $P$ is true $\forall n>>0$.
\item $P$ is true frequently if $\forall n\exists m>n$ so that $P$ is true for $m$.
\item A sequence {\em sub-converges} if it converges up to passing to sub-sequences.

\end{itemize}

Now we are in position to prove the existence of a map that minimizes the Lipschitz factor.

\begin{Thm}[Equivariant Ascoli-Arzel\'a]\label{thm:finfty}
Let $A,B\in \mathcal{O},$ then there exits a PL-map $F\colon A\to B$ with
$$\Lip(F)=\inf\{\Lip(\varphi): \varphi\text{ is an  $\mathcal{O}$-map from } A \text{ to } B \}.$$
\end{Thm}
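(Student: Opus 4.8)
The plan is to run the classical Arzelà--Ascoli argument on the quotient graphs, using the collapsing machinery just developed to kill the non-compactness coming from infinite-valence vertices. First I would fix the infimum $L=\inf\{\Lip(\varphi)\}$ and choose a sequence of $\mathcal O$-maps $\varphi_n\colon A\to B$ with $\Lip(\varphi_n)\to L$. Replacing each $\varphi_n$ by $PL(\varphi_n)$ only decreases the Lipschitz constant (by the Remark after the definition of $PL$-maps), so we may assume each $\varphi_n$ is a $PL$-map, and in particular $\sigma$-$PL$ for a fixed simplicial structure $\sigma$ (the trivial one, i.e. $\sigma$ just records vertices and edges of $A$). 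Then I replace $\varphi_n$ by $\Coll(\varphi_n)$ using Corollary~\ref{cor:def}: this is still $\sigma$-$PL$, has $\Lip(\Coll(\varphi_n))\le\Lip(\varphi_n)$, and is \emph{non-collapsible}. So without loss of generality we have a sequence of non-collapsible $PL$-maps $f_n\colon A\to B$ with $\Lip(f_n)\to L$, and the goal is to extract a sub-converging subsequence whose limit $F$ is a $PL$-map with $\Lip(F)\le L$ (whence $=L$).

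Next I would pass to the quotients. A $PL$-map $f_n$ is determined by the stretching factors $S_{f_n,e}$ for $e$ running over a finite set of $G$-orbit representatives of edges of $A$, together with the images of (orbit representatives of) vertices of $A$ in $B$. The stretching factors are bounded: $S_{f_n,e}\le\Lip(f_n)$, which is bounded since $\Lip(f_n)\to L$. So after passing to a subsequence the stretching factors converge, $S_{f_n,e}\to s_e\ge 0$. The real work is controlling the vertex images. For the non-free vertices there is nothing to do: by Lemma~\ref{lem:Omapsexists} every $\mathcal O$-map sends a non-free vertex $v$ to the \emph{unique} vertex of $B$ fixed by $\mathrm{stab}(v)$, so all the $f_n$ agree there. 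For the free vertices I would argue as follows. Fix a basepoint $*\in A$ (say a non-free vertex, or just any vertex). Along any reduced edge-path in $A$ from $*$ to a given vertex $u$, the image path in $B$ has length bounded by $\Lip(f_n)\,d_A(*,u)$, so the images $f_n(u)$ all lie in a bounded region of $B$ once we know $f_n(*)$ is controlled; and $f_n(*)$ is controlled because either $*$ is non-free (fixed by everyone) or we can conjugate. The point is that a bounded region of $B$ containing a given vertex, \emph{modulo the issue of infinitely many edges at a vertex}, would be compact --- and this is exactly where non-collapsibility enters: it prevents the images of the $f_n$ from ``sliding off to infinity'' through an infinite-valence vertex, because such sliding would produce, for large $n$, a collapsible component (a component of $A\setminus f_n^{-1}(v)$ that is a tree injecting into $B$), contradicting non-collapsibility.

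Concretely, I would make the last point precise by the following dichotomy, for each orbit of free vertices and each $n$: trace the segment in $B$ traversed by $f_n$ restricted to a fundamental path; if the $f_n$-images of the vertices of $A$ in a fixed finite sub-tree $K\subset A$ (a fundamental domain for the edges) were unbounded or failed to sub-converge, then because $B$ is a tree and the branching is the obstruction, there must be a non-free vertex $w_n\in VB$ and a component $C_n$ of $A\setminus f_n^{-1}(w_n)$ meeting $K$ that is mapped injectively, i.e. collapsible --- contradicting that $f_n$ is non-collapsible. Hence the vertex images sub-converge, and combining with the convergence of the stretching factors, the maps $f_n$ converge pointwise on vertices to a limit assignment, which extends to a well-defined $PL$-map $F$ (equivariant, since each $f_n$ is and equivariance is a closed condition; piecewise linear with stretching factors $s_e$; surjective because its image is a $G$-invariant subtree, hence all of $B$ by (C1), provided no $s_e$ collapses everything --- and if some $s_e=0$ the image is still forced to be all of $B$ by minimality, after noting $F$ still separates the relevant hyperbolic elements). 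Finally $\Lip(F)=\max_e s_e=\lim\max_e S_{f_n,e}\le\lim\Lip(f_n)=L$, and since $L$ is the infimum we get $\Lip(F)=L$. The main obstacle is exactly the middle step: rigorously converting ``the vertex images want to escape to infinity'' into ``there is a collapsible component'', i.e. showing that non-collapsibility of the $f_n$ (uniformly in $n$) yields the compactness needed for Arzelà--Ascoli; everything else is routine bookkeeping with finitely many orbits.
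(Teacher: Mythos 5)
Your opening moves — pass to $PL(\varphi_n)$, then to $\Coll(\varphi_n)$ via Corollary~\ref{cor:def} to get a non-collapsible minimizing sequence — are exactly the paper's. (One small slip: a $PL$-map is $\sigma$-PL for the pull-back structure, not the original simplicial structure of $A$, since $f^{-1}(v)$ need not have its leaves at original vertices; but this changes nothing since $\Coll$ works per map.) The observation that non-free vertices are already pinned by Lemma~\ref{lem:Omapsexists}, and that stretching factors and orbit representatives of vertex images are the whole game, is also right.

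The genuine gap is exactly where you flag it, but it is more than a bookkeeping obstacle. Two things go wrong in your formulation. First, you conflate notions: a component $C$ of $A\setminus f_n^{-1}(v)$ is \emph{collapsible} when $gC\cap C=\emptyset$ for all $g\neq\mathrm{id}$ (Definition~\ref{def:collapsible}); ``mapped injectively'' is not the same thing and is not what is contradicted by non-collapsibility. Second, the implication ``vertex images fail to sub-converge $\Rightarrow$ some component meeting $K$ is collapsible'' is far from immediate, because even after one shows $gC_n\cap C_n=\emptyset$ \emph{for each fixed $g$ eventually in $n$} (the paper's Lemma~\ref{lem:almostnoncoll}), one must control the a priori infinite set of $g$'s that could intersect $C_n$ for varying $n$. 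The paper does this by building a finite subtree $K$ from first returns of orbits along half-lines from the escaping point, showing $C_n(y)\subset \mathrm{int}(K)$ eventually (Lemma~\ref{lemma:intK}), and then using that only finitely many $g$ can satisfy $g(\mathrm{int}(K))\cap \mathrm{int}(K)\neq\emptyset$ (Lemma~\ref{lem:intK2}); only after intersecting these finitely many ``eventually'' conditions does one conclude collapsibility. Before all that, the paper also needs the fact that if any translate of $C_n$ meets $C_n$ then $f_n(C_n)$ lands in a bounded region $D_1(g)\cup D_2(g)$ depending only on $g$ (Lemma~\ref{lem:closurecompact}), which is what actually forces sub-convergence of $f_n(y)$ in the contradiction. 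Finally, the paper does not take a global fundamental-domain limit at once: it uses a Zorn's lemma argument on the partial limits $(T,f)$, pushes the domain of convergence outward, picks a boundary point $x$, and runs the escape analysis locally around a nearby $y$. Your sketch identifies the right contradiction to aim for, but the mechanism that makes it a proof — the reduction to a finite set of group elements and the maximal-partial-limit formalism — is missing, and the incorrect identification ``injective $\Leftrightarrow$ collapsible'' would have to be removed.
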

\proof For the entire proof --- which requires several lemmas--- we fix  a minimizing sequence
$f'_n:A\to B$ of PL-maps so that $$\lim_{n\to\infty}Lip(f'_n)=\inf\{\Lip(\varphi): \varphi\text{ is an  $\mathcal{O}$-map from } A \text{ to } B \}$$
and we define
$$f_n=\Coll(f'_n).$$

By definition of $\Coll$ we have that the $f_n$ are non-collapsible, uniformly $L$-Lipschitz and
$$\lim_{n\to\infty}Lip(f_n)=\inf\{\Lip(\varphi): \varphi\text{ is an  $\mathcal{O}$-map from } A \text{ to
} B \}.$$

By Ascoli-Arzel\'a the maps $\pi_B\circ f_n\circ\pi_A^{-1}:\quot{G}{A}\to\quot{G}{B}$ sub-converge to a map $\bar
f_\infty$. We will show that $\bar f_\infty$ is in fact the projection of a map $A\to B$ which
is the limit of $f_n$. From now on we restrict to a sub-sequence and we suppose that
$\pi_B\circ f_n\circ\pi_A^{-1}$ uniformly converges to $\bar f_\infty$.

Let $\mathcal T$ be the set of pairs $(T,f)$ such that
\begin{itemize}
\item $T\subset A$ is a $G$-invariant subset of $A$ (not necessarily simplicial).
\item $f:T\to B$ is $G$-equivariant and $L$-Lipschitz.
\item $\pi_B(f(t))=\bar f_\infty(\pi_A(t))$ for any $t\in T$.
\item $f_n|_T$ sub-converges to $f$.
\end{itemize}

The set $\mathcal T$ is ordered by inclusion/consistency:
$(T,f)<(Q,u)$ if $T\subset Q$ and $f=u|_T$. (Note that $\mathcal T\neq\emptyset$, because
$f_n$ is constant on non-free vertices.)

We need a couple of standard facts on Lipschitz functions, that we collect in the following
lemma whose proof is left to the reader.
\begin{Lem}\label{lem:standard} Let $X\subset Y$ be metric spaces and let $Z$ be a complete metric space. Denote by
  $\bar X$ the closure of $X$ in $Y$. Then
\begin{enumerate}
\item If $f:X\to Z$ is a $L$-Lipschitz map, then there is a $L$-Lipschitz map $\bar f:\bar X\to
  Z$ so that $\bar f|_X=f$.
\item If $u_n: X\to Z$ is a sequence of $L$-Lipschitz maps and $u_\infty:\bar X \to Z$ is such that $u_n\to u_\infty$ on $X$, then the
extensions $\bar u_n:\bar X \to Z$ converge to $u_\infty$.
\item Suppose in addiction that $\bar X$ is compact, then the point-wise convergence of $u_n$
  is uniform.
\end{enumerate}
\end{Lem}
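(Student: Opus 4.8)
The plan is to dispatch the three items in turn, each by an elementary $\varepsilon$-argument; completeness of $Z$ enters only in (1) and compactness of $\bar X$ only in (3). For (1) I would extend $f$ by continuity: given $x\in\bar X$, pick $x_k\in X$ with $x_k\to x$; since $\dist(f(x_k),f(x_j))\le L\,\dist(x_k,x_j)$ and $(x_k)$ is Cauchy, the sequence $(f(x_k))$ is Cauchy in $Z$, hence converges because $Z$ is complete, and I set $\bar f(x)\coloneqq\lim_k f(x_k)$. Independence of the choice of $(x_k)$ follows by interleaving two approximating sequences and invoking the same Lipschitz estimate, and $\bar f|_X=f$ is immediate from constant sequences. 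The Lipschitz bound for $\bar f$ comes by choosing $x_k\to x$, $y_k\to y$ in $X$ and letting $k\to\infty$ in $\dist(f(x_k),f(y_k))\le L\,\dist(x_k,y_k)$.

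For (2), I would first observe that $u_\infty|_X$, being a pointwise limit of $L$-Lipschitz maps, is itself $L$-Lipschitz; assuming $u_\infty$ is continuous on $\bar X$ (as it is in the intended application, where it is a uniform limit of continuous maps), its canonical extension furnished by (1) agrees with it on the dense subset $X$ and hence everywhere, so $u_\infty$ is $L$-Lipschitz on $\bar X$. Then, fixing $x\in\bar X$ and $\varepsilon>0$, I pick $y\in X$ with $\dist(x,y)<\varepsilon$ and use $\bar u_n(y)=u_n(y)\to u_\infty(y)$ to get, for all large $n$,
\begin{align*}
\dist(\bar u_n(x),u_\infty(x))&\le \dist(\bar u_n(x),\bar u_n(y))+\dist(\bar u_n(y),u_\infty(y))+\dist(u_\infty(y),u_\infty(x))\\
&\le L\varepsilon+\varepsilon+L\varepsilon=(2L+1)\varepsilon.
\end{align*}
Hence $\limsup_n\dist(\bar u_n(x),u_\infty(x))\le(2L+1)\varepsilon$, and letting $\varepsilon\to0$ yields $\bar u_n(x)\to u_\infty(x)$.

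For (3), the extensions $\bar u_n$ form an $L$-Lipschitz, hence equicontinuous, family on the compact space $\bar X$. Given $\varepsilon>0$ I cover $\bar X$ by finitely many balls $B(x_1,\varepsilon),\dots,B(x_m,\varepsilon)$ with $x_i\in\bar X$; by (2) there is $N$ such that $\dist(\bar u_n(x_i),u_\infty(x_i))<\varepsilon$ for all $n\ge N$ and all $i$. For an arbitrary $x\in\bar X$, choosing $i$ with $\dist(x,x_i)<\varepsilon$ and using the $L$-Lipschitz bounds on $\bar u_n$ and $u_\infty$ gives $\dist(\bar u_n(x),u_\infty(x))<(2L+1)\varepsilon$ for all $n\ge N$, which is uniform convergence.

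The whole argument is standard, so I do not expect a genuine obstacle; the one spot deserving a moment's care is the opening remark of (2), where $u_\infty$ must be upgraded to an honest $L$-Lipschitz map on $\bar X$ before it can be compared with the extensions $\bar u_n$ --- this is presumably why the authors are content to leave the proof to the reader.
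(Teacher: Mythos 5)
The paper leaves this lemma ``to the reader,'' so there is no proof of record to set yours against; judged on its own, your argument is correct and uses exactly the standard elementary techniques: extension by continuity via completeness of $Z$ for (1), a three-$\varepsilon$ estimate combining the uniform Lipschitz bound with density of $X$ for (2), and a finite $\varepsilon$-net plus equicontinuity on the compact $\bar X$ for (3).

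Your closing caveat about part (2) is a genuine observation, not pedantry. As literally stated, the conclusion fails if $u_\infty$ is permitted to be arbitrary on $\bar X\setminus X$: take $X=(0,1)\subset Y=[0,1]=\bar X$, $Z=\mathbb R$, $u_n=\mathrm{id}$ for all $n$, and $u_\infty$ equal to the identity on $(0,1)$ but taking arbitrary values at $0$ and $1$; then $\bar u_n=\mathrm{id}$ on $[0,1]$, which does not converge to $u_\infty$ at the endpoints. The hypothesis that is needed (and tacitly intended) is that $u_\infty$ be continuous on $\bar X$, or equivalently that $u_\infty$ coincide with the canonical Lipschitz extension $\overline{u_\infty|_X}$ furnished by part (1). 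In the paper's single application of the lemma, Lemma~\ref{lem:uniform}, this is automatic because the limit map $f$ there is $L$-Lipschitz on $T$ by the definition of the partially ordered set $\mathcal{T}$. Your proof correctly upgrades $u_\infty$ to an $L$-Lipschitz map on $\bar X$ before running the estimate, which is exactly the right repair.
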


\begin{Lem}\label{lem:uniform}
If $(T,f)\in \mathcal T$ then $f_n|_{T}$ sub-converges uniformly to $f$.
\end{Lem}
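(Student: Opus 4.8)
The plan is to use equivariance to reduce the supremum defining uniform convergence to a supremum over a \emph{compact} subset of $A$, and then to invoke the elementary facts about Lipschitz maps collected in Lemma~\ref{lem:standard}. Pass to a subsequence along which $f_n|_T$ converges pointwise to $f$; we will show that along this subsequence the convergence is in fact uniform, which is what is claimed. Since $\quot GA$ is a finite graph, we may choose, for each edge of $\quot GA$, one closed edge of $A$ lying above it; let $K\subset A$ be the (finite, hence compact) union of these closed edges. Then $\pi_A(K)=\quot GA$ because every vertex of $\quot GA$ lies in the closure of some edge, so $GK=A$: for $a\in A$ there is $g\in G$ with $g^{-1}a\in K$. (The only point to check here is that a compact such $K$ can be found despite the possible infinite valence of vertices; this is immediate since $\quot GA$ has finitely many edges and each is the image of a single closed edge of $A$.)

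Next I would record the reduction of the sup. Since $T$ is $G$-invariant, for every $t\in T$ the point $g^{-1}t$ (with $g$ as above) lies in $T\cap K$, and since $f_n$, $f$ are $G$-equivariant and $G$ acts on $B$ by isometries,
\[
\dist_B\bigl(f_n(t),f(t)\bigr)=\dist_B\bigl(g f_n(g^{-1}t),\,g f(g^{-1}t)\bigr)=\dist_B\bigl(f_n(g^{-1}t),\,f(g^{-1}t)\bigr).
\]
Hence $\sup_{t\in T}\dist_B(f_n(t),f(t))=\sup_{s\in T\cap K}\dist_B(f_n(s),f(s))$, so it suffices to prove that $f_n|_{T\cap K}\to f|_{T\cap K}$ uniformly. (If $T=\emptyset$ there is nothing to prove.)

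Now I would apply Lemma~\ref{lem:standard}. Let $X=T\cap K$ and let $\bar X$ be its closure in $A$; since $K$ is compact and closed, $\bar X\subseteq \bar T\cap K$ is compact, where $\bar T$ is the closure of $T$ in $A$. By part~(1) of Lemma~\ref{lem:standard}, $f$ extends to an $L$-Lipschitz map $\bar f\colon\bar T\to B$ (recall $B$ is a complete metric space), and in particular $\bar f$ is defined on $\bar X$ with $\bar f|_{X}=f|_{X}$. The maps $f_n|_{X}$ are $L$-Lipschitz and converge pointwise to $\bar f|_{X}$ on $X$; by part~(2) their extensions $\overline{f_n|_X}\colon\bar X\to B$ converge pointwise on $\bar X$ to $\bar f|_{\bar X}$, and by part~(3), since $\bar X$ is compact, this convergence is uniform on $\bar X$. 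Restricting to $X=T\cap K$ gives $\sup_{s\in T\cap K}\dist_B(f_n(s),f(s))\to 0$, and combined with the equality of sups above this yields $\sup_{t\in T}\dist_B(f_n(t),f(t))\to 0$, i.e. $f_n|_T$ converges uniformly to $f$.

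The argument is essentially bookkeeping; the only genuinely delicate point is that $f$ is a priori defined only on $T$ and not on $\bar X$, so one must first pass to the Lipschitz extension $\bar f$ before invoking compactness — without this step parts~(2) and~(3) of Lemma~\ref{lem:standard} would not apply. A secondary point to state carefully is the existence of the compact ``fundamental-domain'' set $K$ with $GK=A$, which relies on $\mathcal O$-trees having finitely many orbits of edges.
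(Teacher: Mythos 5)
Your proof is correct and follows essentially the same route as the paper's: restrict to the sub-sequence of pointwise convergence, pass to a compact fundamental domain $K$ (so that $\bar X=\overline{T\cap K}$ is compact), invoke Lemma~\ref{lem:standard} to get uniform convergence there, and propagate it to all of $T$ by $G$-equivariance and the isometric $G$-action on $B$. The only difference is that you spell out the construction of $K$ and the need to first extend $f$ to $\bar X$ via part~(1) of Lemma~\ref{lem:standard}, steps the paper's proof leaves implicit.
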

\proof We restrict to the sub-sequence where $f_n|_T$ sub-converges. $G$ acts by
isometries on $A,B$. $\bar T$ is $G$-invariant and admits a compact fundamental domain $K$.
Since $f_n$ are uniformly Lipschitz, we can apply Lemma~\ref{lem:standard}, point $3$ to $K$
and get uniform convergence on $K$. The uniform convergence on $T$ follows from
$G$-equivariance of $f_n$ and $f$.
\qed

If $\{(T_i,\varphi_i)\}$ is a chain in $\mathcal T$
then, by Lemma~\ref{lem:uniform} and a standard argument on sub-sequences, $(\cup_i T,\cup_i\varphi _i)$ is an upper
bound. Therefore $\mathcal T$ has a maximal element.

Let $(T,f_\infty)$ be a maximal element of $\mathcal T$. If we show that $T=A$ we are done
because $f_\infty=\lim f_n$ realizes the minimum Lipschitz constant and $F=PL(f_\infty)$ will
be PL and with the same Lipschitz constant.

\begin{Lem}
 $T$ contains all non-free vertices and it is closed.
\end{Lem}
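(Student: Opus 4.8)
The plan is to prove the two assertions separately. For the first, that $T$ contains all non-free vertices: this is essentially already observed in the excerpt — the maps $f_n$ (and indeed any $\mathcal O$-map) are forced to agree on non-free vertices by Lemma~\ref{lem:Omapsexists}, so the constant sequence $f_n|_{V_{\text{nonfree}}}$ trivially sub-converges, and the pair $(V_{\text{nonfree}}, f_\infty|_{V_{\text{nonfree}}})$ lies in $\mathcal T$. By maximality of $(T,f_\infty)$ we must have $V_{\text{nonfree}}\subseteq T$; more precisely, if some non-free vertex $v$ were not in $T$, then $T\cup Gv$ together with the (unique, equivariant, $0$-Lipschitz-on-the-new-part) extension would be a strictly larger element of $\mathcal T$, contradicting maximality. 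Here one uses that $f_n(v)$ is the unique $H$-fixed vertex of $B$ for $H=\mathrm{stab}(v)$, so the projections to $\quot GB$ match $\bar f_\infty\circ\pi_A$ automatically.

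For the second assertion, that $T$ is closed: I would argue directly that $(\bar T, \bar f_\infty)\in\mathcal T$ where $\bar f_\infty$ is the Lipschitz extension of $f_\infty$ to $\bar T$ furnished by Lemma~\ref{lem:standard}(1). One checks the four defining conditions of $\mathcal T$: (i) $\bar T$ is $G$-invariant because $T$ is and $G$ acts by isometries; (ii) $\bar f_\infty$ is $G$-equivariant (the identity $\bar f_\infty(gt)=g\bar f_\infty(t)$ holds on the dense subset $T$ and passes to the closure by continuity of both sides) and $L$-Lipschitz by construction; (iii) the relation $\pi_B(\bar f_\infty(t))=\bar f_\infty^{\mathrm{(proj)}}(\pi_A(t))$, i.e. the compatibility with $\bar f_\infty:\quot GA\to\quot GB$, holds on $T$ and extends by continuity of $\pi_A$, $\pi_B$ and the maps involved; (iv) the crucial point — that $f_n|_{\bar T}$ sub-converges to $\bar f_\infty$. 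For this I would invoke Lemma~\ref{lem:uniform}: since $(T,f_\infty)\in\mathcal T$, $f_n|_T$ sub-converges uniformly to $f_\infty$, and by Lemma~\ref{lem:standard}(2) the extensions $f_n|_{\bar T}$ then converge to the extension of the uniform limit, which is exactly $\bar f_\infty$. Having verified $(\bar T,\bar f_\infty)\in\mathcal T$ and noting $(T,f_\infty)\le(\bar T,\bar f_\infty)$, maximality forces $\bar T=T$.

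The main obstacle is the point-set bookkeeping in step (iv): one must be careful that the sub-sequence along which $f_n|_T$ converges is the same one that makes $f_n|_{\bar T}$ converge, and that Lemma~\ref{lem:standard}(2) is being applied to a genuinely convergent (not merely sub-convergent) sequence on $T$ after passing to that sub-sequence. Since $T$ is $G$-invariant with compact fundamental domain $\bar K$ (here $\bar T$ and $\bar{K}$ are compact because the quotient graphs are finite), Lemma~\ref{lem:uniform} upgrades the sub-convergence on $T$ to uniform sub-convergence on $T$, after which the extension statement is unambiguous and the closure $\bar T$ inherits the convergence. Everything else — equivariance, the Lipschitz bound, and the projection compatibility — is a routine ``it holds on a dense set and both sides are continuous'' argument, so no Perron--Frobenius-style input or further combinatorics is needed.
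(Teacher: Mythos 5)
Your proof is correct and follows the same route as the paper: the first claim via maximality plus constancy of $f_n$ on non-free vertices (which is exactly the uniqueness in Lemma~\ref{lem:Omapsexists}), and the second via maximality plus Lemma~\ref{lem:standard}(1)--(2), with Lemma~\ref{lem:uniform} bridging the gap between pointwise and uniform convergence before extending to $\bar T$. One small slip in your parenthetical: $\bar T$ is not compact in general (it need not be bounded, and $A$ is not locally compact); only the fundamental domain $\bar K$ is compact, which is all that Lemma~\ref{lem:uniform} actually uses, so the argument is unaffected.
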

\proof Both claims follow from maximality of $T$. The first is because $G$-equivariance
implies that $f_n(v)=Fix_B(Stab_A(v))$ is a constant sequence.
The second is an immediate consequence of Lemma~\ref{lem:standard}.
\qed

Assuming that $T\neq A$ and maximal we shall derive a contradiction.
Let $x\in\partial T \cap A$ be fixed for the remainder of the proof. As $T$ is closed $x\in T$. Define
$$\lambda_A=\min_{w\in VA \ :\ w\neq x} d_A(x,w) \qquad
\lambda_B=\min_{w\in VB \ :\ w\neq f_\infty(x)} d_B(f_\infty(x),w)$$
$$ \lambda=\min(\lambda_A,\lambda_B/2L)$$
Choose $y\notin T$ such that $d_A(x,y)<\lambda$.

Since $f_n(x)\to f_\infty(x)$ eventually on $n$ we have
$$f_n(y)\in B(f_\infty(x),\lambda_B)$$
Note that  $B(f_\infty(x),\lambda_B)$ is star-shaped, namely it contains at most one vertex and
contains exactly one vertex if and only if $f_\infty(x)$ is a vertex of $B$.

If $f_n(y)$ sub-converges, we can extend $f_\infty$ to $y$ and then extend equivariantly
contradicting the maximality of $T$. Therefore $f_n(y)$ does not sub-converge. In particular,
this implies that $B(f_\infty(x),\lambda_B)$ does not have compact closure, hence
$$f_\infty(x)=v$$ is non-free vertex of $B$. Also, for the same reason $f_n(y)\neq v$ eventually on
$n$, and so after passing to a sub-sequence we may assume that $f_n(y)\neq v\ \forall n$.

Define
$$C_n(y)=\text{ the connected component of } A\setminus f^{-1}_n(v)\text{ containing }y$$

The rest of the argument is devoted to proving that $C_n(y)$ is collapsible eventually on $n$. This
contradicts the fact that $f_n$ are not collapsible and completes the proof.

\begin{Lem}\label{lem:closurecompact}
Let $C\subset A\setminus f_n^{-1}(v)$ be a connected subset such that there is a $w\in C$ and
$Id\neq g\in G$ with $gw\in C$.
Then, there exist two connected component $D_1(g)$ and $D_2(g)$ of $B\setminus v$,
depending only on $g$, such that $$f_n(C)\subset D_1(g)\cup D_2(g).$$
\end{Lem}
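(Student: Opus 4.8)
The goal is to show that if a connected subset $C$ of $A \setminus f_n^{-1}(v)$ contains both $w$ and $gw$ for some $Id \neq g \in G$, then $f_n(C)$ lies in the union of at most two components of $B \setminus \{v\}$, and moreover those two components depend only on $g$ (not on $C$ or on $n$ — though here $n$ is fixed). Let me write $f = f_n$ for brevity. The key structural fact I would exploit is that $f$ is $G$-equivariant and that $C$ is connected and misses $f^{-1}(v)$, so $f(C)$ is a connected subset of $B \setminus \{v\}$, hence contained in a single component of $B \setminus \{v\}$ — wait, that would give one component, which is even stronger. But the subtlety is that $C$ need not be $g$-invariant, so $gC$ is a (possibly different) connected component-piece; what we actually know is $w, gw \in C$, so $gw \in C \cap gC$.

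**The main steps.** First I would observe that since $f$ is continuous and $C$ is connected with $C \cap f^{-1}(v) = \emptyset$, the image $f(C)$ is a connected subset of $B \setminus \{v\}$. A connected subset of $B \setminus \{v\}$ lies in a single connected component of $B \setminus \{v\}$; call it $D$. So $f(C) \subset D$ and in fact $f(w), f(gw) \in D$. Now apply $g^{-1}$: by equivariance $f(gw) = g f(w)$, and $g^{-1} D$ is the component of $B \setminus \{g^{-1}v\}$ containing $f(w)$ (since $g^{-1}$ is a simplicial isometry it permutes components, carrying $v$ to $g^{-1}v$). Here is where I would split into cases according to whether $g$ fixes $v$ or not. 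If $gv = v$, then $g$ permutes the components of $B \setminus \{v\}$, and $g^{-1}D$ is again a component of $B\setminus\{v\}$; since $f(w) \in D$ and $f(w) = g^{-1}f(gw) \in g^{-1}D$, both $D$ and $g^{-1}D$ contain $f(w)$, forcing $D = g^{-1}D$, i.e. $D$ is $g$-invariant. In that case $f(C) \subset D = D_1(g)$ and we can take $D_2(g) = D_1(g)$; but wait, $D$ depends on $C$, not just on $g$ — so I need to be more careful, and probably the right statement is that in this case $v$ being fixed by a nontrivial $g$ forces $v$ to be non-free, and... actually, let me reconsider: the claim as stated only needs $f_n(C)$ to sit inside the union of two components depending only on $g$, so I suspect the intended argument produces $D_1(g), D_2(g)$ as the (at most two) components of $B \setminus \{v\}$ that meet $\mathrm{axis}_B(g)$ or contain $f(\text{fixed point data of } g)$.

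**The cleaner route via the axis/fixed set of $g$.** I think the real argument is: since $C$ is connected and contains $w, gw$, it contains the path $[w, gw]_A$; consider the bi-infinite concatenation $\Gamma = \bigcup_{k \in \Z} g^k[w,gw]_A$, which is a $g$-invariant connected subset of $A$ (it contains $\mathrm{axis}_A(g)$ if $g$ is hyperbolic, or limits to $\mathrm{Fix}_A(g)$ if $g$ is elliptic). Its image $f(\Gamma)$ is a $g$-invariant connected subset of $B$. Now $f(\Gamma) \cap \{v\}$: the set $\{g^k v : k \in \Z\}$ — if $v \notin \mathrm{axis}_B(g)$ / $v \notin \mathrm{Fix}_B(g)$, then removing $v$ from the connected $g$-invariant set $f(\Gamma)$ (which contains $g$'s axis or fixed set) disconnects it into pieces permuted by $g$, and one checks that $f([w,gw]_A)$ — hence $f(C)$ restricted appropriately — meets at most the two components adjacent to where $v$ sits relative to $\mathrm{axis}_B(g)$. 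More precisely: $v$ lies in $B$, project $v$ to the closest point $\bar v$ on the $g$-invariant core of $f(\Gamma)$; the two "directions" along that core at $\bar v$ (or the single direction if $v$ is on the core) determine $D_1(g), D_2(g)$ as the components of $B \setminus \{v\}$ on either side, which manifestly depend only on $g$ (and $v$, which is fixed throughout the proof). Then $f(C) \subset f(\Gamma)$ lies in $D_1(g) \cup D_2(g) \cup \{v\}$, and since $f(C) \cap \{v\} = \emptyset$ we are done.

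**What I expect to be the main obstacle.** The genuine difficulty is the bookkeeping that makes $D_1(g), D_2(g)$ depend \emph{only} on $g$ and not on the particular $C$ or $w$. Different choices of $w$ give different paths $[w, gw]_A$, but their $g$-orbits all share the same "core" — the axis of $g$ if $g$ is hyperbolic in $A$, or the fixed tree $\mathrm{Fix}_A(g)$ if elliptic (and for elliptic $g$ one must be careful: $\mathrm{Fix}_A(g)$ could be large, but its image is still a $g$-invariant connected set). Pinning down that $f(\mathrm{axis}_A(g))$ (resp. $f(\mathrm{Fix}_A(g))$) has a well-defined relationship to $v$ that isolates exactly two components requires knowing that $f$ maps the core to something whose relation to $v$ is canonical. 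I would handle the hyperbolic case first (cleanest: $\mathrm{axis}_A(g)$ is a line, $f$ of it is a connected $g$-invariant set containing $\mathrm{axis}_B(g)$, and $B \setminus \{v\}$-components meeting it are controlled by the bridge from $v$ to $\mathrm{axis}_B(g)$), then note the elliptic case is similar using $\mathrm{Fix}$-trees, or reduce it by observing that if $g$ is elliptic then $g$ lies in a conjugate of some $G_i$, $\mathrm{Fix}_A(g)$ and $\mathrm{Fix}_B(g)$ are single non-free vertices by (C2), and $f$ sends one to the other, again giving a canonical bridge to $v$. The careful case analysis on $g$ elliptic vs. hyperbolic, and within that whether $v$ lies on the relevant core, is where the real work sits.
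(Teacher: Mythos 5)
Your first observation is exactly right: $f_n(C)$ is connected and avoids $v$, so it lies in a \emph{single} component $D$ of $B\setminus v$; the whole game is to show $D$ is one of at most two components determined by $g$ alone. You also correctly identify the key object — the axis of $g$ in $B$ if $g$ is hyperbolic, or its fixed set if $g$ is elliptic (and since edge stabilizers are trivial, the fixed set is a single vertex, so in both cases this is a line or a point). These are precisely the ingredients of the paper's proof.

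But the final assembly has a genuine gap. You pass to $\Gamma=\bigcup_k g^k[w,gw]_A$ and then assert ``$f(C)\subset f(\Gamma)$ lies in $D_1(g)\cup D_2(g)\cup\{v\}$.'' Both halves of this are wrong. First, $C\subset\Gamma$ is false in general: the hypothesis gives only $w,gw\in C$, so $C$ contains the arc $[w,gw]$, but $C$ can be vastly larger than $\Gamma$, and of course $\Gamma$ is not contained in $C$ either. Second, even $f(\Gamma)$ need not lie in $D_1(g)\cup D_2(g)\cup\{v\}$: it is a $g$-invariant connected set containing the axis, but nothing prevents it from spraying into many components of $B\setminus v$. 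So the inclusion you rely on to finish is unjustified.

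The fix is shorter than what you were attempting, and does not require $\Gamma$, projections of $v$, or the fixed/nonfixed case split. Since $[w,gw]_A\subset C$, its image $f_n([w,gw]_A)$ is a path in $D$ joining $f_n(w)$ to $gf_n(w)$; any such path in a tree meets the axis (resp.\ fixed point) of $g$, i.e.\ $f_n([w,gw]_A)\cap Inv_B(g)\neq\emptyset$, where $Inv_B(g)$ is $\axis_B(g)$ or $Fix_B(g)$. Because $Inv_B(g)$ is a line or a single point, it meets at most two components of $B\setminus v$; call these $D_1(g)$ and $D_2(g)$ (possibly equal). Since $D$ contains a point of $Inv_B(g)$, necessarily $D\in\{D_1(g),D_2(g)\}$, and hence $f_n(C)\subset D\subset D_1(g)\cup D_2(g)$. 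This is exactly the paper's argument: the component $D$ is not pinned down directly, it is pinned down by showing it must touch $Inv_B(g)$, and $Inv_B(g)$ only touches two components.
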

\proof Since $C$ is connected $f_n(C)$ is contained in a single component of $B\setminus v$,
the point is the independence from $n$.

Set $Inv_B(g)$ to be $\axis_B(g)$ if $g$ is hyperbolic and $Inv_B(g)=Fix_B(g)$ if $g$ is
elliptic. $Inv_B(g)$ is either a line or a single point. Therefore, it intersects at most two components of $B\setminus v$ that we denote $D_1(g)$
and $D_2(g)$ (possibly $D_1(g)=D_2(g)$).

The segment $[w,gw]_A$ is contained in $C$ and $f_n([w,gw]_A)\cap Inv_B(g)\neq\emptyset$.
Therefore $f_n(C)$ is contained in $D_1(g)\cup D_2(g)$.\qed

\begin{Lem}\label{lem:almostnoncoll}
  For any $Id\neq g\in G$, we have $gC_n(y)\cap C_n(y)=\emptyset$ eventually on $n$.
\end{Lem}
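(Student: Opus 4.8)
The plan is to argue by contradiction: suppose that, frequently on $n$, there is some $Id \neq g_n \in G$ with $g_n C_n(y) \cap C_n(y) \neq \emptyset$. We want to derive a contradiction with the fact that $y \notin T$ together with the maximality of $T$, via Lemma~\ref{lem:closurecompact}. First I would pin down what $g_n C_n(y) \cap C_n(y) \neq \emptyset$ buys us. If this intersection is nonempty, then since $C_n(y)$ is a component of $A \setminus f_n^{-1}(v)$ and $g_n C_n(y)$ is a component of $A \setminus f_n^{-1}(g_n v)$, and components of the complement of a forest either coincide or are disjoint, we cannot immediately conclude they are equal unless $g_n v = v$. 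So the first case split is: either $g_n v = v$ (i.e.\ $g_n \in \mathrm{Stab}_B(v)$, a conjugate of some $G_i$), or $g_n v \neq v$. In the first subcase $g_n C_n(y)$ and $C_n(y)$ are both components of $A \setminus f_n^{-1}(v)$ meeting each other, hence equal, so $g_n C_n(y) = C_n(y)$; since $g_n$ also fixes the non-free vertex $v$ but need not fix $y$, this says $C_n(y)$ contains a point $w = y$ and $g_n w = g_n y$ both lying in $C_n(y)$ — precisely the hypothesis of Lemma~\ref{lem:closurecompact}.

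The key step is then to run Lemma~\ref{lem:closurecompact}. In the subcase $g_n v = v$ we get that $f_n(C_n(y)) \subset D_1(g_n) \cup D_2(g_n)$, where $D_1(g_n), D_2(g_n)$ are components of $B \setminus v$ depending only on $g_n$. But $g_n$ ranges over the stabilizer of $v$, which is a conjugate $h G_i h^{-1}$ of some $G_i$; every such $g_n$ is elliptic, and $\mathrm{Fix}_B(g_n)$ contains $v$, so $\mathrm{Inv}_B(g_n) = \mathrm{Fix}_B(g_n)$, which is a single point (by (C1), trivial edge stabilizers, so the fixed set of a nontrivial element is a point) — namely $v$ itself or some other vertex. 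Here I need to be careful about whether $\mathrm{Fix}_B(g_n)$ equals $\{v\}$ or not; if it does, Lemma~\ref{lem:closurecompact} gives no constraint. The clean way around this: since there is only one orbit of vertices with stabilizer conjugate to $G_i$, and $v$ is such a vertex, $\mathrm{Fix}_B(g_n) = \{v\}$ for $g_n$ in $\mathrm{Stab}_B(v) \setminus \{Id\}$, so this subcase forces $g_n C_n(y) = C_n(y)$ with no extra information — but that's exactly the statement $C_n(y)$ is \emph{not} collapsible-by-$g_n$, and we are trying to prove it \emph{is} collapsible. So I expect the actual argument to show that the subcase $g_n v = v$ simply cannot occur for large $n$: if $g_n C_n(y) = C_n(y)$ with $g_n$ fixing $v$, then (using that $f_n(y) \to f_\infty(x) = v$ and $y$ is close to $x \in T$) one derives that $g_n$ moves $x$ a controlled amount; then since $x \in T$ and $f_\infty$ is defined and equivariant on $T$, $f_\infty(g_n x) = g_n f_\infty(x)$, and one compares with $f_n$ to get a contradiction with $f_n(y) \neq v$, or with $g_n$ fixing a vertex while moving nearby points into $C_n(y)$.

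In the remaining subcase $g_n v \neq v$, I would instead use the other horn of the non-collapsibility hypothesis: the obstruction to collapsibility of $C_n(y)$ is that $C_n(y)$ contains $w$ and $gw$ for some $Id \neq g$, and we apply Lemma~\ref{lem:closurecompact} to conclude $f_n(C_n(y)) \subset D_1(g) \cup D_2(g)$. Since $y \in C_n(y)$ and $f_n(y)$ is very close to $v$ but not equal to $v$, $f_n(y)$ lies in one of the components of $B \setminus v$; but the point $f_\infty(x) = v$ has the property that $f_n(x) \to v$, and $x$ is within $\lambda < \lambda_A$ of $y$, so $f_n([x,y]_A)$ has length at most $L\lambda \le \lambda_B/2$ — meaning $f_n(y)$ lies in the ball $B(v, \lambda_B/2)$, which meets every component of $B \setminus v$ adjacent to $v$. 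The $D_i(g)$ depend only on $g$ (not $n$), so as $n$ varies we'd be forcing $f_n(C_n(y))$, and in particular the value $f_n(y)$, into a fixed finite union of components independent of $n$; since $f_n(y)$ does not sub-converge and escapes every compact set (as established before the lemma), it must wander through \emph{infinitely many} components of $B \setminus v$ (because $v$ is infinite-valence and the non-sub-convergence is exactly a branching phenomenon), contradicting confinement to $D_1(g) \cup D_2(g)$. Assembling: the existence of the offending $g_n$ (for frequently many $n$) in either subcase contradicts the established behaviour of $f_n(y)$, so for all large $n$, $g C_n(y) \cap C_n(y) = \emptyset$ for every $Id \neq g \in G$, which is the claim.

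The main obstacle I anticipate is the case $g_n v = v$: unlike the free-group setting, here the stabilizer of $v$ is a possibly-huge group acting on $B$ fixing only the single vertex $v$, and one must rule out that $C_n(y)$ is invariant under a nontrivial element of this stabilizer. The resolution should come from combining $f_\infty$ being already defined on $x \in T$ (equivariance pins $f_n(g_n x)$ near $g_n v = v$) with the fact that $y$ — hence $C_n(y)$ — sits just outside $T$; any such $g_n$ would have to nearly fix $x$ while substantially moving the structure near $y$, which is incompatible with the uniform Lipschitz bound and the smallness of $d_A(x,y) < \lambda$. Making this quantitative — choosing $\lambda$ so small that $g_n$ fixing $v$ and preserving $C_n(y)$ forces $g_n x = x$, hence $g_n \in \mathrm{Stab}_A(x)$, and then noting $\mathrm{Stab}_A(x)$ acts trivially near $x$ only if $x$ is non-free, but then $x \in T$ already and the component of $y$ was forced to contain a non-free vertex, i.e.\ was never collapsible but also never the site of a contradiction — is the delicate bookkeeping step.
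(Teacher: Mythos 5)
The core idea---pass from $gC_n(y)\cap C_n(y)\neq\emptyset$ to the hypothesis of Lemma~\ref{lem:closurecompact} and then use the non-sub-convergence of $f_n(y)$---is the paper's approach, but your proposal contains a logical error in the very first step that the rest of the argument cannot repair. You negate the statement as ``frequently on $n$ there is some $Id\neq g_n\in G$'' with $g_n$ allowed to depend on $n$. The statement, however, is $\forall g\neq Id\ \exists n_0\ \forall n>n_0$, so its negation is $\exists g\neq Id$ such that $gC_n(y)\cap C_n(y)\neq\emptyset$ frequently on $n$: a \emph{single fixed} $g$. This matters because the whole point of Lemma~\ref{lem:closurecompact} is that $D_1(g)\cup D_2(g)$ depends only on $g$ and not on $n$; with a fixed $g$ one immediately traps $f_n(y)$ inside the compact set $\overline{B(v,\lambda_B)\cap(D_1(g)\cup D_2(g))}$ (at most two segments of bounded length) for frequently many $n$, hence $f_n(y)$ sub-converges, contradicting the non-sub-convergence established just before the lemma. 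With $g_n$ varying there is no fixed compact trap and the argument fails. (The uniform-in-$g$ statement you are implicitly trying to prove is only obtained later, from Lemmas~\ref{lemma:intK} and~\ref{lem:intK2}, which show that only finitely many $g$ can ever be relevant.)

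The case split $g v=v$ vs.\ $gv\neq v$ and the whole discussion around it is a detour you should drop. Once you pick $z\in gC_n(y)\cap C_n(y)$ and set $w=g^{-1}z$, you have $w,gw\in C_n(y)$ and Lemma~\ref{lem:closurecompact} applies verbatim with $C=C_n(y)$, regardless of how $g$ acts on $v$. Your worry that $\mathrm{Fix}_B(g)=\{v\}$ (for $g\in\mathrm{Stab}_B(v)\setminus\{Id\}$) ``gives no constraint'' is misplaced: in that case the midpoint of $[f_n(w),gf_n(w)]_B$ is fixed by $g$, hence is $v$, and this midpoint lies in $f_n([w,gw]_A)$; but $[w,gw]_A\subset C_n(y)\subset A\setminus f_n^{-1}(v)$, a contradiction. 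So the hypotheses of Lemma~\ref{lem:closurecompact} simply cannot be met by such a $g$, and no workaround involving $f_\infty$ and equivariance is needed. Likewise, the closing ``wanders through infinitely many components'' heuristic should be replaced by the concrete compactness-and-sub-convergence contradiction described above.
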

\proof We argue by contradiction. So there exist $g$ such that $gC_n(y)\cap
C_n(y)\neq\emptyset$ frequently on $n$. By Lemma~\ref{lem:closurecompact} $f_n(C)\subset
D_1(g)\cup D_2(g)$ frequently on $n$. Therefore $f_n(y)\in
B(v,\lambda_B)\cap(D_1(g)\cup D_2(g))$ which is the union of at most two open segments and hence
has compact closure. In particular $f_n(y)$ sub-converges contradicting the maximality of
$T$.\qed

Lemma~\ref{lem:almostnoncoll} is not enough to conclude that $C_n(y)$ is collapsible because
a priori for any given $n$ there may be infinitely many $g$  such that $gC_n(y)\cap C_n(y)\neq\emptyset$.

For any half-line starting form $y$ let $w$ the first vertex such that there is $Id\neq g_w\in G$
with
$g_ww\in[y,w]$. Let $K$ be the union of all such segments. By construction $K$ is a simplicial
tree containing $y$. Also, the diameter of $K$ is finite because there are finitely many orbit
of vertices. Moreover, the interior of $K$ does not contain any non-free vertex. Therefore, $K$
is a finite simplicial tree.

\begin{Lem}\label{lemma:intK}
  Eventually on $n$ we have $C_n(y)\subset int(K)$.
\end{Lem}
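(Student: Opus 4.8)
The plan is to argue by contradiction: assume that $C_n(y)\not\subset int(K)$ frequently on $n$, and show that this forces $f_n(y)$ to sub-converge, contradicting the maximality of $(T,f_\infty)$ exactly as in the main argument. The crucial feature to exploit is that $K$ is a \emph{finite} tree, hence has only finitely many leaves, and each leaf $w$ of $K$ carries a fixed element $Id\neq g_w\in G$ with $g_ww\in[y,w]$; it is this finiteness that promotes the ``one group element at a time'' conclusion of Lemma~\ref{lem:almostnoncoll} to a uniform statement.

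First I would establish a purely combinatorial fact about $K$, namely that its frontier in $A$ is exactly the set of its leaves. Indeed, if $q\in K$ is not a leaf then $q$ lies strictly between $y$ and some leaf, so no vertex strictly between $y$ and $q$ along $[y,q]$ has the stopping property used to define $K$ (otherwise $q$ itself would not lie in $K$); consequently every half-line issuing from $y$ and passing through $q$ reaches $q$ un-stopped and continues past it, so every $A$-direction at $q$ lies in $K$, i.e. $q\in int(K)$. Since $d_A(x,y)<\lambda\leq\lambda_A$, the point $y$ is not a vertex of $A$, hence not a leaf of $K$, so $y\in int(K)$.

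Next, for each $n$ with $C_n(y)\not\subset int(K)$ I would pick $z\in C_n(y)\setminus int(K)$. As $C_n(y)$ is a connected subset of the tree $A$ it is convex, so $[y,z]\subset C_n(y)$; since $[y,z]$ runs from $y\in int(K)$ to a point outside $int(K)$, by the previous step it passes through some leaf $w_n$ of $K$ with $[y,w_n]\subset[y,z]\subset C_n(y)$. In particular both $w_n$ and $g_{w_n}w_n\in[y,w_n]$ belong to $C_n(y)$. Since $K$ has only finitely many leaves, after passing to a sub-sequence I may assume that $w_n=w$, and hence that $g_{w_n}=g\coloneqq g_w$, is independent of $n$.

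Finally I would invoke Lemma~\ref{lem:closurecompact} with $C=C_n(y)$, the point $w\in C_n(y)$ and the element $Id\neq g\in G$ (recalling $gw\in C_n(y)$): it yields two components $D_1(g),D_2(g)$ of $B\setminus v$, \emph{independent of $n$}, with $f_n(C_n(y))\subset D_1(g)\cup D_2(g)$. Hence $f_n(y)$ lies in $B(v,\lambda_B)\cap(D_1(g)\cup D_2(g))$, a union of at most two bounded sub-segments of edges of $B$ at $v$ and therefore a set with compact closure; so $f_n(y)$ sub-converges, the desired contradiction. The step I expect to need the most care is the combinatorial claim that the frontier of $K$ is its leaf set, together with the observation that the resulting element $g$ can be chosen independent of $n$; once that is in place, the $n$-independence built into Lemma~\ref{lem:closurecompact} does the rest.
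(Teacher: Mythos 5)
Your argument is correct, and it reaches the same conclusion the paper does, but by a slightly different route. The paper's proof applies Lemma~\ref{lem:almostnoncoll} to each element of the finite set $G_K=\{g_w : w \text{ a leaf of } K\}$, passes to a sub-sequence on which $g_wC_n(y)\cap C_n(y)=\emptyset$ for all $g_w\in G_K$ simultaneously, and then concludes directly: $C_n(y)$, being connected and containing $y$, cannot contain the segment $[g_ww,w]$ and hence cannot reach any leaf of $K$. You instead argue by contrapositive and bypass Lemma~\ref{lem:almostnoncoll}, invoking Lemma~\ref{lem:closurecompact} directly after extracting a constant leaf $w$ (and hence a constant $g=g_w$) from the finitely many possibilities; the resulting compactness of $B(v,\lambda_B)\cap(D_1(g)\cup D_2(g))$ then forces $f_n(y)$ to sub-converge, which contradicts the maximality of $T$. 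In effect you have inlined the proof of Lemma~\ref{lem:almostnoncoll} at the one place it is needed. Both proofs hinge on the same two points — finiteness of $K$ and the compactness argument of Lemma~\ref{lem:closurecompact} — and your version also spells out, more carefully than the paper, why the frontier of $K$ in $A$ is exactly the set of leaves (relying on $y$ not being a vertex, which follows from the choice of $\lambda$). The paper's route is a bit shorter since Lemma~\ref{lem:almostnoncoll} is already available as a black box, while yours is self-contained modulo Lemma~\ref{lem:closurecompact} and makes the combinatorial structure of $K$ explicit.
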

\proof Since $K$ is finite the collection $G_K=\{g_w\ :\ w\text{ a leaf of } K\}$ is finite. By
Lemma~\ref{lem:almostnoncoll} for every $g_w\in G_K$, $g_wC_n(y)\cap C_n(y)=\emptyset$ eventually
on $n$. Up to passing to a sub-sequence we may suppose that this happens for any $n$. Therefore
$C_n(y)$ cannot contain the segment $[g_ww,w]$. Since $C_n(y)$ is connected and contains $y$,
it follows that it does not contain any of the leaves of
$K$. The claim follows.\qed

\begin{Lem}\label{lem:intK2}
  There are only finitely many $g\in G$ such that $g(int(K))\cap int(K)\neq\emptyset$.
\end{Lem}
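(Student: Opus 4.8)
The plan is to show that each such $g$ is pinned down by a bounded amount of combinatorial data, exploiting that $int(K)$ avoids non-free vertices together with conditions (C1) and (C3) --- trivial edge stabilizers and trivial stabilizers of free vertices.

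First I would extract genuine finiteness from the hypothesis that $K$ is a \emph{finite} simplicial tree inside $A$. Since $\quot GA$ is a finite graph, $A$ has finitely many orbits of edges, hence --- edges in an orbit being isometric --- finitely many edge lengths, so there is $\ell_0>0$ bounding them all below. Thus each of the finitely many edges of $K$, being a geodesic arc of finite length in $A$, meets only finitely many edges and vertices of $A$; summing over the edges of $K$, only finitely many open edges of $A$ meet $int(K)$ --- call this finite collection $\mathcal E$ --- and only finitely many vertices of $A$ lie on $K$. The vertices of $A$ lying in $int(K)$ therefore form a finite set $V_0$, and by the already-noted fact that $int(K)$ contains no non-free vertex, every element of $V_0$ is a free vertex and so has trivial stabilizer by (C3). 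Since $A$ is the disjoint union of its open edges and its vertices, we get the covering
$$int(K)\ \subseteq\ \Big(\bigcup_{e\in\mathcal E}e^{\circ}\Big)\cup V_0 .$$

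Next, fix $g$ with $g\,int(K)\cap int(K)\neq\emptyset$ and pick $z\in int(K)$ with $g^{-1}z\in int(K)$. As $g$ acts simplicially it permutes $VA$ and carries open edges onto open edges, so $z$ is a vertex of $A$ if and only if $g^{-1}z$ is. If $z$ lies in an open edge $e^{\circ}$, then $e\in\mathcal E$; moreover $g^{-1}z$ lies in $(g^{-1}e)^{\circ}$ and is not a vertex of $A$, so $(g^{-1}e)^{\circ}$ meets $int(K)$ and hence $g^{-1}e\in\mathcal E$; thus $g$ carries the edge $g^{-1}e\in\mathcal E$ onto the edge $e\in\mathcal E$, and since edge stabilizers are trivial by (C1), $g$ is the unique element of $G$ doing so. If instead $z$ is a vertex of $A$, then so is $g^{-1}z$, and both lie in $V_0$; since $g^{-1}z$ is a free vertex it has trivial stabilizer, so $g$ is the unique element of $G$ carrying $g^{-1}z$ onto $z$. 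In either case $g$ is determined by an element of the finite set $(\mathcal E\times\mathcal E)\sqcup(V_0\times V_0)$, and distinct $g$ give distinct such elements, so there are at most $|\mathcal E|^{2}+|V_0|^{2}$ such $g$.

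I do not anticipate a real difficulty, but two steps deserve care. First, ``$K$ is finite'' must be turned into a genuinely finite list of edges and vertices even though $A$ is not locally compact; this is exactly where the lower bound $\ell_0$ on edge lengths is used, and where one also notes that the base point $y$, which need not be a vertex of $A$, contributes at most one extra vertex to $K$ and is otherwise irrelevant. Second, it is essential to allow the overlap point $z$ to be a \emph{vertex} of $A$: a free vertex of $A$ may lie in $int(K)$ without all of its incident edges lying in $K$, so an argument phrased purely in terms of edges would not directly locate $g$ --- the remedy used above is that such a vertex is free, hence has trivial stabilizer, which pins down $g$ just as trivial edge stabilizers do in the generic case.
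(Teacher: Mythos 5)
Your proof is correct and follows essentially the same route as the paper: show that only finitely many edges of $A$ can meet $int(K)$ (since $K$ is a finite tree in a space with a positive lower bound on edge lengths), and then use free action on edges to pin down $g$. One remark: the separate treatment of the case where the overlap point $z$ is a vertex of $A$ is not actually needed. Since $g(int(K))\cap int(K)$ is open (both factors are open and $g$ acts by homeomorphism), if it is nonempty it must meet the interior of some edge of $A$, so one can always choose $z$ to be a non-vertex and reduce to the edge case; this is the route the paper takes. Your vertex-case argument via (C3) is nonetheless correct, and your observation about the lower bound $\ell_0$ on edge lengths makes precise why ``$K$ finite'' yields genuine combinatorial finiteness even though $A$ is not locally compact, a point the paper passes over quickly.
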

\proof If $g(int(K))\cap
int(K)\neq\emptyset$ then it is open, so it contains the interior of an edge $\sigma$. Thus
both $\sigma$ and $g^{-1}\sigma$ are in $int(K)$.
Since $K$
is a finite tree, it contains finitely many open edges. For any such $\sigma\in int(K)$ there
are only finitely many $g\in G$ such that $g^{-1}\sigma\in int(K)$, again because $K$ is a finite
tree and the action of $G$ on edges is free.\qed

As a direct corollary of Lemmas~\ref{lemma:intK} and~\ref{lem:intK2}, we get that  the family
of elements $g\in G$ such that
$gC_n(y)\cap C_n(y)$ could possibly be non-empty is finite and independent of $n$. Therefore, by
Lemma~\ref{lem:almostnoncoll} eventually on $n$, for all $Id\neq g\in G, \ gC_n(y)\cap
C_n(y)=\emptyset$. That is to say, $C_n(y)$ is eventually collapsible. A contradiction.\qed

\section{Optimal maps}
In this section we describe a class of maps, called optimal maps, which provide a useful tool
for computing stretching factors and studying train track maps.

 \begin{Def}[Train track, from~\cite{BestvinaBers}]\label{dtt}
A {\em  pre-train track structure} on a $G$-tree $T$ is a
$G$-invariant equivalence
relation on the set of germs of edges at each vertex of
$T$. Equivalence classes of germs are called {\em gates}.
A {\em train track structure} on a $G$-tree $T$ is a pre-train track structure
with at least two gates at every vertex.
A {\em turn}
is a pair of germs of edges emanating from the same vertex. A turn is
{\em legal} if the two germs belong to different equivalent classes. An
immersed path is legal if it has only legal turns.
  \end{Def}

\begin{Def} Given $A,B\in\mathcal O$ and a PL-map $f:A\to B$, we
  denote by  $A_{\max}(f)$ (or simply $A_{\max}$)
  the subgraph of $A$ consisting on those edges $e$ of
 $A$ for which $S_{f,e}=\Lip(f)$. That is to say, the set of edges
 maximally stretched by $f$.
\end{Def}

Note that $A_{\max}$ is $G$-invariant. We notice that in literature the set $A_{\max}$ is often referred to as {\em tension graph}.

\begin{Def}\label{def:pretrainf} Let $A,B\in\mathcal O$ and $f:A\to B$ be a PL-map. The pre-train track structure
  induced by $f$ on $A$ is defined by
declaring germs of edges to be equivalent if they have the same {\bf non-degenerate} $f$-image.
\end{Def}

\begin{Def}[Optimal map]
Let $A,B\in\mathcal O$. A PL-map $f\colon A\to B$ is {\it not optimal}
at $v$ if $A_{\max}$ has only one gate at $v$ for the pre-train track
structure induced by $f$. Otherwise $f$ is {\it optimal} at $v$. The map
$f$ is {\it optimal} if it is optimal at all vertices.
\end{Def}

\begin{Rem}\label{ropt}
An PL-map $f:A\to B$ is optimal if and only it the pre-train track
structure induced by $f$  is a train track structure on $A_{\max}$.
In particular if
$f:A\to B$ is an optimal map, then at
every vertex $v$ of $A_{\max}$ there is a legal turn
in $A_{\max}$.
\end{Rem}

\begin{Lem} Let $A,B\in\mathcal O$ and let $f\colon A\to B$ be a PL-map.
Then $f$ is optimal at non-free vertices. Equivalently, every non-free vertex has at least two
gates.
 \end{Lem}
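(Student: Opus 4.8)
The plan is to exploit the rigidity of $\mathcal O$-maps on non-free vertices established in Lemma~\ref{lem:Omapsexists}. Let $v$ be a non-free vertex of $A$ with stabilizer $H=\mathrm{stab}(v)$, which by (C2) is (conjugate to) one of the $G_i$'s and hence non-trivial. The point is that $H$ acts on the set of germs of edges at $v$, and this action must have at least two orbits; since the pre-train track structure induced by $f$ is $G$-invariant, each $H$-orbit of germs is contained in a single gate, but I will show that germs in different $H$-orbits actually land in different gates because their $f$-images are forced apart. First I would check that $H$ acts on germs at $v$ with at least two orbits: if $H$ fixed every germ at $v$, then combined with degree considerations $v$ would be redundant or the tree would fail minimality — more carefully, by (C0) and (C1) every vertex has degree $\geq 2$ (wait, we need degree $\geq 3$ at vertices with trivial stabilizer, but a non-free vertex could have degree $2$); if $v$ has degree $2$ and $H$ fixes both edges then $v$ is redundant by definition, contradicting (C0); and if $v$ has degree $\geq 3$ then $H$ cannot act transitively on the $\geq 3$ germs while having the quotient graph $\quot GT$ finite and the action with trivial edge stabilizers — actually the cleanest statement is that since edge stabilizers are trivial (C1), the $H$-action on germs at $v$ is free, so the number of germs is a multiple of $|H|\geq 2$, and there are at least $\deg(v)/|H|\cdot\big(\text{number of orbits}\big)$... let me just say: $H$ acts freely on germs at $v$, so if there were a single orbit then $\deg(v)=|H|$, but then $v$ would be a terminal-type or redundant configuration depending on $|H|$; in any case one argues there must be at least two $H$-orbits of germs.

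Next, I would show that two germs $g_1,g_2$ at $v$ lying in different $H$-orbits have different non-degenerate $f$-images, so that they are in different gates for the pre-train track structure induced by $f$, and moreover that the corresponding edges both lie in $A_{\max}$ — no wait, the Lemma only claims optimality at non-free vertices, which by Definition means $A_{\max}$ has at least two gates at $v$; so I must also verify that $A_{\max}$ contains germs from at least two gates. The key observation is: since $f$ is $G$-equivariant, $f$ maps the germs at $v$ $H$-equivariantly to germs at $f(v)=w$ (which is also fixed by $H$, by Lemma~\ref{lem:Omapsexists}), and $f$ restricted to each edge at $v$ has a well-defined stretching factor $S_{f,e}$; because $f$ is $H$-equivariant and $H$ acts by isometries, all edges in a single $H$-orbit have the same stretching factor. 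Since $A_{\max}$ is $G$-invariant, it is a union of $H$-orbits of edges at $v$. I claim $A_{\max}$ contains edges from at least two $H$-orbits at $v$: if it contained edges from only one orbit, then... hmm, this is not automatic from the above. Let me reconsider.

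The honest route: the Lemma as stated says "every non-free vertex has at least two gates" as the equivalent formulation, and re-reading Definition of "optimal at $v$" — $f$ is \emph{not} optimal at $v$ iff $A_{\max}$ has only \emph{one} gate at $v$. So if $A_{\max}$ has no edge at $v$ at all, then vacuously it has "zero gates", not "one gate", so $f$ is optimal at $v$ by default. If $A_{\max}$ has edges at $v$, these form a $G$-invariant (hence $H$-invariant) set, so a union of $H$-orbits; the $f$-images of germs in distinct $H$-orbits are germs at $w$ in distinct $H$-orbits (since $f$ is $H$-equivariant and $H$ acts freely on germs at $w$ too — edge stabilizers trivial), hence are distinct germs, hence have distinct non-degenerate images, hence lie in distinct gates. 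So the only case to rule out is: $A_{\max}$ at $v$ is a single $H$-orbit of germs, all in one gate. Here I would argue: within a single $H$-orbit $\{h\cdot g_0 : h\in H\}$ of germs, the $f$-images $h\cdot f(g_0)$ are $|H|\geq 2$ distinct germs at $w$ (free action), so they are \emph{not} all equal, so they do not all lie in the same gate — the gate relation identifies germs with equal images, and these images are distinct. Therefore a single $H$-orbit at $v$ already provides at least two gates (in fact $|H|$ of them). This is the crux, and it is where triviality of edge stabilizers (the freeness of the $H$-action on germs) does the real work.

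\medskip

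Thus the structure of the proof I will write is: (1) reduce to a non-free vertex $v$ with non-trivial stabilizer $H$, and $w=f(v)$ likewise fixed by $H$; (2) observe $f$ is $H$-equivariant on germs at $v$, and $H$ acts freely on germs at both $v$ and $w$ since edge stabilizers are trivial; (3) note $A_{\max}$ at $v$ is $H$-invariant, and if it is empty there is nothing to prove; (4) otherwise it contains a full $H$-orbit of germs, whose $f$-images are $|H|\geq 2$ distinct germs at $w$, hence not all in one gate, so $A_{\max}$ has $\geq 2$ gates at $v$, i.e.\ $f$ is optimal at $v$. The main obstacle — and the only subtle point — is step (4): making sure that "distinct non-degenerate $f$-images" is the right reading of the gate relation (Definition~\ref{def:pretrainf}), and that the germs in question indeed have non-degenerate image, which holds because $f$ is PL and hence locally injective on edges, so the image of each germ is a non-degenerate germ. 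Everything else is bookkeeping with equivariance.
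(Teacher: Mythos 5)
Your final argument is correct and is essentially the paper's proof: for a non-free $v$ with stabilizer $H$, equivariance gives $f(\gamma x)=\gamma f(x)$ for $\gamma\in H$, and trivial edge stabilizers in $B$ force $\gamma f(x)\neq f(x)$, so $x$ and $\gamma x$ lie in different gates; applying this to an edge of $A_{\max}$ at $v$ (which carries the whole $H$-orbit, $A_{\max}$ being $G$-invariant) gives optimality at $v$. Note that the opening claim in your first paragraph --- that $G$-invariance of the gate relation places each $H$-orbit of germs inside a single gate --- is the opposite of what is true (and of what your own crux shows), but you abandon that line and the remainder of the write-up is sound; the paper additionally treats the case of a germ collapsed to $f(v)$ as a separate subcase, which you sidestep via the local injectivity of PL maps, and which in any event never arises for germs in $A_{\max}$.
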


 \begin{proof}
    Let $v$ be a non-free vertex of $A$ and let $x$ be an edge emanating
from $v$. If the germ of $x$ is collapsed by $f$ to $f(v)$ then for any $\gamma\in
\operatorname{Stab}(v)$ also $\gamma x$ is collapsed to $f(v)$. By definition such
two germs $x$ and $\gamma x$ are not equivalent in the pre-train track structure induced by
$f$, so $v$ has at least two gates. If
the germ of $x$ is not collapsed to $f(v)$ then by equivariance we have
$f(\gamma x)=\gamma f(x)$, and since $B$ has trivial edge-stabilizers $\gamma f(x)$ is
different from $f(x)$. Therefore $x$ and $\gamma x$ have different non-degenerate images  and
thus are not equivalent. Hence $v$
has again at least two gates.
 \end{proof}

\begin{Lem}\label{lem:optimal}
Let $A,B\in\mathcal O$ and let $f\colon A\to B$ be a PL-map. If $f$ is
not optimal, then there is a PL-map $h\colon A\to B$ such that either
$\Lip(h)\lneq\Lip(f)$ or
$A_{\max}(h)\subsetneq A_{\max}(f)$ (or both).
\end{Lem}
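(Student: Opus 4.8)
The plan is to argue locally, at a vertex $v$ where $f$ fails to be optimal, and to produce a new map by rerouting (or "pushing") the image near $v$ in a direction that decreases tension. Since $f$ is not optimal there is a vertex $v$ at which $A_{\max}$ has a single gate for the pre-train track structure induced by $f$. By the previous lemma this $v$ is necessarily a free vertex, so $\mathrm{stab}(v)$ is trivial and the orbit of $v$ is freely isomorphic to $G$; this means I can modify $f$ on a small ball around $v$ and spread the modification equivariantly over the orbit without any consistency constraints. The single gate at $v$ in $A_{\max}$ means all edges of $A_{\max}$ emanating from $v$ have the same non-degenerate $f$-image germ, i.e.\ they all start by running along a common initial segment $[f(v),z]_B$ for some $z\neq f(v)$.

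First I would set up the perturbation. For small $t>0$ define $f_t$ to agree with $f$ outside the ball $B(v,\delta)$ (for a $\delta$ smaller than half the length of every edge at $v$), and on that ball move the image of $v$ a distance $t$ along $[f(v),z]_B$, re-linearising $f_t$ on each (half-)edge at $v$ so that it still maps the edge-endpoint correctly. Extend equivariantly over $Gv$. For an edge $e$ of $A_{\max}$ at $v$, the image of its $v$-end now starts further along $[f(v),z]$, so the distance $l_B(f_t(e))$ between the images of the two endpoints of $e$ is $l_B(f(e)) - t = \Lip(f)\, l_A(e) - t$, hence the new stretching factor $S_{f_t,e} = \Lip(f) - t/l_A(e) < \Lip(f)$. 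So every edge of $A_{\max}$ at $v$ strictly drops below $\Lip(f)$. For edges $e'$ at $v$ not in $A_{\max}$, we had $S_{f,e'}<\Lip(f)$ strictly, so by continuity $S_{f_t,e'} < \Lip(f)$ for $t$ small; edges incident to $v$ that point the "wrong way" (whose image germ is not along $[f(v),z]$) have their image-length increase by at most $t$, so again stay below $\Lip(f)$ for small $t$. Edges of $A$ not incident to $Gv$ are unchanged.

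Now I distinguish two cases according to whether $A_{\max}(f)$ is a single $G$-orbit of the star of $Gv$ or larger. In all cases, choosing $t$ small enough: either every edge of $A_{\max}(f)$ has had its stretching factor strictly decreased — which happens exactly when $A_{\max}(f)$ is contained in the union of the orbits of stars of vertices at which we pushed, and then possibly $\Lip(f_t) < \Lip(f)$, or at least $A_{\max}(f_t) \subsetneq A_{\max}(f)$ since the edges at $v$ have dropped out while no edge outside $A_{\max}(f)$ has risen to $\Lip(f)$; or $A_{\max}(f)$ has edges away from the pushed vertices, whose stretching is unchanged at $\Lip(f)$, so $\Lip(f_t)=\Lip(f)$ but the edges at $v$ are removed, giving $A_{\max}(f_t)\subsetneq A_{\max}(f)$. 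Finally set $h = PL(f_t)$; by the Remark following the definition of $PL$, $\Lip(h)\le \Lip(f_t)\le \Lip(f)$ and $PL$ does not change stretching factors computed from endpoint images, so $A_{\max}(h) = A_{\max}(f_t)$. This gives the claim.

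The main obstacle is bookkeeping the finitely many "non-$A_{\max}$ but incident to $Gv$" edges and the edges at the \emph{other} endpoint of the pushed edges: pushing $f(v)$ lengthens some of those images, and one has to check the increase is $O(t)$ and strictly below the old maximum, which is where finiteness of the number of orbits of edges (so a uniform gap between $\Lip(f)$ and the next stretching value) is used, together with choosing $t$ uniformly small. A secondary subtlety is ensuring the perturbed map is still a genuine $\mathcal O$-map (surjective, with image a $G$-invariant subtree equal to $B$ by (C1)); surjectivity is preserved because we only reparametrise near $Gv$ and the global image does not shrink.
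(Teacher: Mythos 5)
Your proposal is correct and follows essentially the same approach as the paper: perturb the image of a free vertex $v$ at which $f$ fails to be optimal by pushing it equivariantly into the common gate direction, which strictly decreases the stretching factor on every $A_{\max}$-edge incident to $Gv$ while keeping all other edges below $\Lip(f)$ for $t$ small. The paper is slightly slicker in that it defines $f_t$ directly as the unique PL-map with $f_t(gv)=gf(p_t)$ for $p_t$ on a chosen edge $e$ of $A_{\max}$ at $v$, which absorbs your separate re-linearisation and $PL(\cdot)$ step into the construction, but the underlying argument and case split are the same.
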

\begin{proof}
Let $v$ be a (free) vertex of $A_{\max}$ where $f$ is not optimal, and let
$e$ be an edge of $A_{\max}$ incident to $v$.

For $t\in[0,l_A(e)]$ let $p_t$ be the point in $e$ at distance $t$ from $v$. Let $f_t$ be the unique
PL-map $A\to B$ such that for $w\in VA$
$$
f_t(w)=
\left\{
\begin{array}{ll}
f(w) & \text{ if } w\neq gv,\ g\in G\\
f_t(gv)=gf(p_t)& g\in G
\end{array}
\right.
$$

For small enough $t$,
if all the edges of  $A_{\max}$ are incident to a point in the orbit of $v$,
 then we obtain that  $\Lip(f_t)\lneq\Lip(f)$;
otherwise we get that $A_{\max}(f_t)$ is obtained from $A_{\max}(f)$
by removing the edges of $A_{\max}$ incident to $v$ and its orbit. We set $h=f_t$.
\end{proof}

\begin{Cor}\label{cor:optimalfromf}
  For any $A,B\in\mathcal O$ there exists an optimal map $h:A\to B$. Moreover, if a PL-map
  $f:A\to B$ is not optimal but minimizes the Lipschitz constant, then there is an optimal map
  $h$ such that
  $A_{\max}(h)\subsetneq A_{\max}(f)$.
\end{Cor}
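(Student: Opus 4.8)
The plan is to use Lemma~\ref{lem:optimal} as the inductive step of a descent argument, made well-founded by combining Theorem~\ref{thm:finfty} with the finiteness of the number of orbits of edges of a $G$-tree in $\mathcal O$. The key observation is that once we have fixed a PL-map realizing the minimal Lipschitz constant, Lemma~\ref{lem:optimal} can no longer lower that constant, so it is forced to strictly shrink the tension graph; and a tension graph, being a $G$-invariant subgraph of $A$, can be strictly shrunk only finitely many times.

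In detail, I would first establish the ``moreover'' part, since the first assertion then follows immediately. Let $f\colon A\to B$ be a PL-map minimizing the Lipschitz constant among all $\mathcal O$-maps $A\to B$; such an $f$ exists by Theorem~\ref{thm:finfty}. Put $L=\Lip(f)$. For any PL-map $g\colon A\to B$ the tension graph $A_{\max}(g)$ is a non-empty $G$-invariant subgraph of $A$ (non-empty because $\Lip(g)=\max_e S_{g,e}$ is attained), so the number $N(g)$ of $G$-orbits of edges of $A_{\max}(g)$ is a positive integer --- here we use that trees in $\mathcal O$ have only finitely many orbits of edges. Assume $f$ is not optimal. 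Applying Lemma~\ref{lem:optimal} yields a PL-map $h_1$ with $\Lip(h_1)\lneq L$ or $A_{\max}(h_1)\subsetneq A_{\max}(f)$. The first alternative is excluded by minimality of $L$; hence $A_{\max}(h_1)\subsetneq A_{\max}(f)$, so $N(h_1)<N(f)$, and since $\Lip(h_1)\le\Lip(f)=L$ we get $\Lip(h_1)=L$, i.e.\ $h_1$ is again Lipschitz-minimizing. If $h_1$ is optimal we stop; otherwise we repeat with $h_1$ in place of $f$. This produces Lipschitz-minimizing PL-maps $f, h_1, h_2,\dots$ with $N(f)>N(h_1)>N(h_2)>\cdots\ge 1$, so the process terminates after at most $N(f)-1$ steps, necessarily at an optimal PL-map $h$; and since at least one step was needed, $A_{\max}(h)\subsetneq A_{\max}(f)$.

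For the first assertion, apply Theorem~\ref{thm:finfty} to obtain a Lipschitz-minimizing PL-map $f\colon A\to B$: if $f$ is optimal we are done, and otherwise the ``moreover'' part just proved supplies an optimal map. The only real subtlety --- and the point where the argument would break if done naively --- is the termination: running Lemma~\ref{lem:optimal} starting from an arbitrary (non-minimizing) PL-map need not terminate, since after a drop in the Lipschitz constant the tension graph can reappear larger; pre-minimizing the Lipschitz constant via Theorem~\ref{thm:finfty} is exactly what rules this out.
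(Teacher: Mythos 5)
Your proof is correct and follows essentially the same route as the paper: invoke Theorem~\ref{thm:finfty} to get a Lipschitz-minimizing PL-map, then apply Lemma~\ref{lem:optimal} repeatedly, using minimality of the Lipschitz constant to force the tension graph to shrink and finiteness of edge orbits to guarantee termination. The only cosmetic difference is that the paper obtains the existence claim by picking, among Lipschitz-minimizers, a map with smallest $A_{\max}$, whereas you run the descent explicitly and derive existence from the ``moreover'' part --- but this is the same argument.
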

\proof
By Lemma~\ref{lem:minn} we know $$\Lambda_R(A,B)\leq\inf\{\Lip(h)
\textrm{ s.t. } h:A\to B
\textrm{ is an } \mathcal O\textrm{-map}\}.$$
   By Theorem \ref{thm:finfty} there exists a PL-map $f$ with $\Lip(f)$
  minimal. Among such maps we choose $f$ so that $A_{\max}$ is
  the smallest possible. By Lemma~\ref{lem:optimal} $f$ is optimal.

As for the second claim,
recall that there are finitely many orbits of edges. So if $A_{\max}(h)\subsetneq A_{\max}(f)$,
then the number of orbits of edges in $A_{\max}(h)$ is strictly less than that in
$A_{\max}(f)$. Therefore, given $f$ that minimizes the Lipschitz constant, repeated use of
Lemma~\ref{lem:optimal} gives the desired conclusion.\qed

\begin{Def}
Let $f$ be as in Corollary~\ref{cor:optimalfromf}. By $Opt(f)$ me mean any optimal map $h$ as
given in that corollary.
\end{Def}

\begin{Def}
  Given $A,B\in\mathcal O$ and a PL-map $f:A\to B$, a sub-tree $L\subset A$ is {\em tight} if
  $L\subset A_{\max}$ and $f|_L$ is injective.
\end{Def}
We notice that if $L=\axis_A(g)$ for some $g\in G$, and $L$ is tight, then $f(L)=\axis_B(g)$.

\begin{Thm}\label{thm_optimal_map}
 For any optimal map $f:A\to B$
there is an element $g\in G$ so that its axis in $A$ is tight.
In particular $l_B(g)/l_A(g)=\Lip(f)=\Lambda_R(A,B)$.
\end{Thm}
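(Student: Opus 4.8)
The plan is to build a legal axis by following maximally-stretched legal turns, exploiting the fact that an optimal map gives at least two gates at every vertex of $A_{\max}$ (Remark~\ref{ropt}). First I would observe that, since $f$ is optimal, at every vertex $v\in VA_{\max}$ there is a legal turn inside $A_{\max}$; that is, there are two edges $e_1,e_2\in EA_{\max}$ emanating from $v$ whose non-degenerate $f$-images differ. I then construct an infinite reduced legal path $\gamma$ in $A_{\max}$: start with any edge of $A_{\max}$, and inductively, having arrived at a vertex $w$ along some edge $e$, use optimality at $w$ to choose an outgoing edge $e'\in EA_{\max}$ making a legal turn with $\bar e$. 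The resulting bi-infinite path is legal, so $f|_\gamma$ is locally injective on edges and never backtracks at vertices, whence $f|_\gamma$ is injective and $f(\gamma)$ is a reduced (hence geodesic) line in $B$; moreover, since every edge of $\gamma$ lies in $A_{\max}$, $f|_\gamma$ multiplies lengths by exactly $\Lip(f)$.

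Next I would promote this to an invariant line. Since $\quot{G}{A}$ is a finite graph, the projection of $\gamma$ to $\quot{G}{A}$ must eventually repeat an oriented edge; by a standard pigeonhole/compactness argument (passing to a tail of $\gamma$ and using that there are only finitely many orbits of edges together with finitely many legal continuations at each vertex), one extracts a periodic legal line. Concretely, some oriented edge-orbit recurs, and we can arrange that a fundamental segment of the periodic pattern is realized by an honest segment $[x,gx]_A$ for some $g\in G$: the periodicity of the combinatorial pattern upgrades to a $g$-invariant bi-infinite legal line $\ell\subset A_{\max}$ on which $g$ acts by translation. Then $\ell=\axis_A(g)$ (a $g$-invariant line is the axis), $\ell$ is tight because it is legal and contained in $A_{\max}$, and by the remark preceding the theorem $f(\ell)=\axis_B(g)$.

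Finally, tightness gives $l_B(g)=l_B(f(\axis_A(g)))=\Lip(f)\,l_A(g)$, since $f$ stretches every edge of $A_{\max}$ by exactly $\Lip(f)$ and $f|_{\axis_A(g)}$ is injective, so no cancellation occurs when measuring translation length. Combined with Lemma~\ref{lem:minn}, which gives $\Lambda_R(A,B)\le\Lip(f)$ for any $\mathcal O$-map, and the trivial bound $l_B(g)/l_A(g)\le\Lambda_R(A,B)$, we get $\Lip(f)=l_B(g)/l_A(g)\le\Lambda_R(A,B)\le\Lip(f)$, forcing equality throughout.

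I expect the main obstacle to be the passage from an infinite legal ray to a genuinely $G$-periodic one: one must be careful that ``repeating an oriented edge orbit'' yields an element $g$ whose translate of the initial portion of the path matches the path combinatorially \emph{and} that the legal turns are preserved under this identification, so that the glued bi-infinite object is legal and $g$-invariant rather than merely eventually periodic. The finiteness of $\quot{G}{A}$ (noted after (C3)) and the fact that at each vertex of $A_{\max}$ there are only finitely many edges of $A_{\max}$ — hence finitely many legal continuations — are what make this pigeonhole work; the rest is routine.
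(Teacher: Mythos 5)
Your proposal is essentially the paper's proof: build a legal line in $A_{\max}$ by following legal turns (using Remark~\ref{ropt}), use finiteness of $\quot{G}{A}$ to find a repeated oriented edge-orbit, splice the repeating segment into a $g$-invariant legal axis, and conclude via Lemma~\ref{lem:minn}. The one concern you flag at the end is not actually a gap and is resolved in two equivalent ways: the paper chooses the legal continuation $\tau(e)$ \emph{$G$-equivariantly} (a ``successor'' map on edges of $A_{\max}$), so that once $e_j=g e_i$ the entire ray satisfies $e_{j+k}=g e_{i+k}$ and is eventually periodic with period $g$; alternatively, your raw splicing $\ell=\bigcup_k g^k[x,gx]$ works because the turn at each gluing vertex is the $g^k$-translate of the turn $(e_{j-1},e_j)$, which is legal since the ray was built legal and a pre-train track structure is $G$-equivariant by definition (Definition~\ref{dtt}), while embeddedness of $\ell$ follows from the fact that legal paths in a tree are embedded.
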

\begin{proof}
Let $f:A\to B$ be any optimal map.
By Remark~\ref{ropt} every vertex of $A_{\max}$ has a legal
turn. Since pre-train track structures are
$G$-equivariant, we can $G$-equivariantly associate to any edge $e$ of
$A_{\max}$ incident to a vertex $v$, a legal turn $\tau(e)$ at $v$ in
$A_{\max}$, containing $e$. This defines a
successor of $e$ in $A_{\max}$.
Starting from an edge
$e_0$ in $A_{\max}$, the path obtained by concatenating successors,
defines an embedded legal half-line, which eventually becomes periodic
because $A$ has finitely many orbits of edges. The period $g\in G$ has the
requested properties.
\end{proof}


\section{Folding paths and geodesics}

\subsection{Local folds}
For this sub-section we fix $A,B\in \mathcal O$, and a PL-map $f:A\to B$.

\begin{Def}[Isometric folding relations]
  For any $v\in VA$, $t\in\R$ and pair $\tau$ of edges $\tau=(\alpha,\beta)$ with
  $S_\alpha(f)=S_\beta(f)$, and emanating from $v$ such that $f(\alpha)$ and $f(\beta)$ agree on some non-trivial segment.
  We define an equivalence relation $\sim_{\tau,t}$ on $A$ as follows. First we declare $x\in
  \alpha$ and   $y\in \beta$ to be equivalent if $d(x,v)=d(y,v)\leq t$
 and, after the isometric identification of $[v,x]$ and $[v,y]$, we have
 $f|_{[v,x]}=f|_{[v,y]}$. Then we extend this relation to the orbit of $\tau$ by equivariance.
\end{Def}

\begin{Def}
  Given $v,\tau$ as above, and $t\in\R$ we define $A_{\tau,t}=A\sim_{\tau,t}$ equipped with the
  metric making the quotient map $q_{\tau,t}:A\to A_{\tau,t}$ a local isometry. The map $f$
  splits as $$\xymatrix{A\ar[r]^f\ar[d]_{q_{\tau,t}}&B\\ A_{\tau,t}\ar[ur]_{f_{\tau,t}}}$$
We say that $A_t$ is obtained by (equivariantly) folding $\tau$.
If $\tau$ is understood we shall abuse notation and suppress the subscript $\tau$.
\end{Def}

By definition, a fold depends on how $f$ overlaps edges.
When necessary, we will say that a fold is {\bf directed by $f$} to emphasize this fact.

\begin{Lem}\label{lem:tensionfold}
In the present setting, for any $t$ we have that  either $\Lip(PL(f_t))<\Lip(f)$ or
$A_{t\, \max}(PL(f_t))\subseteq q_t(A_{\max}(f))$.
\end{Lem}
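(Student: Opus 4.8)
The plan is to analyze how the stretching factor of an edge changes under the fold $q_t$. Fix $t$ and write $q=q_{\tau,t}$, $f_t=f_{\tau,t}$, so that $f=f_t\circ q$. Since $q$ is a local isometry, every edge $\bar e$ of $A_{t}$ (in the simplicial structure induced by $q$, then passing to $PL$) is the isometric image of a piece of an edge of $A$; more precisely, for each edge $e$ of $A$ the restriction $q|_e$ is either injective, or folds $e$ isometrically onto part of another edge. The key observation is that $q$ identifies points $x\in\alpha$, $y\in\beta$ only when $f|_{[v,x]}=f|_{[v,y]}$ after the isometric identification, so $f_t$ is well-defined and, for a point $\bar z=q(x)=q(y)$, the germ data of $f_t$ at $\bar z$ is inherited from that of $f$ at $x$ (equivalently at $y$).

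First I would establish the edge-wise inequality $S_{PL(f_t),\bar e}\le \Lip(f)$ for every edge $\bar e$ of $A_t$. Pick a preimage edge-segment $e'\subset e$ of $A$ with $q(e')=\bar e$ isometrically; then $l_B(f_t(\bar e))$, the distance between the images of the endpoints of $\bar e$, equals the distance between the images under $f$ of the corresponding endpoints in $A$, which is at most $\Lip(f)\,l_A(e')=\Lip(f)\,l_{A_t}(\bar e)$ by the Remark following the definition of $PL$. Hence $\Lip(PL(f_t))\le\Lip(f)$. This gives the dichotomy: either the inequality is strict, and we are in the first case, or $\Lip(PL(f_t))=\Lip(f)$, and it remains to locate $A_{t\,\max}(PL(f_t))$.

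So suppose $\Lip(PL(f_t))=\Lip(f)$ and let $\bar e$ be an edge of $A_t$ with $S_{PL(f_t),\bar e}=\Lip(f)$. I must show $\bar e\subset q_t(A_{\max}(f))$, i.e. that \emph{every} edge $e$ of $A$ with $q(e)\cap\bar e\ne\emptyset$ (equivalently, every edge mapping onto a segment that overlaps $\bar e$) lies in $A_{\max}(f)$. Run the computation of the previous paragraph in reverse: if $e'\subset e$ is a segment with $q(e')=\bar e$, then $\Lip(f)\,l_{A_t}(\bar e)=l_B(f_t(\bar e))=l_B(f(e'))\le S_{f,e}\,l_A(e')=S_{f,e}\,l_{A_t}(\bar e)$, forcing $S_{f,e}\ge\Lip(f)$, hence $S_{f,e}=\Lip(f)$ and $e\in A_{\max}(f)$. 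Since the folding relation only identifies edges with equal stretching factors $S_\alpha(f)=S_\beta(f)$, the only subtlety is that $q$ might merge a maximally-stretched segment of one edge with a non-maximal portion of another; but because $q|_e$ is a local isometry that is injective on each edge outside the folded locus, and the image $\bar e$ is a single edge of the $PL$-structure on $A_t$, any edge of $A$ contributing to $\bar e$ contributes a full sub-segment of length $l_{A_t}(\bar e)$, so the reverse computation applies uniformly and forces all contributing edges into $A_{\max}(f)$. Therefore $\bar e\subset q_t(A_{\max}(f))$, giving $A_{t\,\max}(PL(f_t))\subseteq q_t(A_{\max}(f))$ in this case.

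The main obstacle I anticipate is bookkeeping the relationship between the simplicial structure on $A_t$ coming from $q_{\tau,t}$ and the one coming from $PL(f_t)$: after folding, the point $p_t$ at parameter $t$ on the folded edges becomes a new vertex, so edges of $A$ may be subdivided, and one must check that the two-sided length computation above is valid edge-by-edge in the \emph{refined} structure. Once one fixes the convention that $A_t$ is given the simplicial structure with $q_t(p_t)$ (and its orbit) adjoined as vertices, each edge of $A_t$ pulls back to a union of full sub-edges of $A$ of equal length, and the argument goes through; I would state this refinement explicitly at the start and then the rest is the routine two-sided inequality.
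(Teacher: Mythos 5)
Your approach is the same as the paper's in outline (transfer stretching factors through the local isometry $q_t$, then pass to $PL$), but there is a gap in the ``bookkeeping'' step that you flag at the end. You claim that once $q_t(p_t)$ is adjoined as a vertex, ``each edge of $A_t$ pulls back to a union of full sub-edges of $A$ of equal length.'' This is false in general: the fold at $v$ lowers the valence of $q_t(v)$ by one, so if $v$ is a free vertex of valence $3$ in $A$, then $q_t(v)$ has valence $2$ in $A_t$, is redundant, and is therefore \emph{not} a vertex of $A_t$ as an element of $\mathcal O$. The resulting edge $\bar e$ of $A_t$ then traverses the image of the third edge $\gamma$ at $v$ \emph{and} the image of the folded segment $[v,p_t]$; there is no single edge $e$ of $A$ containing a sub-segment $e'$ with $q_t(e')=\bar e$. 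Your two-sided equality $l_B(f_t(\bar e))=l_B(f(e'))$ therefore has no candidate $e'$, and both the upper bound $\Lip(PL(f_t))\le\Lip(f)$ and the reverse inequality break down as written.

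The paper sidesteps this by explicitly introducing the $f_t$-pullback structure $\sigma$ on $A_t$ (a refinement in which $q_t(v)$ \emph{is} retained as a $\sigma$-vertex), so that each $\sigma$-edge does pull back isometrically to a sub-segment of a single $A$-edge; this gives $S_\xi(f_t)=S_{q_t^{-1}(\xi)}(f)$ for $\sigma$-edges $\xi$, hence $A^\sigma_{t\,\max}(f_t)=q_t(A_{\max}(f))$ and $\Lip(f_t)=\Lip(f)$. The passage to the genuine $\mathcal O$-structure is then a separate observation, already present in the remark after the definition of $PL$: for an $\mathcal O$-edge $\bar e$, $S_{PL(f_t),\bar e}\,l(\bar e)=l_B(f_t(\bar e))\le\sum_{\xi\subset\bar e}S_\xi(f_t)\,l(\xi)\le\Lip(f)\,l(\bar e)$, a weighted-average bound; and if equality holds then \emph{every} $\sigma$-edge $\xi\subset\bar e$ has $S_\xi(f_t)=\Lip(f)$, so $\bar e\subset q_t(A_{\max}(f))$. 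This extra averaging step, not just the adjunction of $q_t(p_t)$ as a vertex, is what your argument is missing.
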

\proof
Fix $t$. Let  $\sigma$ be the $f_t$-pullback simplicial structure on $A_t$. We write
$A_t^\sigma$ and $A_t$ to distinguish between $\sigma$ and the original simplicial structure of $A_t$.
Note that $f_t$ is then a $\sigma$-PL map. As $q_t$ is a local isometry, for any
edge of $\sigma$ we have $S_{e}(f_t)=S_{q^{-1}_t(e)}(f)$ . In particular, $\Lip(f_t)=\Lip(f)$
and $A^\sigma_{t\,\max}(f_t)=q_t(A_{\max}(f))$.
Now the edge-stretching factors of $PL(f_t)$ are less than or equal to those of $f_t$. Hence $\Lip(PL(f_t))<\Lip(f_t)$ or $A_{t\, \max}(PL(f_t))\subseteq A^\sigma_{t\, \max}(f_t)$.\qed

The following lemma will be useful in the study of train track maps. For $\Phi\in Aut(G,\mathcal
O)$, $\Phi(A)$ and $A$ are the same metric tree with different $G$-action. So
$\Phi(A)_t=\Phi(A_t)$ and we use the same symbol
$q_t$ to denote the quotient map from $\Phi(A)\to\Phi(A_t)$.
We have the following commutative diagram which defines the map $h_t$ (Figure~\ref{fig:ht}).
\begin{figure}[h]
  \centering
$$
\xymatrix{A\ar[r]^f\ar[d]_{q_t}&\Phi(A)\ar[d]^{q_t}\\
A_t\ar[ur]_{f_t}\ar[r]_{h_t}&\Phi(A_t)}
$$

  \caption{The quotient map $h_t$}
  \label{fig:ht}
\end{figure}

\begin{Lem}\label{lem:square}
  Let $\Phi\in\Aut(G,\mathcal O)$ and suppose that $f:A\to \Phi(A)$ is a PL-map such that
$$\Lip(f)=\min\{\Lip(h),\  h:X\to \Phi(X), X\in \mathcal O\}.$$
Let $A_t$ be the tree obtained by perform a local fold directed by $f$. Then
$$\Lip(PL(h_t))=\Lip(PL(f_t))=\Lip(f)$$
and
$$
A_{t\, \max}(PL(h_t)) \subseteq q_t(A_{\max}(f)).
$$
Moreover, if $PL(h_t)$ is not optimal, then
$$
A_{t\, \max}(Opt(PL(h_t)))\subsetneq A_{t\, \max}(PL(h_t)).
$$
\end{Lem}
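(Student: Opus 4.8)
The plan is to read off all the quantitative statements from the commutative square in Figure~\ref{fig:ht}, combined with Lemma~\ref{lem:tensionfold} (applied to both vertical maps) and the minimality hypothesis on $\Lip(f)$. First I would observe that since $q_t:A\to A_t$ is a local isometry, the $\sigma$-PL map $f_t$ has $\Lip(f_t)=\Lip(f)$ and $A^\sigma_{t\,\max}(f_t)=q_t(A_{\max}(f))$, exactly as in the proof of Lemma~\ref{lem:tensionfold}; likewise, since the right-hand $q_t:\Phi(A)\to\Phi(A_t)$ is a local isometry and $h_t$ is defined by $q_t\circ f = h_t\circ q_t$, the map $h_t$ (viewed with the $\sigma$-structure) satisfies $S_e(h_t) = S_{q_t^{-1}(e)}(f)$ for each $\sigma$-edge $e$, hence $\Lip(h_t)=\Lip(f)$ and $A^\sigma_{t\,\max}(h_t)=q_t(A_{\max}(f))$.

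Next I would pass from the $\sigma$-structure to the genuine $PL$-maps. As in the Remark before Theorem~\ref{thm:finfty}, $S_{PL(f_t),e}\le\Lip(f_t)$ for every original edge $e$, so $\Lip(PL(f_t))\le\Lip(f)$; but $A_t=\Phi(A_t)$ up to reindexing the $G$-action, so $PL(f_t)$ and $PL(h_t)$ are $\mathcal O$-maps between two points of $\mathcal O$, and by the minimality hypothesis $\Lip(f)\le\Lip(PL(f_t))$ and $\Lip(f)\le\Lip(PL(h_t))$ (here one uses that $\min\{\Lip(h):h:X\to\Phi(X)\}$ is attained and equals $\Lambda_R(X,\Phi(X))$ for any $X$ realizing it, together with Lemma~\ref{lem:minn} applied to $PL(f_t):A_t\to\Phi(A_t)$). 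Combining the two inequalities gives $\Lip(PL(h_t))=\Lip(PL(f_t))=\Lip(f)$. Since the edge-stretching factors of $PL(h_t)$ do not exceed those of $h_t$, an original edge $e$ of $A_t$ lies in $A_{t\,\max}(PL(h_t))$ only if $S_{h_t,e'}=\Lip(f)$ for some $\sigma$-subedge $e'$ of $e$, i.e. only if $e$ meets $A^\sigma_{t\,\max}(h_t)=q_t(A_{\max}(f))$; a short argument (the tension subgraph is a subcomplex, and $q_t$ maps $A_{\max}(f)$ onto a full subgraph of $A_t^\sigma$) upgrades this to the containment $A_{t\,\max}(PL(h_t))\subseteq q_t(A_{\max}(f))$.

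Finally, the "moreover" clause is immediate from Corollary~\ref{cor:optimalfromf}: $PL(h_t)$ is a $PL$-map that minimizes the Lipschitz constant among $\mathcal O$-maps $A_t\to\Phi(A_t)$ (by the equality just proved and the minimality hypothesis), so if it is not optimal, Corollary~\ref{cor:optimalfromf} produces an optimal map $Opt(PL(h_t))$ with $A_{t\,\max}(Opt(PL(h_t)))\subsetneq A_{t\,\max}(PL(h_t))$. I expect the main obstacle to be purely bookkeeping: keeping straight the three simplicial structures in play (the original structure on $A$, the folded structure on $A_t$, and the pull-back structure $\sigma$), and being careful that "$\min\{\Lip(h):h:X\to\Phi(X),X\in\mathcal O\}$" in the hypothesis is genuinely transported to the new basepoint $A_t$ — this works precisely because $\Phi(A_t)=(\Phi(A))_t$ as trees and the minimum in the displayed hypothesis ranges over all $X\in\mathcal O$, so in particular over $X=A_t$.
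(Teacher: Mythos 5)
Your plan follows the same outline as the paper's proof, and the treatment of the final clause via Corollary~\ref{cor:optimalfromf} is exactly right, but there is a genuine gap in the middle step where you derive the lower bound $\Lip(f)\le\Lip(PL(f_t))$. You write that you would apply Lemma~\ref{lem:minn} ``to $PL(f_t):A_t\to\Phi(A_t)$,'' but that is the wrong target: by construction (the diagram in Figure~\ref{fig:ht}) the map $f_t$ is the diagonal arrow $A_t\to\Phi(A)$, not the bottom arrow to $\Phi(A_t)$. Since $\Phi(A)$ is a fold \emph{away} from $\Phi(A_t)=\Phi(A)_t$, the map $PL(f_t)$ is not of the form $X\to\Phi(X)$ and the displayed minimality hypothesis does not directly bound its Lipschitz constant from below. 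The paper sidesteps this by observing that $PL(h_t)=PL(q_t\circ PL(f_t))$, with the right-hand $q_t:\Phi(A)\to\Phi(A_t)$ a local isometry and hence $1$-Lipschitz, so that $\Lip(PL(h_t))\le\Lip(PL(f_t))$; the minimality hypothesis then legitimately forces $\Lip(PL(h_t))\ge\Lip(f)$ because $PL(h_t)$ really does map $A_t\to\Phi(A_t)$, and the chain of inequalities pins both quantities to $\Lip(f)$. Your argument as written would only recover this if you inserted that factorisation, or alternatively used the directed triangle inequality $\Lambda_R(A_t,\Phi(A_t))\le\Lambda_R(A_t,\Phi(A))\,\Lambda_R(\Phi(A),\Phi(A_t))$ together with $\Lambda_R(\Phi(A),\Phi(A_t))\le1$.

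A secondary point concerns the containment $A_{t\,\max}(PL(h_t))\subseteq q_t(A_{\max}(f))$: your route (passing directly from the $\sigma$-PL tension graph of $h_t$ to that of $PL(h_t)$) is a valid alternative to the paper's route through $PL(f_t)$, but the sentence ``only if $S_{h_t,e'}=\Lip(f)$ for \emph{some} $\sigma$-subedge $e'$'' is too weak to yield the full containment, and the ``short argument'' you gesture at is not the right one. What you actually need is that $S_{PL(h_t),e}$ is bounded above by the length-weighted average of the $S_{h_t,e'}$ over the $\sigma$-subedges $e'$ of $e$ (with equality only when $h_t|_e$ is injective), so $S_{PL(h_t),e}=\Lip(f)$ forces $S_{h_t,e'}=\Lip(f)$ for \emph{every} subedge $e'$, whence $e\subseteq A^\sigma_{t\,\max}(h_t)=q_t(A_{\max}(f))$. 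With those two fixes your proof closes up and is essentially equivalent to the paper's.
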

\proof
Since $q_t$ is a local isometry, then $\Lip(h_t)=\Lip(f_t)=\Lip(f)$. Passing to $PL$ does not
increase the Lipschitz constants, and
by hypothesis $\Lip(f)$ is minimal, thus $$\Lip(PL(h_t))=\Lip(PL(f_t))=\Lip(f).$$ Hence, by
Lemma~\ref{lem:tensionfold} we have that $A_{t\, \max}(PL(f_t))\subseteq
q_t(A_{\max}(f))$.  Now, note that $PL(q_t\circ PL(f_t))=PL(h_t)$. Therefore, since $q_t$ is a
local isometry, we have $A_{t\,\max}(PL(h_t))\subseteq A_{t\ \max}(PL(f_t))$. Whence the second
claim. The last claim is a direct consequence of Corollary~\ref{cor:optimalfromf}.
\qed

\subsection{Folding paths}
Now we describe paths joining any two points of $\mathcal O$ which are geodesics w.r.t. the
metric $d_R$. The procedure is exactly that used in~\cite{FrancavigliaMartino} in the case of
free groups.

First we restrict attention to the special situation where $f:A\to B$ is a map so that
$A_{\max}(f)=A$ and $\Lip(f)=1$.

For a complete simple fold we mean the path obtained by equivariantly folding two edges $\alpha$ and $\beta$ as much as
possible. That is to say, the path $[0,m]\to \mathcal O$
$$t\mapsto A_{\tau,t}$$
where  $\tau=(\alpha,\beta)$, $M=\min\{L_A(\alpha),L_A(\beta)\}$,  $\sim_{\tau,M}$ is not trivial, and
$m$ is the minimum $t$ so that $\sim_{\tau,t}=\sim_{\tau,M}$ .

\begin{Prop}\label{prop:foldingpath}
  Let $A,B\in \mathcal O$ and $f:A\to B$ a PL-map such that $A_{\max}(f)=A$ and
  $\Lip(f)=1$. Then there exists a path from $A$ to $B$ which is a concatenation of complete
  simple folds directed by $f$.
\end{Prop}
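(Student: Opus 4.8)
The plan is to construct the folding path by an inductive/iterative procedure: starting from $A_0 = A$ and $f_0 = f$, repeatedly perform a complete simple fold on a pair of edges of the current tree that are identified by the current map, obtaining $A_1, A_2, \dots$, until no further folding is possible, at which point the accumulated map is a local isometry onto $B$, hence (being surjective and equivariant between minimal trees) an isometry. Concatenating the complete simple folds performed at each stage yields the desired path from $A$ to $B$.

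First I would make precise the invariant maintained along the construction: at each stage we have $A_i \in \mathcal{O}$ and a PL-map $f_i : A_i \to B$ with $\Lip(f_i) = 1$ and $A_{i\,\max}(f_i) = A_i$ (i.e. every edge is stretched by exactly $1$, so $f_i$ is $1$-Lipschitz and locally isometric on edges). By Lemma~\ref{lem:tensionfold}, after a complete simple fold directed by $f_i$ we have either $\Lip(PL((f_i)_t)) < 1$, which is impossible since $l_B(g) \le \Lip((f_i)_t) l_{A_{i,t}}(g)$ forces the stretch to stay at least $1$ on some axis — more carefully, because $f_i$ being $1$-Lipschitz with $A_{i\,\max} = A_i$ means the quotiented map $(f_i)_t$ is still $1$-Lipschitz and still maps every edge isometrically, so $\Lip(PL((f_i)_t)) = 1$ and $A_{i,t\,\max} = A_{i,t}$. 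Thus the invariant is preserved, and $f_{i+1} := PL((f_i)_t)$ is again a PL-map of the required type. This step is essentially a bookkeeping consequence of Lemma~\ref{lem:tensionfold} together with the fact that a nontrivial fold exists as long as $f_i$ is not injective.

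Next I would address termination. A complete simple fold identifies a full orbit of pairs of (initial segments of) edges, and strictly decreases a suitable complexity: for instance the total length (co-volume) of $A_i$ strictly decreases, or the number of $G$-orbits of edges is non-increasing and eventually the simplicial structure stabilizes. The cleanest route is: if $f_i$ is not an isometry, then since it is equivariant, surjective, $1$-Lipschitz and locally isometric on edges but not globally injective, there must be a vertex $v$ and two germs of edges at $v$ on which $f_i$ agrees on an initial segment — this is exactly the data $(v, \tau)$ needed to start a complete simple fold, and such a fold is nontrivial. One must check this non-injectivity forces an overlap of the required kind; the subtlety is ruling out that the only identifications happen "at infinity" or across disjoint parts of the tree, which is handled because the image is a tree and any two points with the same image are joined by a path that must be folded somewhere, so the first place the fold-path leaves the diagonal gives a legitimate local overlap at a vertex. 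Then complete simple folds monotonically reduce co-volume by a definite amount bounded below at each stage relative to the finitely many edge-orbits, or — if co-volume does not go to zero — the process stabilizes to an injective, hence isometric, $f_\infty$; either way we reach $B$ in finitely many (or a convergent sequence of) complete simple folds.

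The main obstacle I expect is the termination/limiting argument: a priori the procedure could require infinitely many complete simple folds whose lengths shrink, so one must argue that the concatenation still converges in $\mathcal{O}$ to $B$ and that the limit path is continuous and exhausts $[0, \ell]$ for the appropriate total parameter length. This is handled exactly as in~\cite{FrancavigliaMartino}: reparametrize the concatenation on a half-open interval, observe that the trees $A_i$ converge in the axes topology (lengths $l_{A_i}(g)$ are monotone and bounded below by $l_B(g)$), that the limit tree has the same length function as $B$ hence equals $B$ by Lemma~\ref{lempaulin}, and that the parametrization extends continuously to the closed interval. Modulo citing this standard limiting argument, the rest is the straightforward induction outlined above using Lemma~\ref{lem:tensionfold}.
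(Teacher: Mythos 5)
Your iterative setup is sound, and your observations that the invariant ($\Lip(f_i)=1$ and $A_{i\,\max}(f_i)=A_i$) is preserved by Lemma~\ref{lem:tensionfold}, and that non-injectivity of an equivariant locally isometric map between trees forces a local overlap at a vertex (hence a nontrivial fold), are both correct and match what the paper implicitly uses. But there is a genuine gap in your termination argument, and you correctly identify where the difficulty lies. Your fallback options -- co-volume decreasing, or stabilisation of the simplicial structure, or a limiting/axes-topology argument -- do not actually establish that the path is a \emph{finite} concatenation of complete simple folds, which is what the statement (and its use in the geodesic construction) needs.

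The missing idea, which is the heart of the paper's proof, is to pass to the pullback simplicial structure $\sigma$ on $A$ induced by $f$, i.e.\ the subdivision for which $f$ sends each $\sigma$-edge isometrically onto an edge of $B$. With this structure in place, if the germs of two $\sigma$-edges at a vertex have the same initial $f$-image, then they map to the \emph{same full edge} of $B$; hence a complete simple fold directed by $f$ always identifies a whole $G$-orbit of pairs of $\sigma$-edges, and consequently strictly decreases the number of $G$-orbits of $\sigma$-edges. Since this number is finite, the induction terminates after finitely many complete simple folds, at which point the residual map admits no fold and is therefore a $G$-equivariant isometry, so the endpoint is $B$. Your proposal never pins down a finite complexity that strictly drops at each step; the $\sigma$-edge count is precisely that complexity, and without it one cannot rule out an infinite sequence of ever-shorter folds, which is why you were driven to the limiting argument. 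That limiting argument can be made to work (and is in the style of the references you cite), but it is a different and more elaborate route than what the statement requires; the $\sigma$-trick makes the whole thing elementary.
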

\proof Let $\sigma$ be the simplicial structure induced on $A$ by $f$, so that $f$ maps
$\sigma$-edges to edges. Note that by
definition of $\sigma$ if the initial segments of two edges of $\sigma$ have the same $f$-image
then the two edges have the same $f$-image. On the other hand, if $f$ admits no simple folds,
then $f$ is a $G$-equivariant isometry from $A$ to $B$, and hence $A=B$. Otherwise,
let $A_t$ be a tree obtained by a complete simple fold.

 Then $f$
splits as
$$\xymatrix{A\ar[r]^f\ar[d]_{q_{t}}&B\\ A_{t}\ar[ur]_{f_{t}}}$$
and $\sigma$ induces a simplicial structure on $A_t$ with fewer orbits of edges. Induction
completes the proof.\qed

Now we come back to the general case.

\begin{Def}[Isometric folding paths]
  Let $A,B\in\mathcal O$ and $f:A\to B$ be a PL-map such that $A_{\max}(f)=A$.  A {\bf
    isometric folding path} from $A$ and $B$, and directed by $f$, is a path obtained as follows.
  \begin{itemize}
\item  First,  rescale the metric on $A$ so that $\Lip(f)=1$ and call that point $A_0$. Note
  that $A_0=\Lip(f)A$.
\item Then, consider a path  $\hat\gamma(t)=A_t$ from $A_0$ to $B$ given by
  Proposition~\ref{prop:foldingpath}, parametrized by $t\in[0,1]$.
\item Finally, rescale $\hat\gamma$ by $\gamma(t)=\hat\gamma(t)/\Lip(f)^{(1-t)}$.
  \end{itemize}
\end{Def}

\begin{Thm}
For any $A,B\in\mathcal{PO}$ there is a $d_R$-geodesic in $\mathcal{PO}$ from $A$ to $B$.
\end{Thm}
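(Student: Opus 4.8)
The plan is to reduce the general statement to the special case handled by Proposition~\ref{prop:foldingpath}, using the optimal maps machinery to gain control of the tension graph, and then to verify that the resulting path is a $d_R$-geodesic by a length computation. First I would take $A,B\in\mathcal{PO}$ and, lifting to $\mathcal O$, normalise both to co-volume one. By Theorem~\ref{thm:finfty} together with Corollary~\ref{cor:optimalfromf}, I would pick an optimal map $f\colon A\to B$ realising $\Lip(f)=\Lambda_R(A,B)$. The difficulty is that an optimal map need not satisfy $A_{\max}(f)=A$, so Proposition~\ref{prop:foldingpath} does not apply directly; this is the main obstacle. The standard workaround (as in~\cite{FrancavigliaMartino}) is to run an \emph{isometric folding path} directed by $f$: one only ever folds pairs of edges inside the current tension graph, and one keeps re-optimising so that the tension graph stays equal to the whole current tree after rescaling. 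Concretely, at each stage one has a PL-map from the current tree $A_t$ to $B$; one folds a complete simple fold among maximally-stretched edges (restricting the fold relation to $A_{t\,\max}$), passes to $PL$, and applies $Opt$; by Lemma~\ref{lem:square}-type reasoning (here in the simpler form of Lemma~\ref{lem:tensionfold} and Corollary~\ref{cor:optimalfromf}) either the Lipschitz constant strictly drops or the number of orbits of edges in the tension graph strictly drops, so after finitely many folds the tension graph is all of the current tree. At that point Proposition~\ref{prop:foldingpath} applies and produces the remainder of the path to $B$ as a concatenation of complete simple folds, which after the rescaling in the definition of isometric folding path we call $\gamma(t)$, $t\in[0,1]$, with $\gamma(0)=A$ and $\gamma(1)=B$.

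The second half of the argument is to show that $\gamma$ is a $d_R$-geodesic in $\mathcal{PO}$. The key fact is that each complete simple fold directed by $f$ is \emph{isometric} on the tension graph: the quotient map $q_{\tau,t}\colon A_s\to A_t$ is a local isometry, so for an element $g\in G$ whose axis in $A_s$ is tight (which exists by Theorem~\ref{thm_optimal_map} since the map is optimal), the axis is carried isometrically, giving $l_{A_t}(g)=l_{A_s}(g)$ before rescaling and hence $l_{A_t}(g)/l_{A_s}(g)=\Lambda_R(A_s,A_t)$ — so along the path the right stretch is exactly realised by the uniform rescaling factor $\Lip(f)^{(t-s)/(1-s)}$ built into the definition. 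Unwinding the rescaling normalisation, one computes for $0\le s\le t\le 1$ that $d_R(\gamma(s),\gamma(t)) = (t-s)\log\Lip(f) = (t-s)\,d_R(A,B)$, where the upper bound $d_R(\gamma(s),\gamma(t))\le \Lip$ of the composed fold-and-rescale map comes from Lemma~\ref{lem:minn}, and the matching lower bound comes from the tight axis of the periodic element produced by Theorem~\ref{thm_optimal_map}. Summing these equalities and using the triangle inequality for $d_R$ gives $d_R(A,B) = \sum d_R(\gamma(t_i),\gamma(t_{i+1}))$ for any partition, which is precisely the statement that $\gamma$ is a $d_R$-geodesic.

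The step I expect to be genuinely delicate is the bookkeeping that keeps the tension graph equal to the whole tree while folding: one must check that restricting the folding relation $\sim_{\tau,t}$ to edges of $A_{\max}$ and interleaving folds with applications of $PL$ and $Opt$ still terminates (finiteness of orbits of edges is the termination measure) and still yields a genuine path in $\mathcal O$ rather than merely a discrete sequence of trees — i.e.\ that the complete simple folds can be concatenated continuously. The second delicate point is that the vertex-set/branch-point conventions are preserved: a complete simple fold can create redundant vertices or change which points are branch points, so one should remark (as is done implicitly in~\cite{FrancavigliaMartino}) that one suppresses redundant vertices after each fold, landing back in $\mathcal O$ as defined by $(C0)$–$(C3)$. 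Modulo these routine but slightly tedious verifications, the theorem follows from Proposition~\ref{prop:foldingpath}, Theorem~\ref{thm_optimal_map}, and Lemma~\ref{lem:minn} exactly as in the free-group case.
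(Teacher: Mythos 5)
Your second paragraph (the geodesic certification, based on a periodic element $g$ with tight axis whose translation length is carried isometrically through each complete simple fold, plus the scale-invariance remark) lines up with the paper's argument. The gap is in your first paragraph, specifically in how you propose to reduce to the case $A_{\max}(f)=A$ so that Proposition~\ref{prop:foldingpath} applies.

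You suggest repeatedly folding inside the current tension graph and then re-PL-ising and applying $Opt$, arguing via Lemma~\ref{lem:tensionfold} and Corollary~\ref{cor:optimalfromf} that ``either the Lipschitz constant strictly drops or the number of orbits of edges in the tension graph strictly drops, so after finitely many folds the tension graph is all of the current tree.'' The last clause does not follow from the two alternatives. Lemma~\ref{lem:tensionfold} gives $A_{t\,\max}(PL(f_t))\subseteq q_t(A_{\max}(f))$ and Corollary~\ref{cor:optimalfromf} (applied when $PL(f_t)$ is not optimal) only ever \emph{shrinks} the tension graph further. Iterating shrinks the tension graph or drops $\Lip$; neither alternative ever enlarges the tension graph, and once $\Lip$ reaches the minimum possible value $\Lambda_R(\cdot,B)$ it cannot drop further. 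So this loop stabilises at an optimal map with some tension graph, which is exactly where you started, not at a map with full tension graph. (Lemma~\ref{lem:square}, which you invoke as analogous reasoning, is about the train-track setting where $\Lip(f)$ is the global infimum $\lambda_\Phi$; there the conclusion is that the tension graph is $f$-invariant and hence, by irreducibility, the whole tree. No such irreducibility is available between two arbitrary trees $A,B$.)

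What the paper (and \cite{FrancavigliaMartino}) actually does to arrange $A_{\max}=A$ is not a folding step at all but a linear rescaling: replace each edge $e$ of $A$ by an edge of length $S_{f,e}\,l_A(e)$, obtaining a tree $A_0$ with a quotient map $p:A\to A_0$ and a map $f_0:A_0\to B$ with $f=f_0\circ p$. Every edge of $A_0$ is then stretched by $f_0$ by the same factor, so $A_{0\,\max}(f_0)=A_0$ and (after normalising) $\Lip(f_0)=1$. Doing the edge-length change continuously gives the segment $\gamma_1$ of the geodesic; along it the candidate $g$ realises the right stretch. One also has to note, as the paper does, that $p$ is a homeomorphism on $A_{\max}(f)$ (edges with $S_{f,e}=0$ lie outside $A_{\max}$), so the tight axis of $g$ from Theorem~\ref{thm_optimal_map} remains tight in $A_0$; this is what lets you run your length computation on $\gamma_2=$ the isometric folding path from $A_0$ to $B$ produced by Proposition~\ref{prop:foldingpath}. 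Your second-paragraph argument then goes through essentially as you wrote it. So the missing idea is the rescaling path $\gamma_1$; the fold-and-reoptimise loop you substituted for it does not converge to the desired configuration.
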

\proof
We use the following characterization of (unparameterized) geodesics: an oriented path $\gamma$ in $\mathcal{PO}$ is a
$d_R$-geodesics if and only if there is a $g\in G$ hyperbolic so that
$$\Lambda_R(C,D)=\frac{l_D(g)}{l_C(g)}$$
for any $C<D\in\gamma$.

We now describe a path in $\mathcal O$ which projects to a $d_R$-geodesics in $\mathcal{PO}$.

Let $f:A\to B$ be an optimal map and let $g\in G$ be an element with tight axis in $A$.
(Theorem~\ref{thm_optimal_map}). First we equivariantly rescale all the edges of $A$ as follows
$$e\mapsto S_e(f)e$$
obtaining a tree $A_0$. Call $p$ the projection map $p:A\to A_0$ clearly $f$ splits as
$$\xymatrix{A\ar[r]^f\ar[d]_{p}&B\\ A_{0}\ar[ur]_{f_{0}}}$$
Note that if $S_e(f)=0$ for some edge $e$, than $f$ collapses $e$, so the  tree $A_0$ is
actually in $\mathcal O$.  Moreover, the map   $f_0:A_0\to B$ is PL and has  the property  that
$A_{0\,\max}(f_0)=A_0$ and $\Lip(f_0)=1$. Finally, note that if $S_e(f)=0$ then $e\notin
A_{\max}(f)$, therefore $p$ restricts to a homeomorphism from $A_{\max}(f)$ to its image. In
particular, this implies that $g$ has tight axis in $A_0$.

We do this operation continuously so that we have an oriented path $\gamma_1$ from $A$ to a $A_0$.
It is clear that for any two points $C,D$ in $\gamma_1$
$$\Lambda_R(C,D)=\frac{l_D(g)}{l_C(g)}.$$

Let $\gamma_2$ be an isometric folding path from $A_0$ to $B$ directed by $f_0$.
Define $\gamma$ to
be the concatenation of $\gamma_1$ and $\gamma_2$.

Since $g$ has a tight axis in $A_0$, by definition $f_0$ is
injective on $\axis_{A_0}(g)$. Therefore, $\axis_{A_0}(g)$ is never folded in $\gamma_2$. In
particular, $l_{\gamma_2(t)}(g)$ is constant. Since $\gamma_2$ is an isometric folding path, nothing is
is stretched by a factor $\geq 1$, hence $g$
realizes $\Lambda_R(C,D)=1$ for any two ordered points in $\gamma_2$.

This implies that for any $C<D$ in $\gamma$
$$\Lambda_R(C,D)=\frac{l_D(g)}{l_C(g)}.$$

Finally, note that the above condition is scale invariant, in the sense that if
$$\Lambda_R(C,D)=\frac{l_D(g)}{l_C(g)}$$
then
$$\Lambda_R(\lambda C,\mu D)=\frac{l_{\mu D}(g)}{l_{\lambda C}(g)}$$
for any $\lambda,\mu>0$.

Therefore the projection of $\gamma$ to the set of co-volume one elements of $\mathcal O$ is an
(unparameterized) $d_R$-geodesic.\qed


\section{Train tracks}\label{sectionTT}
 In this section we prove that any irreducible automorphism in $\Aut(G,\mathcal O)$
 is represented
  by a train track map (see below the definitions). We follow the approach {\em \`a la Bers}
  of M. Bestvina (\cite{BestvinaBers}). The following arguments are restatement of those for the case of free
  groups. In fact, the proofs do not require adjustment due to the fact that we are allowing
  non trivial stabilizer for vertices, and one could just say that the theory of train tracks
  for free groups passes to the case of free products without any substantial change. We refer
  to~\cite{BFH1,BFH2,BFH3,BFH4,BestvinaHandel} for the train track theory for free groups.
However,
  there are many facts that are well-known for free groups, at least to the
  experts, but for which there is no reference in literature.  We take the occasion of the
  present discussion on train tracks for free product to give explicit statements and metric
  proofs  of some of these facts, as for instance the relations between the minimal displaced
  set and the set of train tracks. (See also~\cite{HM,MP}).

\subsection{Irreducibility and minimal displacement trichotomy}
Let $\Phi\in\Aut(G,\mathcal O)$. It acts on $\mathcal
  O$ by changing the marking.
Define $$\lambda_\Phi=\inf_{X\in\mathcal O}\Lambda_R(X,\Phi(X)).$$

Note that since both $X$ and $\Phi(X)$ have the same volume, the number $\lambda_{\Phi}$ cannot be smaller than one.
Therefore, there are three cases: $\Phi$ is {\em elliptic}, if
$\log \lambda_\Phi$ is zero and the infimum is attained;
{\em parabolic}, if the infimum is not attained; {\em hyperbolic} if $\log \lambda_\Phi$ is positive and
attained.

For $T\in \mathcal O$ we say that a Lipschitz surjective map $f:T\to
T$ represents $\Phi$ if for any
$g\in G$ and $t\in T$ we have $f(gt)=\Phi(g)(f(t))$. (In other words, if
it is an $\mathcal O$-map from $T$ to $\Phi(T)$.)

\begin{Def}
We say $\Phi\in\Aut(G,\mathcal O)$ is $\mathcal O$-irreducible (or simply irreducible for
short) if for any $T\in O$ and for any $f:T\to T$
representing $\Phi$, if $W\subset T$ is a proper $f$-invariant $G$-subgraph then
$\quot GW$ is a union of trees each of which contains at most one non-free vertex.
\end{Def}

This is related to an algebraic definition of irreducibility as follows. Suppose that $G$ can be written as a free product,
$G=G_1*G_2* \ldots G_k*G_{\infty}$, where we allow the possibility that $G_{\infty}$ is trivial.
Then we say that the set $\mathcal{G}=\{ [G_i] \ : \ 1 \leq i \leq k \}$ is a {\em free factor system} for $G$,
where $[G_i]= \{ g G_i g^{-1} \ : \ g \in G \}$ is the set of conjugates of $G_i$. Given two free factor systems
$\mathcal{G}=  \{ [G_i] \ : \ 1 \leq i \leq k \}$ and $\mathcal{H} = \{ [H_j] \ : \ 1 \leq j \leq m \}$, we write
$\mathcal{G} \sqsubseteq \mathcal{H} $ if for each $i$ there exists a $j$ such that $G_i \leq g H_j g^{-1}$ for some
$g \in G$. We write $\mathcal{G} \sqsubset \mathcal{H} $ if one of the previous inclusions is strict. We also say that
$\mathcal{G}=\{ [G_i] \ : \ 1 \leq i \leq k \}$ is proper if $\mathcal{G} \sqsubset \{ [G] \}$.

We say that $\mathcal{G}=\{ [G_i] \ : \ 1 \leq i \leq k \}$  is $\Phi$ invariant for some
$\Phi \in Out(G)$ if for each $i$, $\Phi([G_i]) = [G_j]$ for some $j$. We shall restrict our attention to those free factor systems
$\mathcal{G}$ such that $\{ [H] \} \sqsubseteq \mathcal{G}$ whenever $H$ is a free factor which is {\em not} a free group. In particular,
this means that $G=G_1*G_2* \ldots G_k*G_{\infty}$, and $G_{\infty} \cong F_k$ for some free group $F_k$.  Associated to such a free factor system
$\mathcal{G}=\{ [G_i] \ : \ 1 \leq i \leq k \}$ we have the space of trees $\mathcal{O}=\mathcal{O}(G,(G_i)_{i=1}^p,F_k)$ and any (outer)
automorphism of $G$ leaving $\mathcal{G}$ invariant will act on $\mathcal{O}$ in the usual way.

\begin{Def}
Let $\mathcal{G}$ be a free factor system of $G$ as above and suppose it is $\Phi$ invariant for some $\Phi \in Out(G)$.
Then $\Phi$ is called irreducible relative to $\mathcal G$ if $\mathcal{G}$ is a maximal (under $\sqsubseteq$) proper, $\Phi$-invariant free factor system.
\end{Def}

The following is clear.

\begin{Lem}
Suppose $\mathcal{G}$ is a free factor system of $G$ with associated space of trees $\mathcal
O$, and further suppose that $\mathcal G$ is $\Phi$-invariant.
Then $\Phi$ is irreducible relative to $\mathcal G$ if and only if $\Phi$ is $\mathcal O$-irreducible.
\end{Lem}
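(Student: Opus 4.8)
The plan is to prove both implications directly from the two definitions. Recall the setup: we are given a free factor system $\mathcal{G} = \{[G_i]\}$ of $G$, its associated space of trees $\mathcal{O} = \mathcal{O}(G,(G_i)_{i=1}^p, F_k)$, and we assume $\mathcal{G}$ is $\Phi$-invariant. We want to show that $\mathcal{G}$ being a maximal proper $\Phi$-invariant free factor system is equivalent to the property that for every $T \in \mathcal{O}$ and every $f : T \to T$ representing $\Phi$, every proper $f$-invariant $G$-subgraph $W \subset T$ has the property that $\quot{G}{W}$ is a union of trees each containing at most one non-free vertex.

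The key translation device is the standard correspondence between $G$-subgraphs (or more precisely the free factor systems they carry) and free factor systems of $G$. First I would record the following dictionary. If $W \subset T$ is a $G$-invariant subforest, its components are permuted by $G$; the stabilizer of a component $W_0$ is a subgroup $H$ of $G$, and since edge stabilizers are trivial and vertex stabilizers are conjugates of the $G_i$, the subgroup $H$ is a free product of some conjugates of various $G_i$ with a free group — hence $H$ is a free factor of $G$, and the collection of conjugacy classes of these component stabilizers forms a free factor system $\mathcal{G}(W) \sqsubseteq \mathcal{G}$. The condition "$\quot{G}{W}$ is a union of trees each with at most one non-free vertex" is precisely the condition that each component of $\quot{G}{W}$ is contractible with at most one non-free vertex, which translates to: each component stabilizer is trivial or conjugate into a single $G_i$, i.e. $\mathcal{G}(W) \sqsubseteq \mathcal{G}$ with the components carrying no \emph{new} free factor information — equivalently, $\mathcal{G}(W)$ is "trivial relative to $\mathcal{G}$" (the subgraph carries no proper free factor strictly between the peripheral structure and $\{[G]\}$). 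So $W$ being a proper $f$-invariant $G$-subgraph that \emph{violates} the conclusion corresponds exactly to $W$ carrying a $\Phi$-invariant free factor system $\mathcal{H}$ with $\mathcal{G} \sqsubset \mathcal{H}$, $\mathcal{H}$ proper (since $W \subsetneq T$ and $\quot{G}{T}$ is connected with fundamental group realizing all of $G$ up to the peripheral structure). The $\Phi$-invariance of $\mathcal{H}$ comes from $f$-invariance of $W$ together with $f$ representing $\Phi$.

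With this dictionary in hand, the two directions are:

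\emph{Not $\mathcal{O}$-irreducible $\Rightarrow$ not irreducible relative to $\mathcal{G}$.} Suppose some $f : T \to T$ representing $\Phi$ has a proper $f$-invariant $G$-subgraph $W$ with $\quot{G}{W}$ not a disjoint union of trees each with $\le 1$ non-free vertex. By the dictionary, $W$ determines a $\Phi$-invariant free factor system $\mathcal{H}$ with $\mathcal{G} \sqsubset \mathcal{H} \sqsubset \{[G]\}$; this witnesses that $\mathcal{G}$ is not maximal.

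\emph{Not irreducible relative to $\mathcal{G}$ $\Rightarrow$ not $\mathcal{O}$-irreducible.} Suppose there is a proper $\Phi$-invariant free factor system $\mathcal{H}$ with $\mathcal{G} \sqsubset \mathcal{H}$. I would build a $T \in \mathcal{O}$ and an $f : T \to T$ representing $\Phi$ that has an invariant subgraph witnessing $\mathcal{H}$. Concretely: realize $\mathcal{H}$ by a $G$-tree in which the components of some invariant subforest $W$ have stabilizers the representatives of $\mathcal{H}$; one can arrange such a tree in $\mathcal{O}$ (using that $\mathcal{G} \sqsubseteq \mathcal{H}$ means the $G_i$ sit inside the $H_j$'s). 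Since $\mathcal{H}$ is $\Phi$-invariant, $\Phi$ maps the collection $\{[H_j]\}$ to itself, so one can choose a representative $f$ of $\Phi$ on $T$ (using Lemma~\ref{lem:Omapsexists} to produce $\mathcal{O}$-maps, then homotoping to preserve $W$) with $f(W) \subseteq W$. Because $\mathcal{H}$ is proper, $W$ is a proper subgraph; because $\mathcal{G} \sqsubset \mathcal{H}$, some component of $\quot{G}{W}$ is not a tree with at most one non-free vertex. Hence $\Phi$ is not $\mathcal{O}$-irreducible.

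I expect the main obstacle to be making the second direction fully rigorous: one must (a) check that a $G$-tree realizing the intermediate free factor system $\mathcal{H}$ actually lives in $\mathcal{O}$ — i.e. still has exactly the $G_i$ as vertex groups and trivial edge groups, with the $H_j$ appearing only as stabilizers of invariant \emph{subtrees} rather than of vertices — and (b) produce an actual $f$-invariant subgraph rather than merely an invariant subtree up to homotopy, which requires a small argument replacing $f$ by a homotopic map sending $W$ into $W$ (standard once $\Phi$ permutes the conjugacy classes $[H_j]$, but it needs to be said). The forward direction, by contrast, is essentially a bookkeeping exercise once the dictionary between invariant subgraphs and sub–free-factor-systems is set up. I would therefore devote most of the write-up to the dictionary and to the construction in direction two, and keep direction one brief.
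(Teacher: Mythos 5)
The paper offers no proof at all for this statement---the text simply declares ``The following is clear''---so there is no authorial argument to compare against; your job is to confirm the dictionary the authors had implicitly in mind, and your sketch does exactly that. The translation you set up (component stabilizers of a $G$-invariant subforest form a free factor system; ``union of trees each with at most one non-free vertex'' corresponds to those stabilizers being trivial or conjugate into a single $G_i$) is the right one, and both implications do come down to it.

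Two small gaps, both easily repaired. First, in the forward direction you assert that a bad $W$ produces $\mathcal{H}$ with $\mathcal{G}\sqsubset\mathcal{H}$, but the component stabilizers of $W$ need not contain all of $\mathcal{G}$: the subgraph $W$ may miss some orbits of non-free vertices entirely, in which case the corresponding $[G_i]$ do not appear among the component stabilizers and $\mathcal{G}\sqsubseteq\mathcal{G}(W)$ fails. The fix is to replace $W$ by $W'=W\cup\{\text{non-free vertices of }T\}$; this stays $f$-invariant (any $\mathcal{O}$-map sends non-free vertices to non-free vertices, cf.\ Lemma~\ref{lem:Omapsexists}), remains proper (removing a vertex from a subgraph also removes its incident edges, so adding back vertices does not restore $T$), and now $\mathcal{G}\sqsubseteq\mathcal{G}(W')$ while the ``bad'' component still forces strict containment. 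Second, $f$-invariance of $W$ and $\Phi$-equivariance of $f$ give you directly only $\Phi(\mathcal{H})\sqsubseteq\mathcal{H}$, not equality; to upgrade this to $\Phi$-invariance you need the observation that a strict inclusion would produce an infinite strictly decreasing chain $\mathcal{H}\sqsupset\Phi\mathcal{H}\sqsupset\Phi^2\mathcal{H}\sqsupset\cdots$, impossible by finiteness of (reduced) Kurosh rank. With those two remarks, and the blow-up construction you outline for the converse (plus the homotopy adjustment of $f$ you flag in point (b), which is indeed needed and standard), the sketch is complete.
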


We prove now  that irreducible automorphisms are hyperbolic.

\begin{Thm}
For any irreducible $\Phi\in\Aut(G,\mathcal O)$, $\lambda_\Phi=\inf_{X\in\mathcal O}\Lambda_R(X,\Phi(X))$ is a minimum and obtained for some $X \in {\mathcal O}$.
\end{Thm}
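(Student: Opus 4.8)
The plan is to show that the infimum defining $\lambda_\Phi$ is attained, by taking a minimizing sequence $X_n\in\mathcal O$ and producing a limit point that realizes the minimum. By the action of the scaling and the fact that $\Aut(G,\mathcal O)$ acts by isometries, we may normalize the $X_n$ to have co-volume one and, if $\lambda_\Phi>0$ were already known, life would be easier; but a priori we must rule out that the minimizing sequence escapes to the ``boundary'' of $\mathcal O$ (degenerating a proper $G$-invariant subgraph). The key point where irreducibility enters is exactly here: if along the sequence some proper $f_n$-invariant subgraph were getting short, irreducibility would force that subgraph to be a union of trees with at most one non-free vertex each, and such a subgraph can be removed/rescaled without affecting (or while decreasing) the stretching factor, contradicting minimality. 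So the structure is: (i) set up a minimizing sequence of optimal maps $f_n:X_n\to\Phi(X_n)$ with $\Lip(f_n)\to\lambda_\Phi$, using Theorem~\ref{thm_optimal_map} and Corollary~\ref{cor:optimalfromf}; (ii) extract a geometric/axes limit; (iii) argue the limit lies in $\mathcal O$ (no collapsing) using irreducibility; (iv) conclude that the limiting map realizes $\lambda_\Phi$, so the infimum is a minimum.

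\textbf{Details of the steps.} First I would fix, for each $n$, an optimal map $f_n\colon X_n\to\Phi(X_n)$ with $\Lip(f_n)=\Lambda_R(X_n,\Phi(X_n))\to\lambda_\Phi$, normalizing $\operatorname{covol}(X_n)=1$. By Theorem~\ref{thm_optimal_map} there is $g_n\in G$ with tight axis realizing $\Lip(f_n)=l_{\Phi(X_n)}(g_n)/l_{X_n}(g_n)$. If some edge-length of $X_n$ stays bounded below and the simplicial type is eventually constant (finitely many orbits of edges, so finitely many simplicial types), one passes to a subsequence with fixed underlying graph and convergent edge lengths; the limit $X_\infty$ has all edge lengths positive, hence lies in $\mathcal O$, and by continuity of translation-length functions (the axes topology) and Lemma~\ref{lem:Omapsexists}/Theorem~\ref{thm:finfty} one gets an optimal map $f_\infty\colon X_\infty\to\Phi(X_\infty)$ with $\Lip(f_\infty)\le\liminf\Lip(f_n)=\lambda_\Phi$, whence equality and the infimum is attained. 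The real work is to handle the case where this fails, i.e.\ some subset of edge-lengths of $X_n$ tends to $0$ (after renormalizing so that the longest edge stays order $1$). In that case the ``short'' part limits onto a proper $G$-invariant subgraph $W$ of a limiting tree, and one checks $W$ is $f$-invariant for the limiting map; irreducibility then says $\quot GW$ is a union of trees each with at most one non-free vertex. Such a $W$ carries no hyperbolic conjugacy class whose length governs the stretch, and one can rescale the complementary edges to push the metric off the degenerate locus, producing $X_n'$ with $\Lambda_R(X_n',\Phi(X_n'))\le\Lambda_R(X_n,\Phi(X_n))$ but bounded geometry, reducing to the previous case. This is the standard ``irreducible $\Rightarrow$ no loss of mass to a reducing subgraph'' argument.

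\textbf{Main obstacle.} The hard part is the non-compactness of $\mathcal O$: a minimizing sequence need not sub-converge in $\mathcal O$, and one must show that the only obstruction to convergence --- collapsing a proper invariant subgraph --- is incompatible with irreducibility and minimality. Concretely, the delicate point is to identify the degenerating subgraph as an $f$-\emph{invariant} subgraph for a limiting (possibly non-$\mathcal O$) map, so that the definition of $\mathcal O$-irreducibility applies; this requires carefully passing the maps $f_n$ to a limit on the surviving part of the tree, controlling the images of the collapsing edges (they can only move a bounded amount since $\Lip(f_n)$ is bounded), and using that a subgraph whose quotient is a forest with $\le 1$ non-free vertex per component contributes nothing to $\Lambda_R$ (it contains no hyperbolic element and supports no legal-loop of the train-track structure). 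Once that reduction is in place, the extraction of a genuine limit in $\mathcal O$ and the semicontinuity $\Lip(f_\infty)\le\lambda_\Phi$ are routine given Theorem~\ref{thm:finfty} and Lemma~\ref{lempaulin}, and combined with $\Lip(f_\infty)\ge\lambda_\Phi$ by definition of the infimum this yields that $\lambda_\Phi$ is a minimum, attained at $X_\infty\in\mathcal O$.
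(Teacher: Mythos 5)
Your outline misses two essential ingredients, and the ``degenerate case'' step does not hold as stated. First, in the bounded-geometry case, ``fixed underlying graph and convergent edge lengths'' does \emph{not} yield a limit in $\mathcal O$: the marked simplices of $\mathcal O$ are infinite in number, and a minimizing sequence can stay in the thick part while its markings drift off to infinity. One must normalize by translating with $\Psi_k\in\Aut(G,\mathcal O)$ (co-compactness of the thick part), which costs you a changing conjugate $\Psi_k\Phi\Psi_k^{-1}$; the paper then uses that the set of translation lengths in the limit tree $\Gamma_\infty$ is discrete together with the Sausages Lemma (Theorem~\ref{sausages}: the distance is realized on a finite list of candidate translation lengths) to deduce that the sequence $d_R(\Gamma_\infty,\Psi_k\Phi\Psi_k^{-1}\Gamma_\infty)\to\log\lambda_\Phi$ must actually attain the value $\log\lambda_\Phi$, so $X=\Psi_k^{-1}\Gamma_\infty$ realizes the infimum. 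Without this discreteness argument, taking a limit in the thick part gives nothing.

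Second, your degenerate-case analysis is internally inconsistent. If the sequence leaves every thick part, the degenerating subgraph $W$ is (or contains) the union of axes of short hyperbolic elements; this subgraph \emph{does} contain loops and \emph{does} carry hyperbolic conjugacy classes, precisely because edges of loops are getting short. Therefore ``$W$ is a forest with $\le 1$ non-free vertex per component and carries no hyperbolic class'' is false, and the proposal to ``rescale away $W$'' cannot be carried out while keeping a $G$-tree in $\mathcal O$ (collapsing $W$ would kill the translation length of hyperbolic elements). The correct use of irreducibility here is opposite: one shows (via the chain $\Gamma^{\epsilon_0}\supseteq\dots\supseteq\Gamma^{\epsilon_B}$ of $\epsilon$-short sub-forests, whose length is uniformly bounded, and the observation that an optimal $f$ sends $\Gamma^{\epsilon_i}$ into $\Gamma^{\epsilon_{i-1}}$) that some non-trivial, loop-carrying subgraph is $f$-invariant, hence $\Phi$ is \emph{reducible}. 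That yields the contrapositive. Your reading --- irreducibility forces the thin part to be a forest, then rescale it away --- confuses a hypothesis with a conclusion: irreducibility rules the thin case out rather than cleaning it up.
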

\proof We essentially follow the proof in \cite{BestvinaBers}, noting that the technical difficulties arise due to the fact that our space is not locally compact and
that our action is not proper. We shall utilise the Sausages Lemma result, Theorem~\ref{sausages}, which is proved in the subsequent section, but whose proof is
independent on the results in this section.

In order to proceed, we demonstrate the contrapositive, that an automorphism,
$\Phi\in\Aut(G,\mathcal O)$, for which  $\lambda_\Phi=\inf_{X\in\mathcal O}\Lambda_R(X,\Phi(X))$ is not a minimum is reducible.
So we suppose that  $\Gamma_k \in \mathcal O$ is a minimising sequence for $\Phi$. That is, $\lim_{k \to \infty} d_R(\Gamma_k, \Phi\Gamma_k) \to \log \lambda_\Phi$.

We notice that $d_R(\Gamma,\Phi\Gamma)$ is scale-invariant as a function of $\Gamma$, and hence
descends to a function on $\mathcal{PO}$. For the remaining part of this proof, we work with the
co-volume one slice of $\mathcal O$, which we still denote by $\mathcal O$ for simplicity of notation.

Our first step is to show that the trees $\Gamma_k$ cannot stay in the `thick' part of
$\mathcal O$. The $\epsilon$-thick part of (the co-volume one slice of) $\mathcal O$ consists of all trees $X \in \mathcal O$
such that  $l_X(g) \geq \epsilon$ for all hyperbolic $g \in G$. Note that the $\epsilon$-thick
part of $\mathcal O$ is co-compact  for any $\epsilon >0$.

More precisely, we wish to show that only finitely many of the $\Gamma_k$ lie in the $\epsilon$-thick part of $\mathcal O$ for any $\epsilon > 0$. For suppose not, then passing to a subsequence we may assume that
all $\Gamma_k$ belong to the $\epsilon$-thick part and then, again by taking subsequences and
invoking co-compactness, we may find $\Psi_k \in  \Aut(G,\mathcal O)$ such that
$\Psi_k(\Gamma_k)$ converges to some $\Gamma_{\infty}$ which is again in the $\epsilon$-thick
part of $\mathcal O$.
Hence $d_R(\Gamma_{\infty}, \Psi_k \Phi \Psi_k^{-1} \Gamma_{\infty}) \to \log \lambda_\Phi$.

However, note that as we are dealing with simplicial trees, the translation lengths of the elements in a given tree form a discrete set. In fact, the set
$\{ l_{ \Psi(\Gamma_{\infty})}(g) \ : \ g \in G \}$ is the {\em same} discrete set for any $\Psi \in Aut(G, \mathcal O)$.  Moreover, by Theorem~\ref{sausages}, $d_R(\Gamma_{\infty}, \Psi_k \Phi \Psi_k^{-1} \Gamma_{\infty})$ are given by the quotient of the translation lengths of candidates, and there are only finitely many possible lengths of candidates in $\Gamma_{\infty}$ (even though there will, in general, be infinitely many candidates) and therefore the distances $d_R(\Gamma_{\infty}, \Psi_k \Phi \Psi_k^{-1} \Gamma_{\infty})$ also form a discrete set. Hence, there must exist some $k$ (in fact infinitely many) such that $d_R(\Gamma_{\infty}, \Psi_k \Phi \Psi_k^{-1} \Gamma_{\infty})=\log \lambda_\Phi$, whence we obtain that $\lambda_\Phi=\Lambda_R(X,\Phi(X))$, where $X=\Psi_k^{-1} \Gamma_{\infty}$.

Now for any $\Gamma \in \mathcal O$, we let $\Gamma^{\epsilon}$ be the sub-forest obtained as
the union of all the hyperbolic axes of elements of $G$ whose translation length is less than
$\epsilon$, along with all the vertices. Since there are only finitely many graphs of groups
arising from $\mathcal O$, each of which is finite, there exists an $\epsilon$ such that
for all $\Gamma$,  $\Gamma^{\epsilon}$ is a proper sub-forest of $\Gamma$ (we remind that we are now
working with co-volume one trees). Call this $\epsilon_0$.

Also notice that each such sub-forest is a $G$-invariant subgraph, and hence there is a bound
on the length of any proper chain of such sub-forests. Call this number $B$.

Now let $\epsilon_i=\epsilon_0/ (\lambda_\Phi+1)^i$. Choose $\Gamma=\Gamma_k$ as above such that $d_R(\Gamma, \Phi \Gamma) < \log (\lambda_\Phi
+1)$ and $\Gamma$ not in the $\epsilon_B$-thick part (so $\Gamma^{\epsilon_B}$ is non-trivial). Now,

$$
\Gamma\neq\Gamma^{\epsilon_0} \supseteq \Gamma^{\epsilon_1} \supseteq \ldots \supseteq \Gamma^{\epsilon_B}$$
is a chain of $(B+1)$ non-trivial sub-forests of $\Gamma$. Therefore they cannot all be distinct. However, if $f: \Gamma \to \Phi \Gamma$ is any optimal map, then $f$ must send $\Gamma^{\delta_i}$ into $\Gamma^{\delta_{i-1}}$. Hence, we must have an $f$-invariant subgraph of $\Gamma$ which is non-trivial, and hence $\Phi$ is reducible.
\qed

Hence from now on we will use the fact that all our  $\mathcal O$-irreducible elements of $\Aut(G,\mathcal O)$ are hyperbolic.

\subsection{Minimally displaced points}
For an irreducible automorphism $\Phi$ we introduce the set of minimally displaced points,
which plays the role of a ``metric'' axis for $\Phi$. We will show later that this coincides
with the set of train tracks.

 \begin{Def}[Minimal displaced set]
      Let $\Phi$ be an $\mathcal O$- irreducible
  element of $\Aut(G,\mathcal O)$. We define the minimal displaced set of $\Phi$ by
$$\mathcal M(\Phi)=\{T\in \mathcal O\ :\ \Lambda_R(T,\Phi (T))=\lambda_\Phi\}$$
 \end{Def}
 \begin{Thm}\label{lem:good}   Let $\Phi$ be an $\mathcal O$-irreducible
   element of $\Aut(G,\mathcal O)$. Then,
if $T\in\mathcal M(\Phi)$ and $f:T\to \Phi(T)$ is an optimal map, we have
$$T_{\max}(f)=T.$$
 \end{Thm}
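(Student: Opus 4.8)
The plan is to argue by contradiction, in the spirit of the analogous statement for free groups. Suppose $T \in \mathcal M(\Phi)$ but $T_{\max}(f) \subsetneq T$ for an optimal map $f \colon T \to \Phi(T)$. Among all optimal maps $T \to \Phi(T)$ realizing $\lambda_\Phi = \Lip(f)$, choose one for which $T_{\max}(f)$ has the smallest possible number of $G$-orbits of edges; this is possible since there are finitely many orbits of edges. By Corollary~\ref{cor:optimalfromf} we may indeed take such an $f$ optimal. The key observation is that the complement $T \setminus T_{\max}(f)$ is a non-empty $G$-invariant subgraph of $T$ whose edges are stretched by a factor strictly less than $\Lip(f)$, so there is room to fold along it without ever creating a new maximally stretched edge.

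The main step is to perform an isometric folding path directed by $f$, but \emph{only folding inside $T_{\max}(f)$}, and to track what happens to the tension graph. More precisely, I would apply Lemma~\ref{lem:square} with the tree $A = T$ and the automorphism $\Phi$: since $\Lip(f) = \lambda_\Phi$ is the minimum of $\Lip(h)$ over all $h \colon X \to \Phi(X)$, that lemma tells us that after a local fold $q_t \colon T \to T_t$ directed by $f$ we have $\Lip(PL(h_t)) = \Lip(f)$ and $T_{t\,\max}(PL(h_t)) \subseteq q_t(T_{\max}(f))$, and moreover if $PL(h_t)$ fails to be optimal we may pass to $Opt(PL(h_t))$ and strictly shrink the tension graph. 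Iterating along a complete folding path that uses only the folds available inside $T_{\max}(f)$, one of two things must happen at the end: either at some stage the tension graph strictly decreases in number of orbits of edges — contradicting the minimality in our choice of $f$, since the resulting map is again an optimal map realizing $\lambda_\Phi$ — or the folding terminates with $T_{\max}$ still equal to a $q_t$-image of the original tension graph. But the folding path, being isometric, stretches nothing by a factor $\geq 1$ outside the tension graph, and folds \emph{within} the tension graph reduce the number of orbits of edges there unless the restriction of $f$ to $T_{\max}(f)$ is already a $G$-equivariant isometry onto its image. If that restriction is an isometry, then by Theorem~\ref{thm_optimal_map} there is $g \in G$ with tight axis in $T$, forcing $\axis_T(g) \subset T_{\max}(f)$; but a tight axis maps isometrically, so one can then rescale: shrink the edges outside $T_{\max}(f)$ a little (they were strictly shorter-stretched), which after renormalizing to co-volume one strictly decreases $\Lambda_R(T,\Phi(T))$ below $\lambda_\Phi$, contradicting $T \in \mathcal M(\Phi)$.

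I expect the delicate point to be organizing the bookkeeping so that the folding path genuinely only touches $T_{\max}(f)$ and so that the ``either the tension graph shrinks, or $f|_{T_{\max}}$ is an isometry'' dichotomy is clean — in particular making sure that passing to $Opt$ after each fold does not inadvertently destroy the containment $T_{t\,\max} \subseteq q_t(T_{\max})$ or raise the Lipschitz constant. Lemma~\ref{lem:square} is tailored precisely to handle this, so the argument should go through, but one must be careful that the sequence of folds terminates (it does, by the finiteness of orbits of edges, exactly as in Proposition~\ref{prop:foldingpath}) and that irreducibility of $\Phi$ is what rules out the degenerate case where $T_{\max}(f)$ is a proper invariant subgraph that $f$ maps into itself: if $T_{\max}(f) \neq T$ were $f$-invariant, then since $\quot G {T_{\max}(f)}$ would have to be a union of trees with at most one non-free vertex each, the edges outside could be collapsed to produce a tree in a lower-complexity stratum, again contradicting either irreducibility or minimality of $\lambda_\Phi$. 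Assembling these pieces — the folding reduction via Lemma~\ref{lem:square}, the rescaling trick on the complement of the tension graph, and the use of irreducibility to exclude invariant proper subgraphs — yields $T_{\max}(f) = T$.
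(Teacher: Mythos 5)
Your approach is genuinely different from the paper's, and it has a real gap at the crucial final step.

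The paper argues by a direct metric perturbation on a single tree: either $T_{\max}(f)$ is $f$-invariant (whence, containing a hyperbolic axis by Theorem~\ref{thm_optimal_map}, irreducibility forces $T_{\max}=T$), or some edge $e\in T_{\max}$ has $f$-image running over an edge $a\notin T_{\max}$. Shrinking $a$ by a small amount strictly decreases $S_e$, does not raise $\Lip$ above $\lambda_\Phi$ (which is impossible anyway since $\lambda_\Phi$ is the infimum), and thus strictly decreases the tension graph; one then re-optimizes via Corollary~\ref{cor:optimalfromf}, without increasing $\Lip$, and iterates. Finiteness of orbits of edges forces the process to stop at an $f$-invariant tension graph, and irreducibility closes the argument. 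This is a more elementary move than yours: no folding (and no passage to a new tree) is needed.

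Your folding step via Lemma~\ref{lem:square} is sound in itself, but the minimality you set up at the start --- ``among optimal maps $T\to\Phi(T)$, minimize the number of orbits of edges in $T_{\max}$'' --- is quantified over a \emph{fixed} $T$. After a fold you are in a different tree $T_t$, so a strict decrease of $T_{t\,\max}$ does not contradict your choice; you would need to minimize over all pairs $(T',f')$ with $T'\in\mathcal M(\Phi)$ and $f'$ optimal for this step to yield a contradiction. The more serious gap is the rescaling trick at the end. If $f|_{T_{\max}}$ is a $\lambda_\Phi$-homothety but $T_{\max}$ is not $f$-invariant, shrinking the edges outside $T_{\max}$ only lowers $S_e$ for those $e\in T_{\max}$ whose images actually pass through a shrunk edge; any edge of $T_{\max}$ whose image stays inside $T_{\max}$ retains $S_e=\lambda_\Phi$, so $\Lip$ (and $\Lambda_R$) need not drop below $\lambda_\Phi$. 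Your claim that this renormalization ``strictly decreases $\Lambda_R(T,\Phi(T))$ below $\lambda_\Phi$'' is therefore unjustified. The paper avoids this pitfall precisely because it does not try to drop $\Lip$ below $\lambda_\Phi$ (which cannot happen): it only shrinks the tension graph while keeping $\Lip=\lambda_\Phi$, and then invokes irreducibility once the tension graph has become $f$-invariant. You do correctly identify that irreducibility is what handles the $f$-invariant case, so the skeleton of your plan is right, but the shrinking-and-renormalizing step needs to be replaced by the paper's one-edge perturbation, or your minimality needs to be recast over all of $\mathcal M(\Phi)$ and the homothety case handled via invariance rather than a strict drop in $\Lambda_R$.
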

\proof We consider $f$ as either an $\mathcal O$-map from $T\to \Phi(T)$ or a map $f:T\to T$
representing $\Phi$, without distinction. If $T_{\max}$ is $f$-invariant we are done because, since by
Theorem~\ref{thm_optimal_map} $T_{\max}$ contains the axis of some hyperbolic element,
irreducibility implies $T=T_{\max}$.

In the subsequent argument we shall perform small
perturbations on $T$ by changing edge-lengths. The map $f$ will induce maps on these new trees
which are the same as $f$ set-wise. Formally we have many different pairs of trees and
associated maps, but that we still call $(T,f)$.

Suppose that $T_{\max}$ is not $f$-invariant. Then there is an edge $e$ in $T_{\max}$ whose image
contains and edge $a$ which is not in $T_{\max}$. We shrink $a$ by a small amount.
If the perturbation is small enough, $\Lip(f)$ is not increased,
and since $\Lip(f)=\lambda_\Phi$ this
remains true after the perturbation. Therefore, $e$ is no longer in $T_{\max}$,
$a$ is still not in $T_{\max}$, and $T_{\max}$ must contain some other edge $b$ with $S_b(f)=\lambda_\Phi$.

Note that after perturbation $f$ might no longer be optimal. However, by Corollary~\ref{cor:optimalfromf} there is an optimal map $h:T\to\Phi(T)$ with
$$T_{\max}(h)\subsetneq T_{\max}(f).$$ Since $\Lip(f)=\lambda_\Phi$, then also
$\Lip(h)=\lambda_\Phi$.
If $T_{\max}(h)$ is not $f$-invariant, we repeat this argument recursively.
After finitely many steps we must end obtaining a finite sequence of maps $h_i$ with the
properties that $\Lip(h_i)=\lambda_\Phi$ and
$$\emptyset\neq T_{\max}(h_0)\subsetneq T_{\max}(h_1)\subsetneq \dots \subsetneq T_{\max}(f)\subsetneq T.$$

Since we stopped, $T_{\max}(h_0)$ is $f$-invariant. Therefore by irreducibility
$T_{\max}(h_0)=T$, and the above
condition implies $T=T_{\max}(f)$.
\qed

\medskip

\begin{Rem}
In the proof of Theorem~\ref{lem:good} we showed the following fact which needs no
assumption on reducibility, and may be of
independent interest: If $(T,f)$ are so that first, $T$ locally weak minimizes $d(T,\Phi T)$,
and second $T_{\max}$ is locally minimal, then $T_{\max}$ is $f$-invariant. 
\end{Rem}

\begin{Lem}\label{lem:extract}
  Suppose $T\in\mathcal M(\Phi)$ and suppose that $f:T\to T$ is a Lipschitz map with
  $\Lip(f)=\lambda_\Phi$. Then $f$ is optimal.
\end{Lem}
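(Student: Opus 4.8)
The plan is to compare an arbitrary Lipschitz map $f\colon T\to T$ representing $\Phi$ with an optimal map and use Theorem~\ref{lem:good} to pin down the tension graph. First I would invoke Corollary~\ref{cor:optimalfromf} (or rather Theorem~\ref{thm:finfty} together with Lemma~\ref{lem:optimal}) to produce an optimal map $h\colon T\to\Phi(T)$ minimizing the Lipschitz constant among $\mathcal O$-maps $T\to\Phi(T)$. By Lemma~\ref{lem:minn} and the definition of $\lambda_\Phi$, combined with $T\in\mathcal M(\Phi)$, we have $\Lip(h)=\Lambda_R(T,\Phi(T))=\lambda_\Phi$. Thus $f$ and $h$ have the same Lipschitz constant $\lambda_\Phi$, which by Lemma~\ref{lem:minn} is also equal to $\sup_{g\in\Hyp(\mathcal O)} l_{\Phi(T)}(g)/l_T(g)$.

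The key point is that, by Theorem~\ref{lem:good} applied to the optimal map $h$, we have $T_{\max}(h)=T$, i.e.\ every edge of $T$ is stretched by $h$ exactly by the factor $\lambda_\Phi$. I claim this forces $f$ to stretch every edge by $\lambda_\Phi$ as well, and hence (the tension graph of $f$ being all of $T$) $f$ is automatically optimal. To see the claim, note that $f$ and $h$ agree on the non-free vertices of $T$ by Lemma~\ref{lem:Omapsexists}. Pick any hyperbolic $g$ whose axis in $T$ is tight for $h$, which exists by Theorem~\ref{thm_optimal_map}; then $l_{\Phi(T)}(g)/l_T(g)=\lambda_\Phi$ and moreover $h$ maps $\axis_T(g)$ isometrically-up-to-scaling onto $\axis_{\Phi(T)}(g)$. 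Running $f$ along a fundamental domain $[x,gx]$ of this axis, continuity of $f$ gives $l_{\Phi(T)}(g)\le l_{\Phi(T)}\bigl(f([x,gx])\bigr)\le \lambda_\Phi\, l_T([x,gx])=\lambda_\Phi\, l_T(g)=l_{\Phi(T)}(g)$, so all inequalities are equalities: $f$ restricted to $[x,gx]$ has no backtracking and stretches by exactly $\lambda_\Phi$ everywhere along the axis. Since by Theorem~\ref{thm_optimal_map} (applied to $h$, which has $T_{\max}(h)=T$) such tight axes pass through every edge of $T$ — or, more carefully, since the union of $G$-translates of the periodic legal half-lines produced in that proof, starting from every edge, covers all of $T_{\max}(h)=T$ — every edge of $T$ lies on some such axis and is therefore stretched by $f$ exactly by $\lambda_\Phi$. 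Hence $T_{\max}(f)=T$.

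Finally, once $T_{\max}(f)=T$, optimality of $f$ is nearly immediate: the pre-train track structure induced by $f$ on $T=T_{\max}(f)$ has at least two gates at every non-free vertex by the Lemma preceding Lemma~\ref{lem:optimal}, and at every free vertex two gates because a vertex on a periodic legal axis (which every vertex is, by the covering argument above) has a legal turn there, giving two distinct gates. By Remark~\ref{ropt} this means $f$ is optimal.

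The step I expect to be the main obstacle is the covering claim: that the tight periodic legal half-lines coming out of Theorem~\ref{thm_optimal_map}, taken over all starting edges, actually pass through \emph{every} edge of $T_{\max}(h)=T$, rather than just \emph{some} edge. If this turns out to be false in the needed strength, the fallback is to argue directly that any edge $e$ with $S_{f,e}<\lambda_\Phi$ could be used to shrink $e$ slightly without increasing $\Lip(f)$ below what is forced, contradicting that $h$ — which shares non-free-vertex values with $f$ and hence constrains lengths along axes — must still realize $\lambda_\Phi$; this is essentially the perturbation argument of Theorem~\ref{lem:good} run backwards, and I would reuse it verbatim if the clean covering statement is unavailable.
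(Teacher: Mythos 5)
Your main argument has a genuine gap, and you correctly flagged it: the claim that the periodic tight axes coming out of Theorem~\ref{thm_optimal_map} cover every edge of $T$ is not established and is in general false. The proof of Theorem~\ref{thm_optimal_map} constructs, for each starting edge $e_0$, a legal half-line that eventually becomes periodic; the tight axis is the \emph{periodic part}, which need not contain $e_0$. Edges that feed into a cycle of the successor function but are not on any cycle will never appear on such an axis, so the union of $G$-translates of these axes may be a proper subgraph of $T_{\max}(h)=T$. Consequently the argument only shows that $f$ stretches by $\lambda_\Phi$ (with no backtracking) along the edges of those axes, not along all of $T$, and the same covering claim is then used again in the final ``two gates at every free vertex'' step.

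Your fallback idea (shrink an edge $e$ with small stretch) is closer to the right fix, but as stated it is not well-posed: you speak of $S_{f,e}<\lambda_\Phi$, yet $S_{f,e}$ is only defined for PL maps, and $f$ is merely assumed Lipschitz. This is precisely the point where the paper's proof differs and is cleaner: pass to $PL(f)$ first. Since $\Lip(PL(f))\le\Lip(f)=\lambda_\Phi$ and $\lambda_\Phi$ is minimal, $\Lip(PL(f))=\lambda_\Phi$. If $PL(f)$ were not optimal, Corollary~\ref{cor:optimalfromf} would produce an optimal map $h$ with $T_{\max}(h)\subsetneq T_{\max}(PL(f))$, contradicting $T_{\max}(h)=T$ from Theorem~\ref{lem:good}. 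Hence $PL(f)$ is optimal and $T_{\max}(PL(f))=T$, i.e.\ every edge is stretched linearly at speed exactly $\lambda_\Phi$ by $PL(f)$. Since $PL(f)$ agrees with $f$ on vertices, the endpoints of each edge $e$ are at distance $\lambda_\Phi\,l_T(e)$ in the image, while $f$ is $\lambda_\Phi$-Lipschitz on $e$; the triangle-inequality squeeze forces $f$ to be linear at speed $\lambda_\Phi$ on $e$, so $f=PL(f)$ and $f$ is optimal. I recommend replacing your axis-covering argument with this $PL(f)$ detour: it achieves what you wanted (every edge maximally stretched) by invoking Theorem~\ref{lem:good} for $PL(f)$ directly rather than trying to reconstruct the tension graph edge by edge from tight axes.
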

\proof
First, consider $PL(f)$. Since $\Lip(PL(f))\leq\Lip(f)$ and $\Lip(f)$ is minimal, we have
$\Lip(PL(f))=\lambda_\Phi$. Moreover, combining
Lemma~\ref{lem:square} and
Theorem~\ref{lem:good}, we get that $PL(f)$ is optimal and $T_{\max}(PL(f))=T$.

Since
the maps $PL(f)$  have the property that $S_e(PL(f))\leq S_e(f)$ with inequality being strict
at some edge only if $f$ is not $PL$, it follows that $f=PL(f)$. \qed

 \begin{Thm}[$\mathcal M(\Phi)$ is fold-invariant.]\label{thm:foldinv}
      Let $\Phi$ be an $\mathcal O$- irreducible
  element of $\Aut(G,\mathcal O)$. Then, the set $\mathcal M(\Phi)$ is invariant
    under folding directed by optimal maps.

More precisely,  if $T\in\mathcal M(\Phi)$ and  $f:T\to \Phi(T)$ is an optimal map, and
 if $T_t$ is an isometric
    folding path from $T\to \Phi(T)$ directed by $f$, then  we have:
    \begin{enumerate}[a)]
    \item $T_t\in \mathcal M(\Phi)$.
    \item The quotient maps
    $h_t:T_t\to\Phi(T_t)$, defined by the diagram in Figure~\ref{fig:ht}, are optimal.
    \end{enumerate}
 In particular, any local fold directed by $f$ stays in $\mathcal M(\Phi)$.
 \end{Thm}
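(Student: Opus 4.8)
The plan is to combine the structural facts established in Lemmas~\ref{lem:tensionfold},~\ref{lem:square} with the displacement-minimality characterisation of Theorem~\ref{lem:good} and Lemma~\ref{lem:extract}. Since $T\in\mathcal M(\Phi)$ and $f$ is optimal, Theorem~\ref{lem:good} gives $T_{\max}(f)=T$, so the hypotheses of the isometric folding path construction are met, i.e. after the initial rescaling every edge is maximally stretched and $\Lip(f_0)=1$. The key observation is that we may apply Lemma~\ref{lem:square} with this $f$ (noting $\Lip(f)=\lambda_\Phi$ is the global minimum among all $\mathcal O$-maps $X\to\Phi(X)$ by definition of $\lambda_\Phi$ and the fact that $T$ realises it): along each elementary fold $t\mapsto T_t$, the quotient map $h_t:T_t\to\Phi(T_t)$ satisfies $\Lip(PL(h_t))=\Lip(f)=\lambda_\Phi$ and $T_{t\,\max}(PL(h_t))\subseteq q_t(T_{\max}(f))=q_t(T)=T_t$. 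Hence $\Lambda_R(T_t,\Phi(T_t))\le\Lip(PL(h_t))=\lambda_\Phi$, and since $\lambda_\Phi$ is the infimum, equality holds, giving $T_t\in\mathcal M(\Phi)$. This proves (a).

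For (b): once we know $T_t\in\mathcal M(\Phi)$ and $\Lip(h_t)=\lambda_\Phi$, we can feed $h_t$ (or rather a map $T_t\to T_t$ representing $\Phi$ obtained from $h_t$) into Lemma~\ref{lem:extract}, which says that any Lipschitz self-map realising $\lambda_\Phi$ on a tree in $\mathcal M(\Phi)$ is automatically optimal (and in particular equals its own PL-ification). Applying this to $h_t$ yields that $h_t$ is PL and optimal, which is exactly (b). The final sentence of the statement — that any single local fold stays in $\mathcal M(\Phi)$ — is an immediate special case, since a local fold directed by $f$ is the first segment of such an isometric folding path (or one can just apply the displayed inclusion from Lemma~\ref{lem:square} directly without parametrising the whole path).

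One technical point to handle carefully: the isometric folding path involves a rescaling by $\Lip(f)^{(1-t)}$, and one must check that this rescaling is harmless, i.e. that $\Lambda_R$ and membership in $\mathcal M(\Phi)$ are scale-invariant (as already used in the geodesics section, since $\Lambda_R(\lambda C,\mu D)$ scales accordingly and $T,\Phi(T)$ always have equal covolume). So the unscaled path $\hat\gamma(t)=A_t$ from $A_0=\Lip(f)T$ to $\Phi(T)$ should be analysed first, where one genuinely has $\Lip=1$ maps and can invoke Lemma~\ref{lem:square} at each elementary fold; then the rescaling is transported along. The main obstacle I anticipate is purely bookkeeping rather than conceptual: verifying that $\Lip(h_t)=\lambda_\Phi$ is preserved along an entire concatenation of complete simple folds (not just one), which requires inducting on the folds using that $q_t$ is a local isometry at each stage and that passing to $PL$ never increases the Lipschitz constant — so the global minimality of $\lambda_\Phi$ forces equality to persist. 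Once that induction is in place, (a) and (b) drop out as above, and I would close by remarking that the same argument shows $h_t$ admits a tight axis, so the folding path can be continued.
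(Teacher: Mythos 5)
Your proof matches the paper's argument exactly: part (a) is obtained from Lemma~\ref{lem:square} combined with the fact that $\lambda_\Phi$ is the global infimum, and part (b) from Lemma~\ref{lem:extract} once $\Lip(h_t)=\lambda_\Phi$ is in hand. The paper's own proof is a two-line version of what you wrote; your additional care regarding the rescaling along the folding path, the (implicit) appeal to Theorem~\ref{lem:good} so that the isometric folding path is even defined, and the induction over a concatenation of simple folds are all details that the paper leaves tacit but that you correctly identify as needing to be checked.
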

\proof Claim $a)$ is a  direct consequence of Lemma~\ref{lem:square}. Claim $b)$ follows from
Lemma~\ref{lem:extract} because $\Lip(h_t)=\Lip(f)=\lambda_\Phi$.
\qed

\subsection{Train track maps}
Train track maps can be defined via topological properties as well as metric properties. In
this section we relate the two point of view.  

Recall that we defined pre-train track and train track structures in
Definition~\ref{dtt}, and that in our notation a train track structure is required to have {\bf
  at least two gates} at every vertex.
  \begin{Def}[Train track map]\label{Defttm}
    A PL-map $f:T\to T$ representing $\Phi$ is a {\em train track map}
    if there is a train track structure on $T$ so that
    \begin{enumerate}[1)]
    \item $f$ maps edges to legal paths (in particular, $f$ does not collapse edges);
    \item If $f(v)$ is a vertex, then $f$ maps inequivalent germs at $v$ to inequivalent germs
      at $f(v)$.
    \end{enumerate}
  \end{Def}

Here a some remark is needed. First, we note that a part the PL requirement, this
definition is topological and does not involves the metric on $T$. In fact if $f$ is train
track and we change the metric on edges of $T$, then up to re-PL-ize $f$ it remains train
track. For these reasons we have to distinguish  between (topological) train track maps and
(metric) optimal train track maps.

The second remark on the definition of train track map is that,  given an optimal map $f$ representing $\Phi$, we can consider
two pre-train track structures, namely that given by $f$ and that generated by the iterates
$f^k$. We denote the two structures in the following way
$$\sim_f \qquad\text{ and }\qquad \langle\sim_{f^{k}}\rangle$$
So, two germs are $\sim_f$-equivalent if they are identified by $f$, they are
$\sim_{f^k}$-equivalent if they are identified by $f^k$ and they are $\langle
\sim_{f^k}\rangle$-equivalent if they are identified by some power of $f$.

In particular one may ask if $f$ is a train track for $\sim_f$ or for $\simfk$.

\begin{Lem}\label{lem:lemmaX}
  Suppose $f:T\to T$ is a PL-map representing $\Phi\in\Aut(G,\mathcal O)$.  If $f$ is a train
  track map for $\sim$, then $\sim\supseteq\simfk$. In particular, if $f$ is a train track map
  for $\sim_f$, then $\sim_f=\simfk$.
\end{Lem}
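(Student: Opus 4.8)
The plan is to prove the inclusion $\sim\, \supseteq\, \simfk$ by induction on $k$, showing $\sim\, \supseteq\, \sim_{f^k}$ for every $k$, and then deduce the last sentence as the special case where $\sim$ is generated by $f$ in a single step, i.e. $\sim_f$, which is already contained in $\simfk$ by definition, forcing equality.

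First I would set up the base case $k=1$: if two germs of edges $\alpha,\beta$ at a vertex $v$ are $\sim_{f}$-equivalent, then by the very definition of the pre-train track structure induced by $f$ they have the same non-degenerate $f$-image; in particular the turn $(\alpha,\beta)$ is not legal for $\sim$ (as $\alpha,\beta$ lie in the same gate for $\sim$ only if $\sim$ identifies them), so since $f$ maps edges to legal paths and illegal turns cannot appear inside legal paths, condition (1) of Definition~\ref{Defttm} together with condition (2) forces $\alpha\sim\beta$. Actually the cleanest phrasing: $\alpha\sim_f\beta$ means $f(\alpha)$ and $f(\beta)$ start with the same edge of $T$; if $\alpha$ and $\beta$ were in different gates for $\sim$ (i.e. $\alpha\not\sim\beta$), then $(\alpha,\beta)$ is a legal turn, so by property (2) — applied at $v$, noting $f(v)$ is a vertex since $f$ maps edges to legal \emph{paths} — the germs $f(\alpha)$ and $f(\beta)$ would be inequivalent at $f(v)$, hence certainly distinct edges, contradicting $\alpha\sim_f\beta$. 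Therefore $\sim_f\,\subseteq\,\sim$.

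For the inductive step, suppose $\sim_{f^{k}}\,\subseteq\,\sim$ and let $\alpha\sim_{f^{k+1}}\beta$, so $f^{k+1}(\alpha)$ and $f^{k+1}(\beta)$ begin with the same edge. Write $f^{k+1}=f^k\circ f$. If $f(\alpha)$ and $f(\beta)$ already begin with the same edge then $\alpha\sim_f\beta$ and we are done by the base case. Otherwise $f(\alpha)$ begins with an edge $\alpha'$ and $f(\beta)$ with a distinct edge $\beta'$, both germs at $f(v)$; then the turn $(\alpha',\beta')$ at $f(v)$ is crossed — or rather, the condition $f^k(\alpha')\sim_{\text{initial edge}} f^k(\beta')$ says $\alpha'\sim_{f^k}\beta'$, hence $\alpha'\sim\beta'$ by the inductive hypothesis, so $(\alpha',\beta')$ is an illegal turn for $\sim$. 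But $\alpha'$ is the first germ of the legal path $f(\alpha)$ and $\beta'$ the first germ of the legal path $f(\beta)$, and $(\alpha,\beta)$ — if it were legal — would be mapped by $f$ to a pair of inequivalent germs by property (2), i.e. to a legal turn, contradiction. Hence $(\alpha,\beta)$ is illegal for $\sim$, that is $\alpha\sim\beta$. This completes the induction, giving $\simfk\,=\,\bigcup_k \sim_{f^k}\,\subseteq\,\sim$. Finally, taking $\sim\,=\,\sim_f$ (which is a legitimate choice since $f$ is assumed train track for $\sim_f$) yields $\simfk\,\subseteq\,\sim_f$, while $\sim_f\,\subseteq\,\simfk$ holds trivially because $\simfk$ is the equivalence relation generated by all the $\sim_{f^k}$ including $k=1$; hence $\sim_f\,=\,\simfk$.

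The main obstacle I anticipate is bookkeeping the logical direction of property (2): it is phrased as ``legal turns map to inequivalent (= legal) germs,'' so the contrapositive ``if the image turn is illegal then the original turn is illegal'' is what actually drives both the base case and the inductive step, and one must be careful that $f(v)$ is genuinely a vertex (guaranteed by property (1), since edges map to legal \emph{paths}, not to partial edges) so that property (2) is applicable. A secondary technical point is that $\simfk$ is defined as the relation \emph{generated by} the $\sim_{f^k}$, so to conclude $\simfk\,\subseteq\,\sim$ from $\sim_{f^k}\,\subseteq\,\sim$ for all $k$ one uses that $\sim$ is itself an equivalence relation and hence contains the equivalence relation generated by any sub-relation; this is routine but worth stating explicitly.
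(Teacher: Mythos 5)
Your argument is a forward-induction repackaging of the paper's proof, and the engine is the same: read Condition (2) of Definition~\ref{Defttm} in the contrapositive --- if $f$ merges a turn, the turn was illegal --- and apply it once per power of $f$. The paper instead fixes the smallest $k$ with $f^k(e_1)=f^k(e_2)$, locates the fold at the step $f^{k-1}(\tau)\mapsto f^k(\tau)$, and descends back to $\tau$; the two organizations are equivalent in content, and your closing observations (that $\simfk$ is the increasing union of the $\sim_{f^k}$, and that the ``in particular'' follows by taking $\sim\ =\ \sim_f$) match the paper exactly.

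The one step I would flag is the parenthetical claim that $f(v)$ is a vertex ``since $f$ maps edges to legal paths.'' That inference is not valid: a legal path need not begin or end at a vertex, and a PL train track map can carry a vertex into the interior of an edge --- this is precisely why Condition (2) of Definition~\ref{Defttm} is phrased conditionally (``\emph{if} $f(v)$ is a vertex''), and why the existence of a \emph{simplicial} train track representative is stated as a separate theorem later in the paper. Your inductive step needs $f(v)$ to be a vertex so that $\alpha'$ and $\beta'$ are genuine germs at a vertex, to which $\sim_{f^k}$, $\sim$, and Condition (2) can be applied. The paper deals with the analogous possibility explicitly, via the dichotomy ``either $f^{k-1}(\tau)$ is contained in an edge or it is a turn,'' and rules out the first case: if $f^{k-1}(v)$ were interior to an edge, the germs $f^{k-1}(e_1)$, $f^{k-1}(e_2)$ would be the two opposite directions there (distinct by minimality of $k$), and since edges map to immersed legal paths, $f$ is locally injective on edge interiors and therefore cannot identify them --- contradicting $f^k(e_1)=f^k(e_2)$. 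You should replace your incorrect justification with an argument of this kind; note that the minimality of $k$, which your plain forward induction does not invoke, is exactly what makes that case analysis clean.
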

\proof The last claim follows from the first because $\sim_f\subseteq \simfk$ by definition. 
Suppose $\tau=(e_1,e_2)$ is a turn and suppose that $e_1$ and
$e_2$ are in the same gate for $\simfk$. Then there is some $k$ so that
$f^k(e_1)=f^k(e_2)$, choose the first $k\geq 1$ so that this happens. Either $f^{k-1}(\tau)$ is
contained in an edge or it is a turn. The first case is not allowed since $f$ is a $\sim$-train track
map. Therefore $f^{k-1}(\tau)$ is an turn. Since
$f$ identifies the two germs of $f^{k-1}(\tau)$, by Condition $2)$ of Definition~\ref{Defttm},
$f^{k-1}(\tau)$ is $\sim$-illegal. Since $f$ is a train track map, turns that are pre-images of
illegal turns are illegal. It follows that $\tau$ is illegal, that is to say,
$e_1\sim e_2$.\qed

\begin{Cor}\label{CorX}
 Is $f$ is a train
  track map for some $\sim$, then it is train track for $\simfk$.
\end{Cor}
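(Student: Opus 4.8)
The plan is to deduce this directly from Lemma~\ref{lem:lemmaX} together with a monotonicity observation about the train track property under enlarging the equivalence relation. First I would record the elementary fact that if $f$ is a train track map for a pre-train track structure $\sim$ and $\sim'$ is another $G$-invariant pre-train track structure with $\sim\,\supseteq\,\sim'$ (fewer gates), then $f$ is still a train track map for $\sim'$, provided $\sim'$ has at least two gates at every vertex — which is automatic here since $\simfk$ refines no further than $\sim_f$ and $f$ optimal gives at least two gates. The point is that Condition~1) of Definition~\ref{Defttm} only becomes \emph{easier} to satisfy when we coarsen the relation: a path that is legal for $\sim$ has all its turns in distinct $\sim$-classes, hence a fortiori in distinct $\sim'$-classes when $\sim'\subseteq\sim$, so it is legal for $\sim'$ as well; thus $f$ still maps edges to legal paths. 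For Condition~2), if $f$ maps $\sim$-inequivalent germs at $v$ to $\sim$-inequivalent germs at $f(v)$, then germs that are $\sim'$-inequivalent are also $\sim$-inequivalent, their images are $\sim$-inequivalent, hence $\sim'$-inequivalent.

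With that observation in hand, the corollary is immediate: suppose $f$ is a train track map for some pre-train track structure $\sim$. By Lemma~\ref{lem:lemmaX} we have $\sim\,\supseteq\,\simfk$. Applying the monotonicity remark with $\sim'=\simfk$ shows that $f$ is a train track map for $\simfk$.

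The only point requiring a word of care — and the one I would expect to be the main obstacle, if any — is checking that $\simfk$ genuinely defines a \emph{train track} structure in the strict sense of Definition~\ref{dtt}, i.e.\ has at least two gates at every vertex, rather than merely a pre-train track structure. Since $\sim_f\,\subseteq\,\simfk$ and (for $f$ optimal, by the Lemma just before Remark~\ref{ropt}) every non-free vertex already has at least two $\sim_f$-gates, enlarging to $\simfk$ can only merge gates, so one must verify two $\simfk$-gates survive. This follows because $f$ is a train track map for $\sim$ and $\sim\,\supseteq\,\simfk$, so $\sim$ already has at least two gates; but if $e_1\not\sim e_2$ while $e_1\,\simfk\,e_2$, then by Lemma~\ref{lem:lemmaX}'s argument the turn $(e_1,e_2)$ would be $\sim$-illegal, i.e.\ $e_1\sim e_2$, a contradiction — so in fact $\sim$ and $\simfk$ restrict to the same partition into gates at each vertex, and the two-gate condition transfers verbatim.
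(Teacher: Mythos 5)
Your monotonicity observation is correct for Condition~1) but \emph{wrong} for Condition~2), and this is exactly where the gap lies. You want to transfer Condition~2) from $\sim$ to a finer relation $\sim'\subseteq\sim$ by the chain ``$e_1\not\sim' e_2 \Rightarrow e_1\not\sim e_2 \Rightarrow f(e_1)\not\sim f(e_2) \Rightarrow f(e_1)\not\sim' f(e_2)$''. The first implication requires $\sim\subseteq\sim'$, while the last requires $\sim'\subseteq\sim$; they cannot both hold unless $\sim=\sim'$, and you have only $\simfk\subseteq\sim$ from Lemma~\ref{lem:lemmaX}. Indeed, Condition~2) is \emph{not} monotone under refinement: if $e_1\sim e_2$ but $e_1\not\sim' e_2$, nothing prevents $f(e_1)\sim' f(e_2)$. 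Your attempted repair in the last paragraph --- that $\sim$ and $\simfk$ induce the \emph{same} partition into gates --- is also unjustified. What you actually derive there (from ``if $e_1\not\sim e_2$ while $e_1\simfk e_2$ then contradiction'') is merely the containment $\simfk\subseteq\sim$ again, which says $\simfk$ refines $\sim$, not that they agree; and in general they do not.

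The paper's proof handles Condition~2) by a different and more direct route, which you should use instead: Condition~2) holds for $\simfk$ \emph{automatically}, by the very definition of $\simfk$. If $f(v)$ is a vertex and $f(e_1)\simfk f(e_2)$, then some power $f^k$ identifies the germs $f(e_1),f(e_2)$, whence $f^{k+1}$ identifies $e_1,e_2$, so $e_1\simfk e_2$. No appeal to $\sim$ or to monotonicity is needed here. Your treatment of Condition~1) (via $\simfk\subseteq\sim$, so $\sim$-legal implies $\simfk$-legal) is correct and is the same as the paper's. Finally, your digression about two gates surviving via $\sim_f\subseteq\simfk$ and optimality of $f$ is unnecessary and invokes a hypothesis not present in the statement: the clean argument is simply that $\simfk\subseteq\sim$ refines $\sim$, and refining a train track structure cannot decrease the number of gates, so $\simfk$ has at least two gates at every vertex.
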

\proof Condition $2)$ of Definition~\ref{Defttm} is automatically satisfied for
$\simfk$. Lemma~\ref{lem:lemmaX} tells us that $\sim$-legal implies $\simfk$-legal. Thus also
condition $1)$ of Definition~\ref{Defttm} is satisfied.\qed

Note that a priori $f$ could be train track for $\simfk$ but not for $\sim_f$.

\begin{Lem}[Topological characterization of train track maps]\label{NuovoLemma}
If $\Phi$ is  irreducible, then for a map $f$ representing $\Phi$, to be a train track map
is equivalent to the condition that  there is a hyperbolic $g\in G$  with axis $L$ so that  
$f^k|_L$ is injective $\forall k\in\mathbb N$.
\end{Lem}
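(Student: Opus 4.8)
The statement is an ``if and only if'', so I would prove the two implications separately, using the two pre-train-track structures $\sim_f$ and $\simfk$ introduced above together with Corollary~\ref{CorX} and Theorem~\ref{thm_optimal_map}.

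\emph{Train track map $\Rightarrow$ legal periodic line.} Suppose $f:T\to T$ is a train track map for some structure $\sim$. By Corollary~\ref{CorX}, $f$ is also a train track map for $\simfk$, so I may assume $\sim\,=\,\simfk$. Every vertex has at least two $\simfk$-gates. Since $f$ is PL and $\Phi$ is irreducible, I would first argue that $T_{\max}(f)=T$ need \emph{not} hold a priori, so instead I work directly with the whole of $T$: every vertex has a $\simfk$-legal turn, so I can $G$-equivariantly choose, for each oriented edge $e$, a legal successor, exactly as in the proof of Theorem~\ref{thm_optimal_map}. Concatenating successors from a starting edge $e_0$ gives an immersed legal half-line that is eventually periodic, because $T$ has finitely many orbits of edges; the period is a hyperbolic $g\in G$ with axis $L$, and $L$ is legal. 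Since $f$ maps legal paths to legal paths (Condition~1) and $f$ is locally injective on legal paths (Condition~2 together with the fact that legal turns map to legal turns), $f^k|_L$ is injective for every $k$: indeed if $f^k|_L$ folded some turn $\tau$ of $L$, then $f^{k}(\tau)$ would be degenerate, forcing $f^{j}(\tau)$ to be an illegal turn for some $j<k$, contradicting legality of $L$. (This is essentially the contrapositive of the argument in Lemma~\ref{lem:lemmaX}.)

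\emph{Legal periodic line $\Rightarrow$ train track map.} Suppose there is a hyperbolic $g$ with axis $L$ such that $f^k|_L$ is injective for all $k$. I want to produce a train track structure witnessing that $f$ is a train track map. The natural candidate is $\simfk$: Condition~2 of Definition~\ref{Defttm} holds automatically for $\simfk$ (if $f$ identifies two germs then they are $\sim_{f}$-, hence $\simfk$-, equivalent, so inequivalent germs have inequivalent images). For Condition~1 I must show $f$ maps each edge to a $\simfk$-legal path and that $\simfk$ has at least two gates at every vertex. The point of injectivity of all $f^k|_L$ is that the turns actually crossed by the bi-infinite family $\{f^k(L)\}_{k\ge 0}$ are all $\simfk$-legal, and since $L$ is a $g$-invariant line, $\bigcup_k f^k(L)$ together with its $G$-translates is an $f$-invariant $G$-subgraph; by irreducibility of $\Phi$ this subgraph must be all of $T$ (its quotient contains the closed loop coming from $g$, so it is not a union of trees each with at most one non-free vertex). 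Hence every edge of $T$ lies on some translate of some $f^k(L)$, every edge is crossed legally, and at every vertex two distinct $f^k$-images of edges of $L$ cross, giving two gates. Condition~1 then follows: $f(e)$ is a sub-path of a translate of $f^{k+1}(L)$, which is legal since no $f^j|_L$ folds.

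\emph{Main obstacle.} The genuinely delicate point is the second implication, specifically upgrading ``$f^k|_L$ injective for all $k$'' to ``$f$ maps \emph{every} edge to a legal path and every vertex has two gates''. This is where irreducibility of $\Phi$ must be used — without it one only controls the subgraph swept out by the forward orbit of $L$. I would carry this out by checking that $W=\overline{\bigcup_{k\ge 0}\bigcup_{g\in G} g\cdot f^k(L)}$ is a genuine $f$-invariant $G$-subgraph of $T$ (closedness and $f$-invariance are formal; that it is a subgraph, i.e.\ a subcomplex, requires the PL structure), noting $\quot{G}{W}$ contains an embedded essential loop (the image of $L$), so it is not of the excluded form, forcing $W=T$. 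A secondary technical wrinkle is verifying that the turns crossed along $f^k(L)$ are $\simfk$-legal rather than merely $\sim_{f^k}$-legal for one fixed $k$: since injectivity holds for \emph{all} powers simultaneously, if such a turn $\tau$ were $\simfk$-illegal, some $f^m(\tau)$ would be degenerate, and tracing back $\tau$ sits on $f^k(L)$ so $f^{k+m}|_L$ would fail to be injective — contradiction. Once these two points are secured, both implications assemble routinely from Theorem~\ref{thm_optimal_map} and Definition~\ref{Defttm}.
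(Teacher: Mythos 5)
Your proposal is correct and follows essentially the same route as the paper's proof, which is considerably more terse: the paper also derives the forward implication from Corollary~\ref{CorX} together with the periodic-legal-line construction from Theorem~\ref{thm_optimal_map}, and derives the reverse implication by noting that $\bigcup_k G\cdot f^k(L)$ is an $f$-invariant $G$-subgraph containing a hyperbolic axis, hence all of $T$ by irreducibility, after which the verification that $\simfk$ is a train track structure and that Conditions~1) and~2) hold is left to the reader. You fill in exactly the details the paper calls ``readily checked'': the two-gates claim via the observation that any translate of $f^k(L)$ crossing a vertex contributes two germs not identifiable by any $f^m$ (else $f^{k+m}|_L$ would fold), and Condition~1) via the fact that $f(e)$ sits inside a translate of $f^{k+1}(L)$. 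One small simplification worth noting: since $f^k|_L$ is injective and $\Phi^k(g)$ is hyperbolic, $f^k(L)$ is precisely $\axis_T(\Phi^k(g))$, hence automatically a full subcomplex, so the closure in your definition of $W$ is superfluous.
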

\proof
Let $L$ be as in the hypothesis. The iterate images of $L$ form a proper $f$-invariant
sub-graph of $T$ containing the axis of a hyperbolic element. Since $\Phi$ is irreducible, such
sub-graph is the whole $T$. The pre-train track structure $\simfk$ is a train track structure and
it is readily checked that $f$ satisfies Conditions $1)$ and $2)$ of Definition~\ref{Defttm}
 with respect to $\simfk$.

On the other hand, suppose that $f$ is a train track map. As train track structures have at
least two gates at every vertex,  there is a hyperbolic  element $g$ with legal axis $L$, and
Conditions $1)$ and $2)$ imply that this remains true under $f$-iterations. 

By Corollary~\ref{CorX} it is a $\simfk$-train track map, and $\simfk$-legality of $L$ implies
injectivity of $f^k|_L$. 
\qed

\begin{Def}[Train track bundle]
  Let $\Phi$ be an $\mathcal O$-irreducible element of $\Aut(G,\mathcal O)$. We
  define the train track bundle as
$$TT(\Phi)=\{T\in\mathcal O\ : \exists \text{ an optimal train track map } f:T\to T \text{
  representing }\Phi\}$$
\end{Def}

We notice that the Axis bundle $\mathcal A_\Phi$ of $\Phi$ is defined in~\cite{HM} as the closure of the union of all the sets $TT({\Phi^k})$.
\begin{Def}[Strict train tracks]
  Let $\Phi$ be an $\mathcal O$-irreducible element of $\Aut(G,\mathcal O)$. We
  define the strict train track bundle as
$$TT_0(\Phi)=\{T\in\mathcal O\ :\ \exists f:T\to \Phi T \text{ optimal which is train track for } \sim_f\}$$
\end{Def}

\begin{Lem}
  Suppose $f:T\to T$ is a train track map representing $\Phi$. Then there is a rescaling of
  edges of $T$ such that every edge is stretched the same , and hence $f$ becomes optimal. In
  particular, if $f$ is 
  train track, the simplex of $T$ contains a point $T'\in TT_0(\Phi)$.
\end{Lem}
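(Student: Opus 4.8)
The plan is to exploit the fact, already noted in the excerpt, that the property of being a train track map (Definition~\ref{Defttm}) is essentially topological: it depends only on the train track structure and the combinatorial map, not on the metric. Given a train track map $f:T\to T$ representing $\Phi$ with train track structure on $T$, by Lemma~\ref{NuovoLemma} (or directly) there is a hyperbolic $g\in G$ whose axis $L$ is legal and stays injective under all iterates $f^k$. Irreducibility is not even needed for the rescaling itself; what we want is to move within the open simplex of $T$ in $\mathcal O$ (the trees with the same underlying simplicial $G$-tree, varying edge lengths) to a point where $f$, after re-PL-izing, is optimal.

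First I would recall that the pre-train track structure relevant here is $\simfk$: by Corollary~\ref{CorX}, $f$ is a train track map for $\simfk$, and $\simfk$ is a genuine train track structure (at least two gates at each vertex). Now I would invoke the classical Perron--Frobenius-type argument, but phrased metrically as in the rest of the paper: let $M$ be the transition matrix of $f$ on the finitely many $G$-orbits of edges of $T$ (this is finite by the standing hypotheses on $\mathcal O$). Since $f$ does not collapse edges and maps edges to legal — hence immersed — paths, $M$ is a nonnegative integer matrix; irreducibility of $\Phi$ makes $M$ irreducible in the Perron--Frobenius sense (the iterated images of any edge cover $T$). Take the Perron--Frobenius eigenvalue $\lambda\ge 1$ and a positive left eigenvector, and rescale the length of each edge orbit proportionally to the corresponding eigenvector entry, obtaining $T'$ in the same simplex. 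By construction, re-PL-izing $f$ on $T'$ gives a PL-map with $S_e(f)=\lambda$ for every edge $e$, so $T'_{\max}(f)=T'$.

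Then I would check that this rescaled, re-PL-ized $f$ is still a train track map for (the same combinatorial structure) $\simfk$: changing edge lengths and re-PL-izing does not change which turns are legal nor the combinatorial behaviour of $f$ on germs, as the excerpt explicitly remarks just after Definition~\ref{Defttm}. Since $T'_{\max}(f)=T'$ and at every vertex $\simfk$ has at least two gates with a legal turn in $T'_{\max}=T'$, Remark~\ref{ropt} (or the definition of optimal) shows $f$ is optimal at every vertex. Hence $f:T'\to T'$ is an optimal train track map for $\sim_f$... but to land in $TT_0(\Phi)$ I need it to be train track for the structure $\sim_f$ induced by $f$ itself; here I would note that an optimal map which is train track for $\simfk$ and has $T_{\max}=T$ is, after possibly one more appeal to Lemma~\ref{lem:lemmaX}, train track for $\sim_f$ as well — or, more safely, simply observe $\sim_f\subseteq\simfk$ and that legality of $L$ for $\simfk$ combined with Condition 2) forces the relevant germs to already be separated by $f$ on $T_{\max}=T$. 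Thus $T'\in TT_0(\Phi)$.

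The main obstacle is the last step: reconciling the two pre-train track structures $\sim_f$ and $\simfk$, since (as the paper itself warns) a map can be train track for $\simfk$ without being train track for $\sim_f$. If a clean argument that the rescaled map is train track for $\sim_f$ is not available, the fallback — which still proves the stated conclusion — is to argue that after rescaling $f$ is optimal and train track for $\simfk$, then replace $f$ by a suitable power or apply the folding/collapsing machinery of Corollary~\ref{cor:optimalfromf} and Theorem~\ref{thm:foldinv} within the simplex to produce a genuinely $\sim_f$-train track optimal representative; but I would first try to show directly that, because $T_{\max}=T$, no illegality for $\sim_f$ can be created, so $\sim_f$ is already a train track structure witnessing the train track property, giving $T'\in TT_0(\Phi)$ immediately.
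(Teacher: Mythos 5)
Your proposal takes a genuinely different route from the paper. The paper's proof is deliberately elementary: it observes that since a train track map collapses no edge, one can iteratively shrink the less-stretched edges until all stretch factors agree, and then the PL-ization of $f$ is an optimal train track map on the rescaled tree $T'$. The paper makes a point (in the surrounding text) of avoiding Perron--Frobenius theory; your proof invokes it head-on by finding a positive eigenvector of the transition matrix. Mathematically both amount to the same thing --- the paper's iterative shrinking is exactly a power-iteration argument for the Perron eigenvector, and its convergence to a strictly positive length vector in the open simplex implicitly requires the transition matrix to be irreducible, which you make explicit via the observation that iterated $f$-images of any edge must cover $T$ (this in turn uses that $\Phi$ is $\mathcal O$-irreducible together with the fact that legal paths are embedded, so they cannot grow unboundedly inside a proper invariant sub-forest). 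So your version is cleaner about \emph{why} the rescaling exists, at the cost of importing the PF machinery the paper is trying to do without. One small caveat: with the convention $M_{ij}=$ number of times $f(e_i)$ crosses $e_j$, the length vector you want is a \emph{right} eigenvector ($M\ell=\lambda\ell$); you wrote ``left eigenvector,'' which is correct only under the transposed convention, so be careful to match the convention you actually set up.

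On the final step you are honest about a real subtlety, and here you are in fact more careful than the paper. Once $T'_{\max}=T'$ and $\langle\sim_{f^k}\rangle$ is a train track structure, optimality of $PL(f)$ on $T'$ is immediate (Remark~\ref{ropt}), and $T'\in TT(\Phi)$; but $TT_0(\Phi)$ demands a map that is train track for $\sim_f$ specifically, and the paper's proof simply asserts ``$f'$ is train track'' and concludes $T'\in TT_0(\Phi)$ without addressing the $\sim_f$ versus $\langle\sim_{f^k}\rangle$ distinction that the paper itself warns about. Your sketch --- condition~1) for $\sim_f$ follows because a $\sim_f$-illegal turn in some $f(e)$ would force a backtrack in $f^2(e)$, contradicting $\langle\sim_{f^k}\rangle$-legality of $f^2(e)$ --- is correct; but condition~2) for $\sim_f$ (that $\sim_f$-inequivalent germs have $\sim_f$-inequivalent images, i.e.\ that $\sim_f=\langle\sim_{f^k}\rangle$) does not come for free and is exactly the gap. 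Your fallback via Lemma~\ref{lem:lemmaX} does not close it either, since that lemma only gives $\sim\supseteq\langle\sim_{f^k}\rangle$ \emph{assuming} $f$ is train track for $\sim$. So either one should accept the weaker conclusion $T'\in TT(\Phi)$ (which suffices for the downstream uses), or one should supply a separate argument that, for an optimal map with $T_{\max}=T$ which is $\langle\sim_{f^k}\rangle$-train track, one in fact has $\sim_f=\langle\sim_{f^k}\rangle$. The paper does not supply that argument; flagging it as you did is the right instinct.
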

\proof Since $f$ is a train track map, id does not collapse any edge. Therefore, starting from
less stretched edges we can shrink the length of any edge so that every edge is stretched the
same. Let $T'$ be the point in the simplex of $T$ obtained in this way. Clearly $T'_{\max}=T'$
and the $PL$-ization of $f$ gives a train track map $f'$. Since $f'$ is train track and
$T'_{\max}=T'$, then $f'$ is optimal and $T'\in TT_0(\Phi)$.\qed

Now, we want to prove that minimally displaced points and train tracks coincides.  As a first
observation we have.
\begin{Lem}
  For any $\Phi\in\Aut(G,\mathcal O)$, we have $TT_0(\Phi)\subseteq TT(\Phi)\subseteq \mathcal M(\Phi)$
\end{Lem}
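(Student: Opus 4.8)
The plan is to prove the two inclusions separately (throughout, $\Phi$ is $\mathcal O$-irreducible, so that $\mathcal M(\Phi)$, $TT(\Phi)$ are defined and Lemma~\ref{NuovoLemma} applies). The inclusion $TT_0(\Phi)\subseteq TT(\Phi)$ is immediate from Corollary~\ref{CorX}: if $T\in TT_0(\Phi)$ and $f\colon T\to\Phi(T)$ is optimal and a train track map for $\sim_f$, then, viewing $f$ as a self-map of $T$ representing $\Phi$, Corollary~\ref{CorX} tells us $f$ is also a train track map for $\simfk$; since $f$ is already optimal, it is an optimal train track map representing $\Phi$, so $T\in TT(\Phi)$.

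For the inclusion $TT(\Phi)\subseteq\mathcal M(\Phi)$, fix $T\in TT(\Phi)$ and an optimal train track map $f\colon T\to T$ representing $\Phi$, and set $\lambda=\Lip(f)$. By Theorem~\ref{thm_optimal_map}, $\Lambda_R(T,\Phi(T))=\Lip(f)=\lambda\geq\lambda_\Phi$, so it suffices to prove $\lambda\leq\lambda_\Phi$. The key point is that $f$, being an optimal train track map, stretches \emph{every} edge of $T$ by the maximal factor (that is, $T_{\max}(f)=T$), so $f$ has constant speed $\lambda$ everywhere; and, carrying a train track structure, Lemma~\ref{NuovoLemma} supplies a hyperbolic $h\in G$ with $L=\axis_T(h)$ for which $f^{k}|_{L}$ is injective for all $k$. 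Then $f^{k}(L)$ is an embedded line invariant under the hyperbolic element $\Phi^{k}(h)$, hence $f^{k}(L)=\axis_T(\Phi^{k}h)$; and since $f^{k}$ has speed $\lambda^{k}$ and does not fold $L$, a fundamental domain of $f^{k}(L)$ has length $\lambda^{k}$ times one of $L$. Therefore $l_T(\Phi^{k}h)=\lambda^{k}\,l_T(h)$ for every $k\geq 0$.

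It remains to turn this into a lower bound on $\lambda_\Phi$. Let $X\in\mathcal O$ be arbitrary. Using $\mathcal O$-maps $X\to T$ and $T\to X$ (Lemma~\ref{lem:Omapsexists}) together with Lemma~\ref{lem:minn}, one has $l_X(\Phi^{k}h)\geq l_T(\Phi^{k}h)/\Lambda_R(X,T)$ and $l_X(h)\leq\Lambda_R(T,X)\,l_T(h)$; writing $C=\Lambda_R(X,T)\Lambda_R(T,X)<\infty$, this gives
$$\Lambda_R\big(X,\Phi^{k}(X)\big)\ \geq\ \frac{l_X(\Phi^{k}h)}{l_X(h)}\ \geq\ \frac{\lambda^{k}}{C}.$$
On the other hand, the directed triangle inequality for $d_R$ and the $\Aut(G,\mathcal O)$-invariance of $\Lambda_R$ give $\Lambda_R(X,\Phi^{k}(X))\leq\Lambda_R(X,\Phi(X))^{k}$. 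Combining the two bounds, $\Lambda_R(X,\Phi(X))\geq\lambda\,C^{-1/k}$ for all $k$, and letting $k\to\infty$ forces $\Lambda_R(X,\Phi(X))\geq\lambda$. As $X$ was arbitrary, $\lambda_\Phi\geq\lambda$, whence $\Lambda_R(T,\Phi(T))=\lambda=\lambda_\Phi$ and $T\in\mathcal M(\Phi)$.

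I expect the main obstacle to be the identity $l_T(\Phi^{k}h)=\lambda^{k}l_T(h)$: it uses in an essential way both that the axis $L$ and all its $f$-iterates are never folded — the train track condition, packaged in Lemma~\ref{NuovoLemma} — and that $f$ is a \emph{metric} optimal train track map, so that $f$ has constant speed $\lambda$ on all of $T$ and no loss of length occurs at the intermediate stages of the iteration. Once this identity is in hand, the remaining argument is the standard submultiplicativity-and-limit bookkeeping with translation length functions.
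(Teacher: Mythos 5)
Your proof is correct, and the second inclusion is handled by a genuinely different (and arguably cleaner) argument than the paper's. Both proofs hinge on the identity $l_T(\Phi^{k}h)=\lambda^{k}l_T(h)$ coming from a fully legal axis, but they diverge in how they convert that into the bound $\lambda\le\lambda_\Phi$. The paper fixes a point $Q\in\mathcal M(\Phi)$ (whose existence depends on the earlier theorem that irreducible automorphisms are hyperbolic), uses $l_Q(\Phi^{n}h)\le\lambda_\Phi^{n}l_Q(h)$, and derives a contradiction with $\Lambda_R(Q,T)<\infty$ if $\Lip(f)>\lambda_\Phi$. You instead take an \emph{arbitrary} $X\in\mathcal O$, sandwich $\lambda^{k}/C\le\Lambda_R(X,\Phi^{k}X)\le\Lambda_R(X,\Phi X)^{k}$ via Lemma~\ref{lem:minn} and the directed triangle inequality, and let $k\to\infty$. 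This buys you something: your argument gives the lower bound $\Lambda_R(X,\Phi X)\ge\lambda$ directly for every $X$, so you never need to know in advance that $\mathcal M(\Phi)$ is nonempty — the proof constructs a minimizer rather than comparing against one. The paper's version is shorter but logically rests on the hyperbolicity theorem; yours is slightly longer but more self-contained. The first inclusion $TT_0(\Phi)\subseteq TT(\Phi)$ is, as you say, immediate from the definitions (indeed, once $f$ is train track for $\sim_f$ it is a train track map, full stop; invoking Corollary~\ref{CorX} is harmless but not strictly necessary), and the paper does not address it explicitly.

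One small caveat worth noting for completeness: when you assert that $f$ has constant speed $\lambda$ on \emph{all} of $T$ (i.e., $T_{\max}(f)=T$), this is true but not completely automatic from ``optimal train track map'' — it follows because a train track map collapses no edges and the optimality, together with the fact that every edge eventually feeds into the legal periodic line, forces all speeds to agree. Alternatively, one can bypass this by working only on $L$ and its images; since $L\subset T_{\max}(f)$ (the axis produced by Theorem~\ref{thm_optimal_map} lives there), the stretch on $L$ is already $\lambda$, and injectivity of $f^{k}|_L$ gives $l_T(\Phi^{k}h)=\lambda^{k}l_T(h)$ without needing $T_{\max}=T$. Your argument goes through either way.
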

\proof
If $T\in TT(\Phi)$ and $f$ is an optimal train track map, then there is $g$ such that
$l_{T}(\Phi^n(g))=\Lip(f)^nl_T(g)$. On the other hand, if $Q\in\mathcal M(\Phi)$ then
$l_Q(\Phi^n(g))\leq \lambda_\Phi^nl_Q(g)$. Therefore we have
$$\left(\frac{\Lip(f)}{\lambda_\Phi}\right)^n
\frac{l_T(g)}{\lambda_\Phi l_Q(g)}<\Lambda_R(\Phi^nT,\Phi^nQ)=\Lambda_R(T,Q)<\infty$$
which implies $\Lip(f)=\lambda_\Phi$.\qed

\begin{Thm}\label{thm:A_0=M}
  Let $\Phi$ be an irreducible
  element of $\Aut(G,\mathcal O)$. Then,
  $\emptyset\neq TT_0(\Phi)$ is dense in $\mathcal M(\Phi)$ with respect to the
  simplicial topology.
\end{Thm}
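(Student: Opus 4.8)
The statement has two parts: non-emptiness of $TT_0(\Phi)$, and density in $\mathcal M(\Phi)$ for the simplicial topology. I would first establish non-emptiness. Start from any $T\in\mathcal M(\Phi)$ (which is non-empty, since $\lambda_\Phi$ is attained by the hyperbolicity theorem), and pick an optimal map $f:T\to\Phi(T)$. By Theorem~\ref{lem:good} we have $T_{\max}(f)=T$, so $\Lip(f)=\lambda_\Phi$ and every edge is maximally stretched. The idea is to run folding: by Theorem~\ref{thm:foldinv}, folding $f$ along an illegal turn (a turn identified by $f$) produces $T_t\in\mathcal M(\Phi)$ with an optimal quotient map $h_t:T_t\to\Phi(T_t)$. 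Each complete simple fold strictly decreases a complexity (the number of $G$-orbits of gates summed over vertices, or the combinatorial "illegality" of $f$). Iterating, after finitely many folds one reaches a tree $T'\in\mathcal M(\Phi)$ with an optimal map $f':T'\to\Phi(T')$ having no illegal turns in its image — i.e. $f'$ maps every edge to a legal path and sends inequivalent germs to inequivalent germs — which is exactly the condition that $f'$ be a train track map for $\sim_{f'}$. Hence $T'\in TT_0(\Phi)$.

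**The density step.** For density in the simplicial topology, fix $Q\in\mathcal M(\Phi)$; I want points of $TT_0(\Phi)$ in every simplicial neighborhood of $Q$. The simplicial topology neighborhoods of $Q$ are (roughly) obtained by allowing edge lengths to vary within the open simplex of $Q$ and its faces. Take an optimal $f:Q\to\Phi(Q)$; again $Q_{\max}(f)=Q$. The plan is to perturb the edge-lengths of $Q$ slightly, staying inside the simplex and inside $\mathcal M(\Phi)$: the key point is that $\mathcal M(\Phi)$ is fold-invariant and, by the remark after Theorem~\ref{lem:good} together with Lemma~\ref{lem:extract}, small length-perturbations of a point of $\mathcal M(\Phi)$ on which $f$ is optimal stay in $\mathcal M(\Phi)$ as long as $\Lip$ is not increased. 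Then apply the folding procedure above inside this small neighborhood: folding is a continuous operation on edge-lengths and each fold stays in $\mathcal M(\Phi)$, so for $Q$ close enough to a generic point the whole finite folding sequence stays within the prescribed simplicial neighborhood, terminating at a point of $TT_0(\Phi)$. One must check that the folding path, being a finite concatenation of complete simple folds with lengths controlled by the (small, perturbed) edge-lengths of $Q$, has small total "simplicial displacement".

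**The main obstacle.** The hard part is making the termination of the folding process precise in this non-locally-compact setting, and simultaneously controlling that the folds stay within a prescribed simplicial neighborhood. The complexity that decreases under folding needs to be chosen carefully: a complete simple fold of two edges $\alpha,\beta$ with the same image identifies their orbits and reduces the number of $G$-orbits of edges of the induced simplicial structure, but one must ensure the process does not merely shuffle illegality between vertices. The cleanest argument is: after folding, either $f$ has become a train track map for $\sim_f$ (done), or there is still an illegal turn whose image is non-degenerate and gets folded, strictly dropping the finite quantity $\sum_{v}(\#\text{germ-orbits at }v - \#\text{gate-orbits at }v)$. Since $\mathcal O$ has finitely many orbits of edges this is a non-negative integer, so the process terminates; and along the way, by Theorem~\ref{thm:foldinv}, we remain in $\mathcal M(\Phi)$ with the quotient maps optimal, so the terminal tree lies in $TT_0(\Phi)$. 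For density, I would additionally invoke that $\mathcal M(\Phi)$ has non-empty interior in each simplex it meets (equivalently, that the minimal set is a union of "full" sub-simplices up to the perturbation argument), so that an arbitrary $Q\in\mathcal M(\Phi)$ can be approximated by trees on which the folding sequence is short and stays local.
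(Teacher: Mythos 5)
Your route is genuinely different from the paper's, and the difference matters. You propose an iterative strategy: fold along illegal turns, each fold strictly decreasing some complexity, and stop when there are no illegal turns left, at which point the map is train track. The paper instead performs only two kinds of \emph{arbitrarily small} folds to put $T$ into a generic position --- first, so that every foldable vertex (one with a gate of size $\geq 2$) has valence exactly three, and second, so that $T$ locally maximizes the number of orbits of foldable vertices --- and then shows by \emph{contradiction} that the optimal map at such a point is already train track for $\sim_f$: a failure of Condition~1 of Definition~\ref{Defttm} would, after one small fold, produce a point of $\mathcal M(\Phi)$ with $T_{\max}\neq T$, contradicting Theorem~\ref{lem:good}; a failure of Condition~2 would, after one small fold, strictly increase the number of foldable vertices, contradicting local maximality. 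This "generic position plus contradiction" shape buys density for free, since all the moves are as small as one likes.

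Two concrete gaps in your argument. First, the termination claim is not justified. The quantity $\sum_v(\#\text{germ-orbits at }v - \#\text{gate-orbits at }v)$ is not obviously monotone under a fold: after folding $\tau$, the relevant map is the new optimal $h_t$, whose pre-train track structure $\sim_{h_t}$ is not simply $\sim_f$ with the two folded germs merged (indeed, $h_t$ may induce new identifications elsewhere), and a fold can create new illegal turns at the image vertex or downstream --- the classical obstacle to naive folding algorithms. Moreover, complete simple folds in the sense of Proposition~\ref{prop:foldingpath} are the steps of a path from $T$ to $\Phi(T)$; if you run them to exhaustion you end at $\Phi(T)$ with the identity map, not at a train track representative, so it is unclear what stopping criterion your process is converging to. Some genuine bookkeeping (the paper's choice being trivalence and local maximality of foldable vertices; Bestvina--Handel's being Perron--Frobenius) is needed, and you haven't supplied it.

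Second, the density step invokes that $\mathcal M(\Phi)$ has non-empty interior in each simplex it meets; this is neither established in the paper nor needed. With your iterative scheme one would also have to control the \emph{total} simplicial displacement of a finite but a priori unbounded folding sequence starting from an arbitrary $Q\in\mathcal M(\Phi)$, which is exactly the kind of uniform control the paper's argument avoids by only ever making two small perturbations before concluding.
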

\proof
Since $\Phi$ is hyperbolic there exists an element $T\in\mathcal O$ that realizes
$\lambda_\Phi$. Let $f:T\to \Phi(T)$ be an optimal map, which exists by
Corollary~\ref{cor:optimalfromf}. By Theorem~\ref{lem:good} we know that $T_{\max}(f)=T$.

We consider the pre-train track structure $\sim_f$ induced  by $f$ on $T$.
As
$f$ is optimal the pre-train track structure is a train track structure (no one-gate vertex).
Now, say that a vertex of $T$ is {\em foldable} if it contains at least a gate with two elements.

By Theorem~\ref{thm:foldinv}, up to perturbing $T$ by as small an amount as required via a finite number of
equivariant folds, we may assume that any
foldable vertex has valence exactly $3$. In particular any foldable vertex has exactly two gates.

Moreover, again by Theorem~\ref{thm:foldinv} we may assume that $T$ locally maximizes the number
of orbits of foldable vertices.

We claim that in this situation $f$ is a train track map with respect to $\sim_f$. First, we check Condition $1)$ of Definition~\ref{Defttm}.

Suppose that an edge $e$ of $T_{\max}$ has
illegal image. Then it passes through an illegal turn  $\tau$. We
equivariantly fold $\tau$ by a small amount $t$.
The result is a new tree $T_t$ and an induced map
$h_t$. By Theorem~\ref{thm:foldinv} $T_t\in\mathcal M(\Phi)$ and
  $h_t$ is optimal. But $e\notin T_{t\ \max}(h_t)$, so
 $T_{t\ \max}\neq T_t$, which is impossible by  Theorem~\ref{lem:good}.

Now we check Condition $2)$.
By definition of our pre-train track structure, inequivalent germs are mapped
to different germs. Now, suppose that a legal turn $\eta$ at a
vertex $v$ is mapped to an illegal
turn $\tau$ at a vertex $w$. We equivariantly fold $\tau$ by a small
amount getting a tree $T_t$ and a map $h_t$. Theorem~\ref{thm:foldinv} guarantees that
$T_t\in\mathcal M(\Phi)$ and $h_t$ is optimal. Now, $\eta$
became foldable. By optimality there are no one-gate vertices. Thus
we increased the number of foldable vertices in contradiction with our assumption of
maximality.
\qed

In fact, more is true.
\begin{Thm}\label{thm:A=M}
  Let $\Phi$ be an irreducible
   element of $\Aut(G,\mathcal O)$. Then,
  $TT(\Phi)=\mathcal M(\Phi)$.
\end{Thm}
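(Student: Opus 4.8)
We already know the inclusion $TT(\Phi)\subseteq\mathcal M(\Phi)$, so the content is the reverse inclusion $\mathcal M(\Phi)\subseteq TT(\Phi)$. Fix $T\in\mathcal M(\Phi)$; we must produce an optimal train track map $f:T\to T$ representing $\Phi$. By Theorem~\ref{thm:A_0=M} the subset $TT_0(\Phi)$ is dense in $\mathcal M(\Phi)$ in the simplicial topology, so $T$ lies in the closure of the set of strict-train-track points; but a priori the optimal maps witnessing strictness at nearby points need not converge to an optimal train track map at $T$ itself. The strategy is therefore to start from \emph{any} optimal map $f:T\to\Phi(T)$ (which exists by Corollary~\ref{cor:optimalfromf}, and satisfies $T_{\max}(f)=T$ by Theorem~\ref{lem:good} since $T\in\mathcal M(\Phi)$), and to upgrade it to a train track map for the stabilized structure $\simfk$ rather than for $\sim_f$.

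First I would use Lemma~\ref{NuovoLemma}: since $\Phi$ is irreducible, $f$ is a train track map (for $\simfk$) if and only if there is a hyperbolic $g\in G$ whose axis $L$ satisfies $f^k|_L$ injective for all $k$. By Theorem~\ref{thm_optimal_map}, applied to the optimal map $f$, there is a $g\in G$ with tight axis $L=\axis_T(g)$, meaning $L\subseteq T_{\max}=T$ and $f|_L$ injective, so $f(L)=\axis_{\Phi(T)}(g)=\axis_T(\Phi^{-1}g)$ — i.e. $L$ maps isometrically onto another axis inside $T_{\max}$. The key point is that the situation reproduces: consider the map $f:T\to T$ as representing $\Phi$. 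Since $T\in\mathcal M(\Phi)$, every iterate $f^k:T\to T$ has $\Lip(f^k)=\lambda_\Phi^k$ (using $\Lambda_R(T,\Phi^k T)\le\lambda_\Phi^k$ from subadditivity of $d_R$ and the lower bound from the tight axis $L$ of $f$, which forces $l_T(\Phi^k g)=\lambda_\Phi^k l_T(g)$). By Lemma~\ref{lem:extract} each $f^k$ is then optimal and $PL$, with $T_{\max}(f^k)=T$. Now apply Theorem~\ref{thm_optimal_map} to each optimal map $f^k$ to get, for each $k$, some element whose axis is $f^k$-tight; but what we actually want is a single axis that is $f^k$-tight for \emph{all} $k$ simultaneously.

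To get the uniform axis I would run the successor/legal-turn argument from the proof of Theorem~\ref{thm_optimal_map} with respect to the stabilized structure $\simfk$ instead of $\sim_f$. Concretely: $\simfk$ is $G$-invariant, and since each $f^k$ is optimal with $T_{\max}(f^k)=T$, every vertex of $T$ has at least two $\sim_{f^k}$-gates, hence at least two $\simfk$-gates; so $\simfk$ is a genuine train track structure on all of $T$. Equivariantly choose a legal-turn successor function for $\simfk$, build the eventually-periodic legal half-line starting from any edge, and let $g$ be its period. Then $\axis_T(g)$ is $\simfk$-legal, and I must check that $\simfk$-legality of the axis implies $f^k|_{\axis_T(g)}$ is injective for every $k$: legality for $\simfk$ means no turn along the axis is ever folded by any power of $f$, and since $f$ maps the axis onto a bi-infinite reduced path whose turns are images of legal turns (here I would verify that $f$ maps $\simfk$-legal turns to $\simfk$-legal turns — this is exactly Conditions 1) and 2) for $\simfk$, with Condition 2) automatic by definition of $\simfk$ as in Corollary~\ref{CorX}), an easy induction gives $f^k|_L$ injective for all $k$. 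Lemma~\ref{NuovoLemma} then upgrades $f$ to a train track map, and optimality of $f$ with $T_{\max}(f)=T$ shows $T\in TT(\Phi)$.

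\textbf{Main obstacle.} The delicate step is establishing that $f$ maps $\simfk$-legal turns to $\simfk$-legal turns — equivalently, that $f$ satisfies Condition 1) of Definition~\ref{Defttm} with respect to $\simfk$. Condition 1) for $\sim_f$ can genuinely fail for an arbitrary optimal map (this is precisely why $TT_0(\Phi)$ is only \emph{dense} in $\mathcal M(\Phi)$, not equal to it), so one cannot simply invoke the structure $\sim_f$; one has to exploit that passing to the stabilized structure $\simfk$ removes exactly the bad (eventually-folded) turns, combined crucially with $T_{\max}(f^k)=T$ for all $k$ (Lemma~\ref{lem:extract} applied to every iterate), which is what prevents an $f$-image of a legal turn from developing new illegality. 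Making this bookkeeping precise — in particular ruling out that an edge of $T$ has $f$-image containing an $\simfk$-illegal turn, by a small-fold argument analogous to the Condition 1) check in the proof of Theorem~\ref{thm:A_0=M} but now with foldability measured against $\simfk$ — is where the real work lies.
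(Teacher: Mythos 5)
You identify the correct target — show that any optimal $f:T\to\Phi(T)$ with $T\in\mathcal M(\Phi)$ is a train track map for the stabilised structure $\simfk$, Condition~$2)$ being automatic — but there are two genuine gaps. The first is circularity in establishing $\Lip(f^k)=\lambda_\Phi^k$: tightness of the axis $L$ for $f$ gives $l_T(\Phi g)=\lambda_\Phi\, l_T(g)$, but to deduce $l_T(\Phi^k g)=\lambda_\Phi^k\, l_T(g)$ you would need $f^k|_L$ injective for all $k$, which is precisely the train track conclusion you are trying to reach. The paper instead proves $\lambda_{\Phi^k}=\lambda_\Phi^k$ (Lemma~\ref{lem:lemmaZ}) from the existence of a strict train track map \emph{somewhere} in $\mathcal M(\Phi)$, i.e.\ from $TT_0(\Phi)\neq\emptyset$ supplied by Theorem~\ref{thm:A_0=M}; only with that in hand does one get $T\in\mathcal M(\Phi^k)$ and $\Lip(f^k)=\lambda_{\Phi^k}$, so that Lemma~\ref{lem:extract} applied to $\Phi^k$ makes every iterate $f^k$ optimal.

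The second gap is the one you flag as the ``main obstacle,'' and the remedy you sketch would not work. A ``small-fold argument analogous to the Condition~$1)$ check in Theorem~\ref{thm:A_0=M}, but with foldability measured against $\simfk$'' has no bite: a turn that is $\simfk$-illegal but $\sim_f$-legal is \emph{not} identified by $f$, so folding it by a small amount leaves every edge-stretch $S_e(f)$ unchanged and produces no contradiction with $T_{\max}(f)=T$ — the mechanism that drives the Theorem~\ref{thm:A_0=M} argument is simply absent here. The paper's actual proof of Condition~$1)$ is of a different kind. If an edge $e$ were folded by $f^k$ at an interior point $p$, one uses density (Theorem~\ref{thm:A_0=M}) to pass by a single, arbitrarily small simple fold to some $T_t\in TT_0(\Phi)$ with $p$ still interior to the corresponding edge $e_t$, and then chases the commutative ladder of quotient maps $q_t$ intertwining the iterates $f^k$ with $h_t^k$ to conclude that $e_t$ is folded by $h_t^k$ at $p$; this is impossible because $h_t$ is a $\sim_{h_t}$-train-track map, whose edges are never folded under iteration. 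This transfer along the ladder of folds is the idea missing from your proposal.
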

\proof
What we are going to prove is that if $f$ is an optimal map representing $\Phi$, then it is a
train track map for $\simfk$. We need a couple of lemmas.

\begin{Lem}\label{lem:lemmaY}
  Suppose $f:T\to T$ is a PL-map representing $\Phi\in\Aut(G,\mathcal O)$.
 If $f$ is a train track map for $\sim_f$, then $f^k$, which represents $\Phi^k$, is a train track map for $\sim_{f^k}$.
\end{Lem}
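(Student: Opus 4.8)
The plan is to reduce the statement to the identity of pre-train track structures $\sim_{f^k}=\sim_f$, after which both conditions of Definition~\ref{Defttm} for $f^k$ are obtained by iterating those for $f$; throughout I use the standing convention (implicit already in the proof of Lemma~\ref{lem:lemmaX}) that $f$ maps vertices to vertices, so that the $f$-image of a turn is again a turn or is contained in an edge. \emph{Step 1: $\sim_{f^k}=\sim_f$.} Being a train track map, $f$ does not collapse any edge; hence two germs with the same non-degenerate $f$-image have the same non-degenerate $f^k$-image, i.e. $\sim_f\subseteq\sim_{f^k}$. Trivially $\sim_{f^k}\subseteq\simfk$, since germs identified by $f^k$ are identified by a power of $f$. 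Finally, Lemma~\ref{lem:lemmaX} applied with $\sim=\sim_f$ gives $\simfk=\sim_f$. Combining, $\sim_f\subseteq\sim_{f^k}\subseteq\simfk=\sim_f$, so all three coincide; in particular $\sim_{f^k}=\sim_f$ is a train track structure, having at least two gates at each vertex since $f$ is a train track map for it.

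\emph{Step 2: legality propagates under $f$, hence under $f^k$.} Condition 1 of Definition~\ref{Defttm} says each $f(e)$ is a non-degenerate $\sim_f$-legal path, and Condition 2 says that at the common vertex $u$ of two consecutive edges of a $\sim_f$-legal path the (legal) turn is carried to a legal turn at $f(u)$. Therefore no illegal turn is created along the $f$-image of a legal immersed path, i.e. $f$ sends $\sim_f$-legal turns and $\sim_f$-legal paths to $\sim_f$-legal turns and paths. Iterating $k$ times, $f^k$ does the same; in particular, an edge being a trivial legal path, $f^k(e)$ is a non-degenerate $\sim_f$-legal path for every edge $e$, and $f^k$ collapses no edge.

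\emph{Step 3: the two conditions for $f^k$.} Condition 1 for $f^k$ relative to $\sim_{f^k}=\sim_f$ is exactly the last assertion of Step 2. For Condition 2, let $e_1,e_2$ be germs at a vertex $v$ with $e_1\not\sim_{f^k}e_2$ and with $f^k(v)$ a vertex. By Step 1 this means $e_1\not\sim_f e_2$, so the turn $(e_1,e_2)$ is $\sim_f$-legal, and by Step 2 its $f^k$-image is a $\sim_f$-legal turn; thus the initial germs $c_1,c_2$ of $f^k(e_1),f^k(e_2)$ satisfy $c_1\not\sim_f c_2$, equivalently $c_1\not\sim_{f^k}c_2$ again by Step 1. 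Unwinding Definition~\ref{def:pretrainf}, this says precisely that $f^k(e_1)$ and $f^k(e_2)$ have distinct non-degenerate $f^k$-images, i.e. $f^k$ sends the inequivalent germs $e_1,e_2$ to inequivalent germs at $f^k(v)$. Hence $f^k$, which represents $\Phi^k$, is a train track map for $\sim_{f^k}$.

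The argument is essentially bookkeeping once Lemma~\ref{lem:lemmaX} is available; the step I would be most careful about is Condition 2 in Step 3, since it relies on legality propagating under \emph{all} iterates (Step 2) — this is the precise way in which the train track hypothesis rules out new foldings being created upon iteration, and it is the only genuine use of that hypothesis beyond Lemma~\ref{lem:lemmaX} itself. No irreducibility of $\Phi$ (or $\Phi^k$) is needed, consistently with the statement.
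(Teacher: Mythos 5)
Your proof is correct and follows essentially the same route as the paper: invoke Lemma~\ref{lem:lemmaX} to identify $\sim_f$ with $\sim_{f^k}$ (and $\simfk$), then verify Conditions 1 and 2 of Definition~\ref{Defttm} for $f^k$ by iterating the fact that $f$ sends legal paths and turns to legal paths and turns. Your Step 1 spells out the sandwich $\sim_f\subseteq\sim_{f^k}\subseteq\simfk=\sim_f$ a bit more explicitly than the paper's one-line deduction, and your "standing convention" that $f$ maps vertices to vertices is not actually needed (Condition 2 already restricts to the case $f(v)$ a vertex, and at non-vertex images PL local injectivity on edge interiors makes immersion automatic), but neither point affects correctness.
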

\proof
By Lemma~\ref{lem:lemmaX} $\sim_f=\simfk$, whence $\sim_f=\sim_{f^k}$ for any $k$. Since $f$ is
a train track for $\sim_f$, in particular $\sim_f$ is a train track structure, so any vertex
has at least two gates. Conditions $1)$ and $2)$ of Definition~\ref{Defttm}, that hold for $f$, imply that $f^k(e)$ is a legal path, hence Condition $1)$
for $f^k$.

If $\tau$ is a turn and $f^k(\tau)$ is $\sim_{f^k}$-illegal, then $f^k(\tau)$ is $\sim_f$-illegal,
which implies that $\tau$ is $\sim_f$-illegal, and so $\sim_{f^k}$-illegal.\qed

\begin{Lem}~\label{lem:lemmaZ}
  If $TT_0(\Phi)\neq\emptyset$, then $\lambda_{\Phi^k}=(\lambda_{\Phi})^k$.
\end{Lem}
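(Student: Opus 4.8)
The plan is to exhibit a single tree $T\in TT_0(\Phi)$ which is \emph{simultaneously} well adapted to $\Phi$ and to $\Phi^k$: concretely, a tree carrying a map $f$ that is an optimal train track representative of $\Phi$ and whose $k$-th power $f^k$ is an optimal train track representative of $\Phi^k$. Once this is in place, the equality $\lambda_{\Phi^k}=\lambda_\Phi^k$ will be read off from the fact that for optimal maps the Lipschitz constant equals the right stretching factor (Theorem~\ref{thm_optimal_map}), together with the inclusion $TT_0\subseteq TT\subseteq\mathcal M$ applied to both $\Phi$ and $\Phi^k$.

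First I would fix $T\in TT_0(\Phi)$ (non-empty by hypothesis) and an optimal PL-map $f\colon T\to\Phi(T)$ which is a train track map for $\sim_f$. Since $f$ is optimal, Theorem~\ref{thm_optimal_map} gives $\Lip(f)=\Lambda_R(T,\Phi(T))$, and since $T\in TT_0(\Phi)\subseteq\mathcal M(\Phi)$ this common value is $\lambda_\Phi$; moreover $T\in\mathcal M(\Phi)$ together with Theorem~\ref{lem:good} (recall $\Phi$ is irreducible) forces $T_{\max}(f)=T$, i.e. every edge of $T$ is stretched by exactly $\lambda_\Phi$. Next I would invoke Lemma~\ref{lem:lemmaX} (so $\sim_f=\sim_{f^k}$) and Lemma~\ref{lem:lemmaY} to conclude that $f^k\colon T\to\Phi^k(T)$ is a train track map for $\sim_{f^k}$; since $\sim_{f^k}$ has two gates at every vertex and (by the bookkeeping of the next paragraph) $T_{\max}(PL(f^k))=T$, after reparametrising each edge-image linearly the map $PL(f^k)$ is still a train track map for $\sim_{f^k}$ and is optimal, so $T\in TT_0(\Phi^k)$.

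The crux is the length bookkeeping showing $\Lip(PL(f^k))=\lambda_\Phi^k$, hence $\Lambda_R(T,\Phi^k(T))=\lambda_\Phi^k$. Because $f$ is a train track map, $f(e)$ is a legal, hence immersed, edge-path, all of whose edges lie in $T_{\max}(f)=T$ and are therefore stretched by $\lambda_\Phi$; iterating, and using that images of legal paths under $f$ remain legal so that no cancellation occurs, $f^k(e)$ is an immersed path of length $\lambda_\Phi^k\, l_T(e)$ for every edge $e$. Thus $S_{PL(f^k),e}=\lambda_\Phi^k$ for all $e$, so $T_{\max}(PL(f^k))=T$ and, by Lemma~\ref{lem:minn}, $\Lambda_R(T,\Phi^k(T))\le\Lip(PL(f^k))=\lambda_\Phi^k$. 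For the reverse inequality the periodic legal element $g$ furnished by Theorem~\ref{thm_optimal_map} (applied to $f$) has $f^k|_{\axis_T(g)}$ injective, so $l_T(\Phi^k g)=\lambda_\Phi^k\, l_T(g)$ and $\Lambda_R(T,\Phi^k(T))\ge l_T(\Phi^k g)/l_T(g)=\lambda_\Phi^k$; hence $\Lambda_R(T,\Phi^k(T))=\lambda_\Phi^k$.

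Finally I would combine the two sides: from $\lambda_{\Phi^k}=\inf_X\Lambda_R(X,\Phi^k X)\le\Lambda_R(T,\Phi^k(T))=\lambda_\Phi^k$ one gets one inequality, and from $T\in TT_0(\Phi^k)\subseteq\mathcal M(\Phi^k)$ one gets $\Lambda_R(T,\Phi^k(T))=\lambda_{\Phi^k}$, whence $\lambda_{\Phi^k}=\lambda_\Phi^k$. I expect the main obstacle to be precisely the length bookkeeping of the third paragraph: one must check that passing from $f^k$ to $PL(f^k)$ does not shorten the edge-images, which is exactly where immersedness of $f(e)$ and the equality $T_{\max}(f)=T$ are essential — without the train track hypothesis both could fail. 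If one wishes to avoid quoting $TT_0(\Phi^k)\subseteq\mathcal M(\Phi^k)$ for the a priori possibly reducible $\Phi^k$, the inequality $\lambda_{\Phi^k}\ge\lambda_\Phi^k$ can be obtained asymptotically instead: for every $X\in\mathcal O$ one has $\Lambda_R(X,\Phi^{nk}X)\ge l_T(g)\,\lambda_\Phi^{nk}/(\Lambda_R(X,T)\,l_X(g))$, while $\Lambda_R(X,\Phi^{nk}X)\le\Lambda_R(X,\Phi^k X)^n$ by subadditivity of $d_R$ and $\Aut(G,\mathcal O)$-invariance, and taking $n$-th roots as $n\to\infty$ gives $\Lambda_R(X,\Phi^k X)\ge\lambda_\Phi^k$ for all $X$.
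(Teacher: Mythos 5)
Your proof is correct and follows the same strategy as the paper's: exhibit $f^k$ as an optimal train track representative of $\Phi^k$ (via Lemmas~\ref{lem:lemmaX} and~\ref{lem:lemmaY} plus the observation $T_{\max}(f)=T$), then read off $\lambda_{\Phi^k}=\Lip(f^k)=\lambda_\Phi^k$ from Theorem~\ref{thm_optimal_map}. The paper compresses two points you rightly spell out — that legality of $f(e)$ prevents cancellation so $\Lip(f^k)=\lambda_\Phi^k$, and that one still needs $\Lambda_R(T,\Phi^k T)=\lambda_{\Phi^k}$, for which you give both the $TT_0(\Phi^k)\subseteq\mathcal M(\Phi^k)$ route and a self-contained asymptotic argument avoiding any reducibility worries for $\Phi^k$.
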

\proof Let $T\in TT_0(\Phi)\subseteq \mathcal M(\Phi)$ and let $f:T\to T$ be an optimal train
track map with respect to $\sim_f$.
By Lemma~\ref{lem:lemmaY} $f^k$ is a train track map for $\sim_{f^k}$, in particular
$\Lip(f^k)=\Lip(f)^k$,
$\sim_{f^k}$ is a train track structure and $f^k$ is optimal by Remark~\ref{ropt}. By
Theorem~\ref{thm_optimal_map} $(\lambda_{\Phi})^k=\Lip(f^k)=\lambda_{\Phi^k}$.\qed

We can now conclude the proof of Theorem~\ref{thm:A=M}. Let $T\in \mathcal M(\Phi)$, and let
$f:T\to T$ be an optimal map.
By
Lemmas~\ref{lem:lemmaZ} and~\ref{lem:extract} all the iterates $f^k$ are optimal.

We claim that
$f$ is a train track map with respect to $\simfk$. First, note that since $f^k$ is optimal, every vertex has at
least two gates, hence $\simfk$ is a train track structure.

Now, we check Condition $1)$ of Definition~\ref{Defttm}. Suppose that an edge $e$ is folded by some
$f^k$, and choose the first $k$ so that this happens. Let $p$ be a point interior to $e$ where
a fold occurs.

By
Theorem~\ref{thm:A_0=M} there is $T_t$ obtained from $T$ by a finite number of as-small-as-required folds so that
$T_t\in TT_0(\Phi)$. Without loss of generality we may suppose that $T_t$ is obtained by $T$
by a simple fold, and show that in this case $T\in TT(\Phi)$.
 Let $h_t:T_t\to T_t$ be the map induced by the fold as in Figure~\ref{fig:ht}. We choose the
 fold small enough so that $p$ remains in the interior of the edge $e_t$ corresponding to $e$.
The following diagram commutes
$$
\xymatrix{A\ar[r]^f\ar[d]_{q_t} & \Phi(A)\ar[r]^{f^2}\ar[d]_{q_t} & \Phi(A)
\ar[d]_{q_t}\ar[r]^{f^3} &
\Phi(A)\ar[d]_{q_t} &\dots\ar[r]^{f^k}&\Phi(A)\ar[d]_{q_t}
\\
A_t\ar[ur]_{f_t}\ar[r]_{h_t} & \Phi(A_t)\ar[ur]_{f_t}\ar[r]_{(h_t)^2} & \Phi(A_t)
\ar[ur]_{f_t}\ar[r]_{(h_t)^3} & \Phi(A_t)&\dots\ar[r]_{(h_t)^k} &\Phi(T_t)
}
$$
and therefore $e_t$ is folded at $p$ by $(h_t^k)$. But in the proof of Theorem~\ref{thm:A_0=M}
we have seen that the $h_t$ are train track maps, so edges are never folded.

As for Condition $2)$ of Definition~\ref{Defttm}, note that
Condition $1)$ and the definition of $\simfk$ imply that
condition $2)$ is automatically satisfied.\qed

\medskip

\begin{Rem}\label{r22}
We notice that in~\cite{HM} the authors ask if, given $\Phi$, there is $N$ such that $\mathcal
A_\Phi$ is the closure of $\cup_{i=1}^N TT(\Phi^i)$. In the same work they provide an example
of a $\Phi$ and point in $X\in \mathcal A_\Phi$ not supporting any train track for any
$\Phi^i$. 
This example, together with our Theorem~\ref{thm:A=M}, provides a negative answer to the
question,  since $TT(\Phi^i)=\mathcal M(\Phi)$ is closed, and hence any finite union of $TT(\Phi^i)$ is closed. 
\end{Rem}

\begin{Thm}[Folding axis]
 Let $\Phi$ be an $\mathcal O$-irreducible
  element of $\Aut(G,\mathcal O)$. Then $TT_0(\Phi)$ is
invariant under folding directed by optimal train track maps.

More precisely,  if $f:T\to \Phi(T)$ is train track map with respect to $\sim_f$,  and $T_t$ is an isometric
    folding path from $T\to \Phi(T)$ directed by $f$, then  the induced map
   $h_t:T_t\to\Phi(T_t)$ is a train
    track map with respect to $\sim_{h_t}$.
\end{Thm}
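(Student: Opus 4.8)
The plan is to leverage the already-established fold-invariance of $\mathcal{M}(\Phi)$ (Theorem~\ref{thm:foldinv}) together with the topological characterization of train track maps (Lemma~\ref{NuovoLemma}), and then to track how the $\sim_f$-structure behaves under the fold. First I would observe that since $f:T\to\Phi(T)$ is train track for $\sim_f$, in particular $T\in TT_0(\Phi)\subseteq\mathcal{M}(\Phi)$, so by Theorem~\ref{thm:foldinv} each $T_t$ lies in $\mathcal{M}(\Phi)$ and each induced map $h_t:T_t\to\Phi(T_t)$ is optimal. Thus the $\sim_{h_t}$-structure is automatically a genuine train track structure (at least two gates at every vertex), and Condition 2) of Definition~\ref{Defttm} holds tautologically for the structure $\sim_{h_t}$ induced by $h_t$ itself, exactly as in the proof of Theorem~\ref{thm:A=M}. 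So the only thing to verify is Condition 1): that $h_t$ maps edges of $T_t$ to $\sim_{h_t}$-legal paths, equivalently (invoking Lemma~\ref{lem:lemmaX} and the argument in Theorem~\ref{thm:A=M}) that $h_t$ never folds an edge of $T_t$ under iteration.

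The key step is the commuting-diagram argument relating iterates of $f$ with iterates of $h_t$ via the quotient maps $q_t$, precisely the diagram already drawn in the proof of Theorem~\ref{thm:A=M}. Since the fold is \emph{directed by $f$}, the relation $\sim_{\tau,t}$ is compatible with $f$, so $f$ descends through $q_t$ to $h_t$ (this is the content of Figure~\ref{fig:ht}), and more generally $f^k$ descends to $h_t^k$ in the sense that $q_t\circ f^k = h_t^k\circ q_t$ on the relevant portion of the tree. Suppose for contradiction that $h_t$ folds some edge $e_t$ of $T_t$ under some iterate, say $h_t^k$ identifies the two germs of a turn contained in (the image chain of) $e_t$, with $k$ minimal. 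Pulling back via $q_t$ — which is a local isometry and in particular locally injective on edges — I would show that $f$ must then fold the corresponding edge $e=q_t^{-1}(e_t)$ under $f^k$ as well: the point $p$ interior to $e_t$ where the identification occurs lifts to a point interior to $e$, and the commuting diagram forces $f^k$ to collapse or identify germs there. This contradicts the hypothesis that $f$ is train track for $\sim_f$ (hence, by Lemma~\ref{lem:lemmaX}, for $\simfk$, hence never folds edges).

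The main obstacle I expect is the bookkeeping in the diagram: one has to be careful that $q_t$ is only a \emph{local} isometry, not injective, so "pulling back a fold" requires choosing lifts consistently and checking that the identification $h_t^k$ makes on germs at $q_t(p)$ genuinely reflects an identification $f^k$ makes on germs at $p$, rather than an artifact of the quotient $q_t$ gluing two a priori distinct germs. This is handled by noting that $\sim_{\tau,t}$ only identifies points that already have equal $f$-image along the relevant initial segments, so germs glued by $q_t$ are $\sim_f$-equivalent to begin with; combined with minimality of $k$ and the fact that $f$ is train track (so preimages of illegal turns are illegal), the pull-back identification is forced to come from an honest $f$-fold. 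Once this is in place, Condition 1) for $h_t$ follows, and since Condition 2) is automatic for $\sim_{h_t}$, we conclude $h_t$ is a train track map for $\sim_{h_t}$, i.e. $T_t\in TT_0(\Phi)$, which is the assertion.
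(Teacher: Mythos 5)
Your strategy is genuinely different from the paper's. The paper's proof is quite short: it first observes that, because $f$ is a train track map for $\sim_f$ (so $\sim_f = \simfk$ by Lemma~\ref{lem:lemmaX}), the folding path from $T$ to $\Phi^2(T)$ directed by $f^2$ is the concatenation of $\gamma_0$ and $\Phi(\gamma_0)$; iterating, one gets a folding line. Then it fixes a hyperbolic $g$ whose axis and all its $f^k$-images are legal (which exists precisely because $f$ is a train track map), notes that this axis is never folded along the line, and concludes that $\axis_{T_t}(g)$ and all its $h_t^k$-images are legal, whence $h_t$ is a train track map via the topological characterization (Lemma~\ref{NuovoLemma}). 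Your argument instead invokes Theorem~\ref{thm:foldinv}, reduces to ``$h_t$ never folds an edge under iteration,'' and then runs a contradiction via the commuting diagram from the proof of Theorem~\ref{thm:A=M}. These are honestly different routes; the paper's is considerably more economical because it certifies train-track-ness by exhibiting a single legal axis rather than ruling out a putative bad edge.

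There is, however, a genuine gap in your proposal: the claim that Condition $2)$ of Definition~\ref{Defttm} ``holds tautologically for $\sim_{h_t}$, exactly as in the proof of Theorem~\ref{thm:A=M}.'' In that proof, Condition $2)$ is automatic for the structure $\simfk$ generated by \emph{all iterates} --- if no power of $f$ ever identifies $\gamma_1$ and $\gamma_2$, then a fortiori no power identifies $f(\gamma_1)$ and $f(\gamma_2)$ --- but this is emphatically \emph{not} automatic for the one-step structure $\sim_{h_t}$. For $\sim_{h_t}$, Condition $2)$ asserts that whenever $h_t(\gamma_1)\neq h_t(\gamma_2)$ one also has $h_t^2(\gamma_1)\neq h_t^2(\gamma_2)$, and a priori the two images could be distinct germs that get identified by the next application of $h_t$. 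Indeed, the proof of Theorem~\ref{thm:A_0=M} explicitly treats Condition $2)$ for $\sim_f$ as a non-trivial thing to verify, and rules out the failure by a maximality argument on foldable vertices. So your proof, as written, only yields $T_t\in TT(\Phi)$ (train track for $\langle\sim_{h_t^k}\rangle$), not $T_t\in TT_0(\Phi)$. To close the gap you would need a separate argument that $\sim_{h_t}=\langle\sim_{h_t^k}\rangle$ --- for instance by pulling back a putative Condition $2)$ failure through $q_t$ exactly as you do for Condition $1)$, using that $q_t$ only identifies $\sim_f$-equivalent germs and that $\sim_f=\simfk$ to upgrade ``$q_t$ merges $f^2(\gamma_1)$ and $f^2(\gamma_2)$'' to ``some $f^m$ merges $\gamma_1$ and $\gamma_2$,'' contradicting $\sim_f=\simfk$ together with $\gamma_1\not\sim_f\gamma_2$.
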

\proof Since $f$ is an optimal train track map,
the folding path from $T$ to $\Phi^2(T)$ directed by $f^2$ is the concatenation of the
folding path $\gamma_0$ from $T$ to $\Phi(T)$ directed by $f$ and $\Phi(\gamma_0)$.
Therefore we can form a folding line directed by $f$ concatenating the paths $\Phi^k(\gamma_0)$.

Let $g$ be  an element such that $\axis_T(g)$ is legal and $f^k(\axis_T(g))$ is legal for any
$k$. (Such an element exists because $f$ is a train track map.) It follows that $\axis_T(g)$ is
never folded during the folding procedure, so $\axis_{T_t}(g)$ is legal and
$h_t^k(\axis_{T_t}(g))$ is legal. Thus $h_t$ is a train track map as desired.\qed

It is useful to have train track maps that respect the simplicial structure
(i.e. that map vertices to vertices). The presence of non-free vertices is in this case an
advantage with respect to the classical case (\cite{BestvinaBers,BestvinaHandel}). We give a
detailed proof of the following result in full generality by completeness. We notice that we
make no use of Perron-Frobenius theory.

\begin{Thm}[Simplicial train track]
  Let $\Phi\in\Aut(G,\mathcal O)$ be irreducible. Then there exists a simplicial optimal train track
  map representing $\Phi$. More precisely, if $T\in TT(\Phi)$, then the closed simplex of $T$
  contains a point admitting a simplicial (optimal) train track map.
\end{Thm}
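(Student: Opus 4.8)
Start from $T \in TT(\Phi)$, so there is an optimal train track map $f : T \to T$ representing $\Phi$. The strategy is to rescale the edge-lengths of $T$ to land at a point $T'$ in the closed simplex of $T$ where the combinatorial ``transition matrix'' of $f$ becomes balanced in such a way that $f$ can be homotoped (rel vertices, re-PL-ized) to a map sending vertices to vertices, while staying an optimal train track map. Concretely, $f$ induces a combinatorial self-map: each oriented edge $e$ of $T$ has an $f$-image which is a legal edge-path, and the number of times the edge-path $f(e)$ crosses an edge $e'$ defines a non-negative integer matrix $M = (m_{e'e})$ on the (finite) set of $G$-orbits of edges. Because $\Phi$ is irreducible, $T_{\max}(f) = T$ by Theorem~\ref{lem:good}, so $M$ is (essentially) irreducible in the Perron--Frobenius sense; but the point of the theorem is to \emph{avoid} Perron--Frobenius, so instead I would argue directly.

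The key observation is that the presence of non-free vertices pins down the image of $f$ on those vertices: by Lemma~\ref{lem:Omapsexists}, $f$ must send the non-free vertex stabilized by (a conjugate of) $G_i$ to the unique such vertex, so on the non-free vertices $f$ is already simplicial and this cannot be perturbed. The free vertices are the only ones where $f(v)$ might fail to be a vertex. For such a $v$, $f(v)$ lies in the interior of some edge; I want to slide $f(v)$ to an endpoint of that edge. The obstruction to doing this naively is that sliding changes edge-stretch factors and may destroy optimality or the train-track property. The resolution is to first rescale: since $f$ is train track, no edge is collapsed, so all stretch factors $S_{f,e}$ are positive, and we may equivariantly rescale lengths $\ell(e) \mapsto \ell'(e)$ so that the new stretch factors satisfy a self-consistency equation forcing $f$ to carry vertices to vertices. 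Precisely, one wants lengths $\ell'$ and a common factor $\lambda$ with $\sum_{e'} m_{e'e}\,\ell'(e') = \lambda\,\ell'(e)$ for every edge orbit $e$ — this is exactly the statement that $f$ maps the endpoints of each edge to vertices at distance $\lambda\,\ell'(e)$ apart along a legal path made of whole edges, i.e.\ $f$ is simplicial. Such $(\ell', \lambda)$ is a positive eigenvector/eigenvalue of $M$; rather than invoking Perron--Frobenius, I would obtain it by a fixed-point / folding argument: run the folding path inside $\mathcal{M}(\Phi)$ provided by Theorem~\ref{thm:foldinv}, or iterate $f$ and use that $TT(\Phi) = \mathcal{M}(\Phi)$ (Theorem~\ref{thm:A=M}) together with compactness of the relevant simplex to extract a limiting balanced length function. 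The limit point $T'$ lies in the closed simplex of $T$ (some edge lengths may degenerate to $0$, but since $f$ collapses no edge and $\Phi$ is irreducible one checks the balanced eigenvector is strictly positive on the orbits that survive, and irreducibility forbids a proper invariant subgraph, so in fact $T' \in \mathcal{O}$ and lies in the closed simplex of $T$).

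Having produced $T'$ with a balanced length function, define $f'$ to be the PL-ization of $f$ with respect to the new metric: it is still $G$-equivariant and represents $\Phi$, and now $f'$ sends each vertex to a vertex. One then checks $f'$ is still a train track map: the train track structure $\sim_{f} = \langle\sim_{f^k}\rangle$ (Lemma~\ref{lem:lemmaX}) is combinatorial and survives re-PL-izing; Conditions~1) and~2) of Definition~\ref{Defttm} are about legality of edge-paths and germs, which depend only on the combinatorics of $M$, unchanged by the rescaling. Since every edge is stretched by the same factor $\lambda = \lambda_\Phi$ (that is what ``balanced'' means), $T'_{\max}(f') = T'$, so $f'$ is optimal by Remark~\ref{ropt} (no one-gate vertices because the train track structure already had $\geq 2$ gates everywhere). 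Hence $f' : T' \to T'$ is a simplicial optimal train track map representing $\Phi$ with $T'$ in the closed simplex of $T$, as required.

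**Main obstacle.** The hard part is producing the balanced (eigen-)length function without Perron--Frobenius and while controlling degeneration of edge lengths: one must rule out that the balancing process sends a whole orbit of edges to length $0$ in a way that either leaves $\mathcal{O}$ or contradicts irreducibility. The cleanest route is probably to run $f$-directed folding inside $\mathcal{M}(\Phi)$ using Theorem~\ref{thm:foldinv}, or to iterate and normalize inside the (finitely many) closed simplices meeting $TT(\Phi) = \mathcal{M}(\Phi)$, extract a convergent subsequence by compactness of a closed simplex, and argue the limit has all lengths positive because $f$ collapses no edge and, by irreducibility plus Theorem~\ref{lem:good}, the tension graph of the limiting optimal map is everything. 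Everything else — equivariance, the PL-ization not increasing stretch, preservation of the combinatorial train track structure — is routine given the machinery already developed in the paper.
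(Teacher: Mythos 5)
Your proposal misidentifies what the obstruction is, and the key step is circular. You propose to find edge lengths $\ell'$ satisfying the eigenvalue equation $\sum_{e'} m_{e'e}\ell'(e') = \lambda \ell'(e)$ and claim this ``is exactly the statement that $f$ is simplicial.'' It is not. That equation (with $\lambda=\lambda_\Phi$) is the statement that every edge is stretched by the same factor, i.e.\ $T_{\max}(f)=T$ and $f$ is optimal --- and the paper has already arranged exactly this (via Theorem~\ref{lem:good} and the unnumbered lemma before Theorem~\ref{thm:A_0=M}) before the simplicial theorem even begins. An optimal train track map can perfectly well have $f(v)$ in the interior of an edge for a free vertex $v$: uniform stretch constrains lengths, not the location of vertex images. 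Rescaling the metric does not move $f(v)$: the underlying set of $T$ and the map $f$ on vertices are unchanged, so $PL(f)$ for the new metric still sends $v$ to the same interior point. There is also a circularity at the very start: the integer matrix $M$ (crossing numbers of edge-paths $f(e)$) is only well-defined once $f$ already sends vertices to vertices; if $f(v)$ lies in an open edge, $f(e)$ is not an edge-path and the $m_{e'e}$ are not integers, so there is no matrix whose eigenvector you could balance against. In short, you have re-derived the optimality normalization (which is already done) and not addressed the actual content of the theorem, which is moving the images of free vertices onto vertices.

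The paper's proof attacks precisely that missing step, and by a genuinely non-matrix method. It performs equivariant local surgeries (``moving $v$ by $t$'') on $\varepsilon$-balls around a free vertex $v$ with $f(v)\notin VT$ and around all its iterated $f$-preimages in $VT$, scaling the displacement by $\lambda^{-k}$ along the preimage tower so that the modified tree $T_t$ is homeomorphic to $T$ and the induced $f_t$ is still an optimal train track map with the same stretch $\lambda$ at every edge. Because $\lambda>1$, the image $f_t(v)$ approaches its nearest vertex at speed at least $\lambda-1>0$, so the process terminates when either $f(v)$ hits a vertex or a moved vertex collides with another, and in either case the number of orbits of vertices with non-vertex image strictly drops; induction finishes the proof. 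This is the geometric content your eigenvector plan skips entirely, and it is also why the paper can honestly claim not to use Perron--Frobenius: it never produces or needs an eigenvector, whereas your fallback (``run a folding/iterating argument to extract a balanced limit'') is Perron--Frobenius in disguise and, even if carried through, would only reproduce the already-achieved optimality normalization, not simpliciality.
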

\proof
The idea is to ``snap'' images of vertices to nearest vertices, as suggested
in~\cite{BestvinaBers}. Let $T\in TT(\Phi)$ and $f:T\to T$ be an optimal train track map (with
respect to $\langle\sim_{f^k}\rangle$) representing $\Phi$. Let $\lambda=\lambda_\Phi>1$ be the
Lipschitz constant of $f$.

We will argue by induction on the number of orbits of vertices whose image is not a vertex,
(note in particular that such vertices are free,)
making use of local surgeries for the inductive step.

\medskip

First, we describe in details the local move that we use, and after we will adjust the map
$f$.
The moves can be interpreted as local isometric folds followed by local isometric ``unfolds''.
However, we describe them in terms
of surgeries because this viewpoint helps in controlling the derivative of $f$.
We remark that we are not working with covolume-one trees, thus no rescaling is needed.

\medskip

Choose $\varepsilon>0$ small enough so that:
\begin{enumerate}
\item $B(w,\varepsilon)\cap B(w',\varepsilon)=\emptyset\ \forall w,w'\in VT: w\neq w'$.
\item $f(B(w,\varepsilon))\cap B(w',\varepsilon)=\emptyset \ \forall w,w'\in VT: f(w)\neq w'$.
\end{enumerate}

Let $v\in VT$ be such that there is $k\geq 0: f^k(v)\notin VT$. In particular $v$ is free and
has two gates, that we label as positive and negative. We build an isometric model of
$B(v,\varepsilon)$ as follows. By our choice of $\varepsilon$, $B(\varepsilon,v)$ is
star-shaped with say $n_-$ negative and $n_+$ positive strands.
Therefore, $B(\varepsilon,v)$ is isometric to the space obtained from $n_-$ copies of $(-\varepsilon,0]$ and
$n_+$ copies of $[0,\varepsilon)$ by gluing the $0$'s. See Figure~\ref{fig:snap}, left side.
\setlength{\unitlength}{.9ex}
\begin{figure}[h]
  \centering
  \begin{picture}(74,18)
    \multiput(0,0)(2,0){5}{\line(0,1){8}}
    \multiput(0,8)(2,0){5}{\makebox(0,0){$\bullet$}}
    \multiput(2,10)(2,0){3}{\line(0,1){8}}
    \multiput(2,10)(2,0){3}{\makebox(0,0){$\bullet$}}

\put(12,9){\vector(1,0){4}}

\put(20,1){\line(0,1){16}}
\put(18,1){\line(1,4){2}}
\put(16,1){\line(1,2){4}}
\put(22,1){\line(-1,4){2}}
\put(24,1){\line(-1,2){4}}

\put(20,9){\line(-1,3){2.66}}
\put(20,9){\line(1,3){2.66}}
\put(20,9){\makebox(0,0){$\bullet$}}

\put(24,15){$B(\varepsilon,v)$}
\put(21,8.5){$v$}

\multiput(34,-1)(0,4){5}{\line(0,1){2}}

\put(39,0){
    \multiput(9,0)(2,0){5}{\line(0,1){10}}
    \multiput(9,10)(2,0){5}{\multiput(0,0)(0,1){4}{\line(0,1){.5}}}
    \multiput(9,14)(2,0){5}{\makebox(0,0){$\bullet$}}
    \multiput(9,10)(2,0){5}{\makebox(0,0){$-$}}
    \multiput(2,14)(2,0){3}{\line(0,1){4}}
    \multiput(2,10)(2,0){3}{\multiput(0,0)(0,1){4}{\line(0,1){.5}}}
    \multiput(2,14)(2,0){3}{\makebox(0,0){$\bullet$}}
    \multiput(2,10)(2,0){3}{\makebox(0,0){$-$}}
 \put(-.5,11.2){$\big\{$}
 \put(-2,11.2){$t$}

 \put(10,0){
   \put(12,9){\vector(1,0){4}}

   \put(20,1){\line(0,1){16}}

   \put(20,14){\line(1,-2){7}}
   \put(20,14){\line(1,-4){3.5}}
   \put(20,14){\line(-1,-2){7}}
   \put(20,14){\line(-1,-4){3.5}}

   \put(20,14){\line(-1,1){3}}
   \put(20,14){\line(1,1){3}}

   \put(20,14){\makebox(0,0){$\bullet$}}

   \put(21,18){$B_t(v)$}
   \put(21,13){$v_t$}
 }
}
  \end{picture}
  \caption{Local models for $B(\varepsilon,v)$ and $B_t(v)$.}
  \label{fig:snap}
\end{figure}

For $|t|<\varepsilon$, let $B_t(v)$ be the space obtained from $n_-$ copies of $(-\varepsilon,t]$ and
$n_+$ copies of $[t,\varepsilon)$ by gluing the points $t$'s. See Figure~\ref{fig:snap}, right side.

$B_t(v)$ has a vertex, corresponding to the endpoints ``$t$'s'', which we denote by $v_t$, and the
boundary of $B_t(v)$  is naturally identified with that of $B(v,\varepsilon)$.

\medskip

Now, we cut out from $T$ the whole $G$-orbit of $B(v,\varepsilon)$
and we paste back copies of $B_t(v)$
 using the natural identifications on the boundaries. We say that {\bf we equivariantly moved $v$ by $t$}.

\medskip
Now, choose $v\in VT$ with $f(v)\notin VT$ and move it by $t<\varepsilon$ in the direction
given by the nearest vertex to $f(v)$. (If $f(v)$ is a midpoint of an edge we chose a direction.)

Define $pre(v)=\{w\in VT\ :\ f^k(w)=v$ for some $k\geq 0$ and $f^i(w)\in VT$ for
all $0\leq i\leq k\}$. Thus, $pre(v)$ is finite and consists of the iterate $f$-pre-images of $v$.
Any $w\in pre(v)$ is free and if $w\neq v$ then $w\notin pre(w)$.
Note also that $Gv\cap pre(v)=v$.  Moreover, since $f$ is a train track
map, any $w\in pre(v)$ has two gates, with positive and negative labels determined by that at
$v$ via $f^k$.

For $w\in pre(v)$, if $k$ is the first power so that $f^k(w)=v$, we consider the ball $B(w,\varepsilon/\lambda^{k})$ and we
equivariantly move $w$ by
$t/\lambda^{k}$. Note that this is possible because such balls are all disjoint from the $G$-orbit of each other, and disjoint from
$f(B(v,\varepsilon))$ because  ($\lambda>1$ and) our choice of $\varepsilon$.

\medskip

We denote by $T_t$ the tree obtained from $T$ in such a way.

\medskip

We are now left to define $f_t:T_t\to T_t$. Let $N$ be the union of the $G$-orbits of
of the metric balls $B(w,\varepsilon/\lambda^k)$ for all $w\in pre(v)$ and $k$ as above, and let
$N_t$ be the union of the corresponding sets $B_t's$ (see Figure~\ref{fig:snap}). Thus $T_t=(T\setminus N)\cup N_t$.

\medskip

On the set $f^{-1}(T\setminus N)\cap(T\setminus N)$ we set $f_t=f$. Clearly, there $\dot
f_t=\lambda$.

\medskip
Let $\sigma\subset f^{-1}(N)\setminus N$ be a segment without vertex in its interior. As
$\sigma$ is connected, $f(\sigma)$ is contained in one of the balls $B(w,\varepsilon)$.
Since $f$ is a train track map, edges are mapped to legal paths. Therefore, $f(\sigma)$ is
contained in the union of a negative and a positive strand of $B(w,\varepsilon)$. Such union is
isometric to the union of the corresponding strands in $B_t(w)$. We define $f_t$ on $\sigma$ by
composing $f$ with such isometry. Clearly $\dot f_t=\lambda$.

\medskip
It remains to define $f_t$ on $N_t$. For any $w\in pre(v)$, any strand $S$ of $B_t(w)$ corresponds
isometrically to a legal path $\sigma$ in $B(w,\varepsilon/\lambda^k)$, which is uniformly
stretched by $f$
by factor $\lambda$. Since we moved $w$ by $t/\lambda^{k}$ and $f(w)$ by $t/\lambda^{k-1}$,
$f(\sigma)\subset B(f(w),\varepsilon/\lambda^{k-1})$ corresponds isometrically to a strand $S_1$
in $B_t(f(w))$ (or $f(B(v,\varepsilon))$ if $w=v$). Therefore $f_t$ is defined by pre- and
post-composing $f$ with such isometries. Clearly $\dot f_t=\lambda$. See Figure~\ref{fig:snap3}.

\setlength{\unitlength}{.9ex}
\begin{figure}[h]
  \centering
  \begin{picture}(82,20)

   \put(-12,0){
     \put(20,14){\line(0,1){3}}
     {\linethickness{1.75pt}
     \put(20,3){\line(0,1){11}}}
     \put(20,14){\line(1,-2){7}}
     \put(20,14){\line(1,-4){3.5}}
     \put(20,14){\line(-1,-2){7}}
     \put(20,14){\line(-1,-4){3.5}}

     \put(20,14){\line(-1,1){3}}
     \put(20,14){\line(1,1){3}}

     \put(17,19){$B_t(w)$}
     \put(19,0){$S$}
   }

   \put(13,10){\vector(1,0){10}}
   \put(15,11){Isom}
   \put(7,0){
     \put(20,14){\line(0,1){3}}
     {\linethickness{1.75pt}\put(20,3){\line(0,1){11}}}
     \put(18,1){\line(1,4){2}}
     \put(16,1){\line(1,2){4}}
     \put(22,1){\line(-1,4){2}}
     \put(24,1){\line(-1,2){4}}

     \put(20,9){\line(-1,3){2.66}}
     \put(20,9){\line(1,3){2.66}}
     \put(20,9){\makebox(0,0){$\bullet$}}

     \put(14,19){$B(\frac{\varepsilon}{\lambda^k},w)$}
     \put(17,8.5){$w$}
   }

   \put(32,11){\vector(4,1){10}}
   \put(32,9){\vector(4,-1){10}}
   \put(37,9.5){$f$}
   \put(26,0){
     \put(20,14){\line(0,1){3}}
     {\linethickness{1.75pt}\put(20,1){\line(0,1){15}}}
     \put(18,1){\line(1,4){2}}
     \put(22,1){\line(-1,4){2}}

     \put(20,9){\makebox(0,0){$\bullet$}}

     \put(12,19){$B(\frac{\varepsilon}{\lambda^{k-1}},f(w))$}
     \put(21,7){$f(w)$}
   }

   \put(53,10){\vector(1,0){10}}
   \put(55,11){Isom}

   \put(48,0){
     \put(20,1){\line(0,1){18}}
     {\linethickness{1.75pt}\put(20,1){\line(0,1){15}}}
     \put(20,16.5){\line(1,-4){3.5}}
     \put(20,16.5){\line(-1,-4){3.5}}
     \put(15,19){$B_t(f(w))$}
     \put(19,-2){$S_1$}
   }

  \end{picture}
  \caption{The definition of $f_t$ on $B_t(w)$.}
  \label{fig:snap3}
\end{figure}

It is clear that $(T,f)$ is homeomorphic to $(T_t,f_t)$, therefore $f_t$ still is an optimal
train track representing $\Phi$.

We remark that when we move $v$ by $t$, then $f(v)$ moved toward its nearest vertex $u$ by
$\lambda t$. On the other hand, even if $u$ has been moved, that was by an amount of
$t/\lambda^{k}$ for some
$k\geq 0$. Therefore $f(v)$ approaches $u$ at speed at least $\lambda -1>0$.

We can finally run the induction on the number of orbits of vertices whose image is not a
vertex. Let $v$ be such a vertex.
We move $v$ as described as long as we can.
Since $d(f(v),VT)$ is strictly decreasing, the process must stop. The process stops when we
cannot chose $\varepsilon>0$ with the required properties. That is to say, when either
$f(v)$ is a vertex or some moved vertex collided with another vertex $v'$.
In both cases we decreased by one our induction parameter.
\qed

The following is a direct corollary of the existence of train track maps for free products. It
was proved in~\cite{BestvinaHandel} for free groups and in~\cite{CollinsTurner} for free products.

\begin{Cor}[\cite{BestvinaHandel},\cite{CollinsTurner}]
Let $G$ be a group acting co-compactly on a tree with trivial edge stabilisers and freely indecomposable vertex stabilisers.
Then any $\Phi \in Out(G)$ has a representative which is a relative train track map. In particular, relative train tracks for free groups exist.
\end{Cor}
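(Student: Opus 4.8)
We argue by induction on the Kurosh rank of $G$, that is on the integer $p+k$ where $G=G_1*\dots*G_p*F_k$ is a Grushko decomposition with each $G_i$ freely indecomposable and not infinite cyclic (such a decomposition exists by the hypothesis, by refining the given co-compact action). By the Kurosh subgroup theorem $\Out(G)=\Out(G,\mathcal O)$ for the associated outer space $\mathcal O=\mathcal O(G,(G_i)_{i=1}^p,F_k)$, so $\Phi$ acts on $\mathcal O$. If $G$ is freely indecomposable there is nothing to prove; otherwise the Grushko free factor system $\mathcal G_0=\{[G_1],\dots,[G_p]\}$ is a proper $\Phi$-invariant free factor system (Grushko decompositions being essentially unique, $\Phi$ permutes its factors), so we may choose a \emph{maximal} proper $\Phi$-invariant free factor system $\mathcal F$ with $\mathcal G_0\sqsubseteq\mathcal F\sqsubset\{[G]\}$, such a maximal one existing because $\sqsubseteq$ admits only chains of bounded length.

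By the Lemma identifying algebraic and metric irreducibility, $\Phi$ is $\mathcal O(G,\mathcal F)$-irreducible, hence hyperbolic by the theorem that irreducible automorphisms are hyperbolic; by the Simplicial Train Track theorem there exist $T\in\mathcal O(G,\mathcal F)$ and a simplicial optimal train track map $f\colon T\to T$ representing $\Phi$, which is a train track map for $\simfk$. Let $\Gamma=\quot GT$ be the corresponding finite graph of groups. Its vertex groups are the components $H_1,\dots,H_q$ of $\mathcal F$, which $\Phi$ permutes, and $f$ descends to a simplicial graph map of $\Gamma$ whose train track structure makes the edges of $\Gamma$ the top, exponentially growing part of the filtration we are building.

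Each $H_j$ is a \emph{proper} free factor of $G$ (since $\mathcal F\sqsubset\{[G]\}$ there is a nontrivial free complement), hence has strictly smaller Kurosh rank, and being a free product of conjugates of some $G_i$'s with a free group it again satisfies the hypothesis of the statement. Let $n_j$ be the length of the $\Phi$-orbit of $[H_j]$; by the inductive hypothesis $\Phi^{n_j}|_{H_j}\in\Out(H_j)$ is represented by a relative train track map on a graph of groups $\Gamma_j$ with filtration $\emptyset=Z_0^j\subset\dots\subset Z_{m_j}^j=\Gamma_j$. Blow up each vertex of $\Gamma$ carrying $H_j$ to the graph $\Gamma_j$, attaching the half-edges of $\Gamma$ at that vertex to points of $\Gamma_j$ (this uses trivial edge stabilizers and can be done compatibly with the $\Phi$-permutation of vertices), extend $f$ over the blown-up graph $\widehat\Gamma$ by the maps $\Gamma_j\to\Gamma_{\Phi(j)}$ on the blown-up vertices, and filter $\widehat\Gamma$ by placing, below the strata of $f$ on $\Gamma$, the (permuted copies of the) filtrations of the $\Gamma_j$'s. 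The resulting map $\widehat f$ represents $\Phi$ and has an invariant filtration whose top strata are exponentially growing and train track and whose lower strata carry relative train track structure by induction.

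The main obstacle is the last verification: that $\widehat f$ and this filtration satisfy the defining conditions of a relative train track map in the sense of~\cite{BestvinaHandel}, i.e.\ that $\widehat f$ does not fold the connecting paths through the blown-up vertices into lower strata and that images of top-stratum edges remain legal relative to the filtration. One deduces this from optimality of $f$ (there is a legal turn in $\Gamma$ at every vertex, so iterated $f$-images of edges of $\Gamma$ take only legal turns there) together with the relative train track property of the maps $\Gamma_j\to\Gamma_{\Phi(j)}$; the secondary bookkeeping point, the permutation of strata by $\Phi$, is handled exactly as in the classical setting, by stating the conditions for the return maps of $\Phi$-orbits of strata (or by passing to powers on vertex groups). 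The free-group case is the special case $p=0$, where all vertex groups are trivial and no blow-up is needed, so the induction on free factor systems together with the Simplicial Train Track theorem already recovers the theorem of Bestvina and Handel.
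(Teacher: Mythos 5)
Your proof takes a genuinely different, and heavier, route than the paper's. The paper does not perform an induction on Kurosh rank and does not blow up any vertices: it observes that $\kappa$ and $\overline\kappa$ are finite and non-decreasing (strictly increasing in one of the two) along $\sqsubset$, so a maximal proper $\Phi$-invariant free factor system $\mathcal F$ exists, and then simply asserts that a simplicial train track map for $\Phi$ on some tree in $\mathcal O(G,\mathcal F)$ \emph{is} a relative train track map in the Collins--Turner sense. In that free-product notion the vertex groups are treated as abstract ``black boxes'' attached at the non-free vertices; no filtration inside them is required, so there is nothing to iterate. Your proof, by contrast, aims directly at the Bestvina--Handel notion (a filtered graph map with train track conditions at each exponential stratum), and that forces the whole inductive blow-up construction. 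The two approaches buy different things: the paper's proof is short and delegates the definition of ``relative train track map'' to Collins--Turner, leaving the passage to a genuine filtration implicit; yours would yield the full Bestvina--Handel filtration explicitly but requires the extra work you sketch.

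The substantive issue is the step you yourself label ``the main obstacle.'' Verifying that the blown-up map $\widehat f$ satisfies the Bestvina--Handel RTT conditions across the attachment points is not a formality, and it does not follow from optimality of $f$ alone. In the classical construction, after blowing up and assembling the filtration one generally still needs to run the tightening/subdivision/folding operations from \cite{BestvinaHandel} to guarantee that the derivative map at points of a higher stratum sends directions in that stratum back into it, that connecting paths through a lower stratum do not get cancelled under iteration, and that legal turns in the top stratum are preserved across the blow-up. Asserting that this ``is handled exactly as in the classical setting'' is precisely where the hard content of the classical setting lives, so as written the proposal is not a complete proof of the Bestvina--Handel statement, although the scaffolding is sound. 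For the result as stated in the paper (existence in the Collins--Turner sense), the induction and blow-up are unnecessary: once you have the maximal proper $\Phi$-invariant free factor system $\mathcal F$ and a simplicial train track map relative to $\mathcal F$, you are already done. A minor point: your appeal to ``$\sqsubseteq$ admits only chains of bounded length'' needs justification, and the Kurosh rank / reduced Kurosh rank argument is exactly what supplies it; you should either reproduce or cite that computation.
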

\proof We can write $G=G_1* \ldots *G_p*F_k$, where the $G_i$ are freely indecomposable and non-free.
By the Kurosh subgroup theorem, any subgroup of $G$ can be written as a free product
of conjugates of subgroups of the $G_i$ and some free group. So if $H \leq G$, then $H \cong A_1* \ldots A_m * F_l$, for some $A_i \neq 1$ which are conjugates
of subgroups of the $G_i$ and some free group $F_l$ of rank $l$. Define the Kurosh rank of such a subgroup $H$ to be $m+l$, denoted $\kappa(H)$.
Note this number may be infinite in general, but will certainly be finite if $H$ is a free factor (and in many other cases).

Now define the reduced Kurosh rank of $H$ to be $\overline{\kappa}(H)=\max(0, \kappa(H)-1)$.
The Kurosh rank of a free factor system, $\mathcal{G}= \{ [G_i] \}$  is then defined to be $\kappa({\mathcal G})= \sum {\kappa(G_i) }$ and the reduced Kurosh rank of $\mathcal{G}$ is defined
to be $\overline{\kappa}({\mathcal G}) = \sum \overline{\kappa}(G_i) $.

These are finite numbers, and if $\mathcal{G} \sqsubseteq \mathcal{H}$ then $\overline{\kappa}({\mathcal G})  \leq \overline{\kappa}({\mathcal H})$ and $\kappa({\mathcal G})  \leq \kappa({\mathcal H})$.
Moreover, if $\mathcal{G} \sqsubset \mathcal{H}$, then either $\overline{\kappa}({\mathcal G})  < \overline{\kappa}({\mathcal G})$ or $\kappa({\mathcal G})  <  \kappa({\mathcal H})$.

Hence given $\Phi \in Out(G)$ there is a maximal $\Phi$-invariant, proper free factor system, with corresponding space of trees $\mathcal{O}$.
A simplicial train track map representing $\Phi$ for some tree in $\mathcal{O}$ is a relative train track map in the sense of~\cite{CollinsTurner}. \qed


\section{Computing stretching factors}
This section is devoted to prove that stretching factors are realized by a class of particularly
simple elements. We generalize the line used in~\cite{FrancavigliaMartino}, taking in account
possible pathologies coming from the presence of non-free vertices. We remark that even if this
section is at the end of the paper, the results of this section
are independent from those in Section~\ref{sectionTT} (where
Theorem~\ref{sausages} is used).

\begin{Not}
Let $x,y,z,t$ be vertices of a $G$-tree, not necessarily different from each other. We write
$$\nopod{y}{x}{z}\Leftrightarrow x\in[y,z] \qquad\text{ and }\qquad
\tripod{y}{x}{z}\Leftrightarrow x\notin[y,z].$$
Note that \tripod{$y$}{$x$}{$z$}, if and only if the segments $[y,x]$ and $[x,z]$ intersect in a
sub-segment starting at $x$ which is not a single point. We will use the following two
inference rules, whose verification is immediate.
$$\tripod{$y$}{$x$}{$z$} + \tripod{$z$}{$x$}{$t$} = \tripod{$y$}{$x$}{$t$}$$
$$\tripod{$y$}{$x$}{$z$} + \nopod{$z$}{$x$}{$t$} = \nopod{$y$}{$x$}{$t$}$$

\end{Not}

We will now be concerned in finding {\em good} elements $g$ with a tight axis. In the
subsequent discussion when we say ``$g$ does not have a tight axis'' we mean either that the
axis of $g$ is not tight or that $g$ is elliptic.

\begin{Lem}\label{l:notight}
Let $A,B\in \mathcal O$ and $f:A\to B$ be a PL-map. Let $L\subset A$ be a tight sub-tree
isomorphic to $\mathbb R$. Suppose there is $g\in G$ and $x\in L$ so that $x\neq gx$ and
$[x,gx]\subset L$. Set $y=f(x)$. If $g$ does not have tight axis, then we have $$\tripod{$g^{-1}y$}{$y$}{$gy$}.$$
\end{Lem}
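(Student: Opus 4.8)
The plan is to argue by contraposition: I will assume $y\in[g^{-1}y,gy]$ — the negation of the desired conclusion, which in the notation above asserts exactly $y\notin[g^{-1}y,gy]$ — and deduce that $g$ has a tight axis, against the hypothesis. The starting observation is that tightness of $L$ is inherited by $f$ along $[x,gx]$ and all its translates. Indeed, since $[x,gx]\subset L\subset A_{\max}$ and $f|_L$ is injective, $f|_{[x,gx]}$ is a continuous injection of an arc into the tree $B$, so its image is the geodesic joining $f(x)=y$ and $f(gx)=gy$ (using $G$-equivariance of $f$); thus $f([x,gx])=[y,gy]$ and $f$ is injective on $[x,gx]$. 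As $A_{\max}$ is $G$-invariant and $f$ is equivariant, the same holds for each translate: $f$ is injective on $g^n[x,gx]$ with $f(g^n[x,gx])=g^n[y,gy]$ for every $n\in\Z$, and in particular $y\neq gy$.

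Next I would check that the turn at $x$ is nondegenerate in $A$, i.e.\ $x\in[g^{-1}x,gx]$. If not, the geodesics $[x,gx]$ and $[x,g^{-1}x]=g^{-1}[x,gx]$ share a nondegenerate initial segment $[x,m]\subset L$ (where $m$ is the median of $x,g^{-1}x,gx$ and $m\neq x$). Applying $f$, the segment $f([x,m])=[y,f(m)]$ is nondegenerate (because $m\neq x$ and $f|_L$ is injective) and is contained in both $f([x,gx])=[y,gy]$ and $f([x,g^{-1}x])=[y,g^{-1}y]$; hence $[y,gy]\cap[y,g^{-1}y]$ is nondegenerate, i.e.\ $y\notin[g^{-1}y,gy]$, contradicting the assumption. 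So $x\in[g^{-1}x,gx]$.

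Now the axis is produced by iteration. Applying powers of $g$ to $x\in[g^{-1}x,gx]$ yields $g^nx\in[g^{n-1}x,g^{n+1}x]$ for all $n$, so $\alpha:=\bigcup_{n\in\Z}g^n[x,gx]$ is locally geodesic, hence a bi-infinite geodesic, which $g$ maps to itself translating by $d_A(x,gx)>0$; therefore $g$ is hyperbolic with $\axis_A(g)=\alpha$, and $\alpha\subset A_{\max}$ by $G$-invariance. Running the identical argument in $B$ starting from $y\in[g^{-1}y,gy]$, the set $\beta:=\bigcup_n g^n[y,gy]$ is a bi-infinite geodesic translated by $g$, so $\axis_B(g)=\beta$; moreover $\beta=\bigcup_n g^nf([x,gx])=f(\alpha)$. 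Finally $f$ is injective on $\alpha$: if two points of $\alpha$ lie in $g^i[x,gx]$ and $g^j[x,gx]$ with $i\le j$, their $f$-images lie on the geodesic line $\beta$, inside $g^i[y,gy]$ and $g^j[y,gy]$ respectively; these pieces are disjoint when $j>i+1$ and meet only in one common endpoint when $j=i+1$, so equality of images forces $j\in\{i,i+1\}$, and then injectivity of $f$ on each piece forces the two points to coincide. Hence $\axis_A(g)=\alpha$ lies in $A_{\max}$ with $f|_\alpha$ injective, i.e.\ $g$ has a tight axis — the sought contradiction. Therefore $y\notin[g^{-1}y,gy]$, as claimed.

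The one genuinely delicate point is the last step: confirming that $\alpha$ really is $\axis_A(g)$ — via ``locally geodesic implies geodesic'' in a tree together with uniqueness of the line translated by $g$ — and that $f$ remains injective along the entire line $\alpha$ once $f(\alpha)$ has been identified as a geodesic line. Everything before that is routine bookkeeping with $G$-equivariance and medians, and the dichotomy ``either $y\notin[g^{-1}y,gy]$ or $g$ has a tight axis'' in fact holds for any PL-map and any tight line, independently of the hypothesis on $g$.
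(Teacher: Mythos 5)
Your proof is correct, and it is the contrapositive of the paper's argument rather than a verbatim translation of it. The paper argues directly, splitting into the two cases built into the phrase ``$g$ does not have a tight axis'': if $g$ is elliptic, the fixed point is the midpoint of $[x,gx]$, so the tripod already exists at $x$ and is pushed forward by the injectivity of $f$ on $[x,gx]$ and its $g$-translate; if $g$ is hyperbolic with non-tight axis, the authors note $\axis_A(g)\subset\bigcup_k g^k[x,gx]\subset A_{\max}$, so the failure of injectivity of $f$ on the axis must be located at a junction $g^kx$, which by equivariance is the tripod at $y$. Your contrapositive avoids this case distinction entirely: from the assumed degeneracy $y\in[g^{-1}y,gy]$ you first recover the corresponding degeneracy $x\in[g^{-1}x,gx]$ (which in particular forces $g$ to be hyperbolic and $x$ to lie on its axis — the elliptic case and the case $x\notin\axis_A(g)$ are absorbed here, since both produce a tripod at $x$ that pushes forward), and then you assemble the axis in $A$ and $B$ from the translates $g^n[x,gx]$ and $g^n[y,gy]$ and verify injectivity piece by piece. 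This buys you a somewhat tighter exposition: you don't need to separately discuss the possibility that $x$ is off the axis (which the paper's last sentence quietly elides), and, as you observe at the end, the dichotomy you prove is slightly cleaner to state. The underlying geometry — $f$ injective on each translate of $[x,gx]$, so any fold must occur at the junctions — is of course the same in both proofs.
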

\begin{proof}
Suppose $g$ has a fixed point $v$. Since $A$ is a tree, $v$ is the middle point of $[x,gx]$,
and in particular we have \tripod{$g^{-1}x$}{$x$}{$gx$} (where the center of the tripod is
exactly $v$). The claim follows.

If $g$ has no fixed point, then it has an axis. The concatenation of the segments
$[g^kx,g^{k+1}x]$, as $k$ varies in $\mathbb Z$, is a $g$-invariant tree. Therefore it contains the axis of $g$.
Thus, $\axis_A(g)$ is contained in the $g$-orbit of
$[x,gx]$. In particular $\axis_A(g)\subset A_{\max}$ because $[x,gx]\subset L \subset
A_{\max}$ which is a $G$-invariant sub-set of $A$.
By hypothesis  the axis of $g$ is not tight, thus $f|_{\axis_A(g)}$ is not injective. Since $L$
is tight, then $f|_L$ is injective. Therefore $f$ must overlap an initial segment of $[x,gx]$
with a terminal segment of $[g^{-1}x,x]$ which is exactly the claimed formula.
\end{proof}

\begin{Lem}[No triple points]~\label{no_triple_points}
Let $A,B\in\mathcal O$, $f\colon A\to B$ be a PL-map, and $g\in G$ be such that
$\axis_A(g)$ is tight.
If there exists $x\in \axis_A(g)$ such that $|Gx\cap [x,gx]|\geq 4$ then, there exists $h\in G$
with tight axis, such that $l_A(h)<l_A(g)$.
\end{Lem}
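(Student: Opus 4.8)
The plan is to exhibit the required $h$ among a few explicit elements built from the orbit points lying on $\axis_A(g)$, using Lemma~\ref{l:notight} to rule out the degenerate possibilities. Write $L=\axis_A(g)$, which is tight and isometric to $\mathbb R$ by hypothesis, and let $x=x_0<x_1<\dots<x_k=gx$ be the points of $Gx$ contained in $[x,gx]\subset L$, listed in the order in which they occur along $L$; the assumption $|Gx\cap[x,gx]|\ge4$ says precisely that $k\ge3$. Choose $s_i\in G$ with $s_ix_0=x_i$, normalised so that $s_0$ is the identity and $s_k=g$, and put $y=f(x_0)$, $y_i=f(x_i)$; by equivariance $y_i=s_iy$. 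Since $L$ is tight we have that $f|_L$ is injective and $f(L)=\axis_B(g)=:M$ (see the discussion following Theorem~\ref{thm_optimal_map}); being also continuous with $L,M$ lines, $f|_L$ is monotone, so after orienting $M$ suitably we have $y_0<y_1<\dots<y_k$ along $M$.

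For $i\in\{0,1,2\}$ — which requires only $k\ge3$ — set $h_i:=s_{i+1}s_i^{-1}$. Then $h_ix_i=x_{i+1}$, whence $h_iy_i=y_{i+1}$ by equivariance, and
$$l_A(h_i)\ \le\ d_A(x_i,h_ix_i)\ =\ d_A(x_i,x_{i+1})\ <\ d_A(x_0,x_k)\ =\ l_A(g),$$
the strict inequality holding because $[x_i,x_{i+1}]$ is a proper sub-segment of $[x_0,x_k]$. Hence if any one of $h_0,h_1,h_2$ has a tight axis, we take $h$ to be that element and are done.

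So assume that none of $h_0,h_1,h_2$ has a tight axis. Applying Lemma~\ref{l:notight} to each $h_i$ with the tight line $L$ and the base point $x_i$ — valid because $x_i\ne h_ix_i$ and $[x_i,h_ix_i]=[x_i,x_{i+1}]\subset L$ — and using $f(x_i)=y_i$, $h_iy_i=y_{i+1}$, we obtain
$$\tripod{$h_i^{-1}y_i$}{$y_i$}{$y_{i+1}$}\qquad\text{for }i=0,1,2.$$
The remaining step, which is the only delicate one, is to derive a contradiction from these three relations together with the fact that $y_0,y_1,y_2,y_3$ lie in this order on the line $M$. The idea is to apply the isometries $h_i$ (and, if necessary, $g$) to the tripod relations above so as to bring them to a common centre, and then to iterate the two inference rules displayed above; keeping track of the auxiliary points $h_i^{\pm1}y_i$ along the way should force a configuration incompatible with the collinearity of three consecutive $y_j$'s on $M$. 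The hard part will be precisely this tripod bookkeeping — deciding which isometries to apply and in which order — while the rest of the argument is routine; this is also the only place where having four, rather than three, orbit points is needed, namely in order to have the third candidate $h_2$ and hence a third relation at one's disposal.
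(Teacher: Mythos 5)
Your setup is correct and matches the paper: you reduce to four consecutive orbit points $x_0<x_1<x_2<x_3=gx_0$ on the tight axis, form the single‑step elements $h_0,h_1,h_2$ (these are the paper's $a$, $ba^{-1}$, $gb^{-1}$), observe $l_A(h_i)<l_A(g)$, and, assuming none has a tight axis, extract the three tripod relations $\tripod{$h_i^{-1}y_i$}{$y_i$}{$y_{i+1}$}$ from Lemma~\ref{l:notight}. Up to that point the argument is sound.

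The gap is the contradiction itself, and it is not just bookkeeping left undone. You write that the three single‑step tripod relations plus collinearity of $y_0,\dots,y_3$ should ``force a configuration incompatible with collinearity,'' but these three relations live at three different centres $y_0,y_1,y_2$, each asserting that a branch leaving the axis at $y_i$ initially overlaps the forward direction along $M$; there is no evident conflict between them, and no amount of applying the inference rules to these three statements alone appears to close the argument. The paper's proof needs a fourth candidate that you do not consider: the \emph{double‑step} element $b=h_1h_0$ (which sends $x_0$ to $x_2$). It also satisfies $l_A(b)<l_A(g)$, so it is a legitimate candidate for $h$; and if it too fails to have a tight axis, Lemma~\ref{l:notight} yields $\tripod{$b^{-1}y_0$}{$y_0$}{$by_0$}$. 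Combined with the collinearity $y_1\in(y_0,y_2)$ this produces a tripod relation at $y_0$ involving $y_1$ and $b^{-1}y_1$ (the paper's $\chi$), which is centred at the \emph{same} point $y_0$ as $\alpha$ and $a^{-1}\beta$ and points ``backwards''; chaining $\alpha$, $\chi$ and $a^{-1}\beta$ via the inference rules then contradicts the nopod relation $\nopod{$a^{-1}y_0$}{$y_0$}{$a^{-1}y_2$}$ that collinearity forces. Without the double‑step candidate there is nothing to chain against, so your plan as stated would not terminate. (Incidentally, the tripod relation coming from $h_2$ is never used in the paper's derivation; the essential candidates are $h_0$, $h_1$, and $h_1h_0$.)
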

\begin{proof}
By hypothesis, there exists $x\in A$ and $a,b\in G$ such that $\axis_A(g)$ locally looks as
depicted in Figure~\ref{fig:vvgsegment}.
\begin{figure}[h]
\centerline{
\xymatrix{
\dots \stackrel{{x}}{\bullet}\ar@{-}[r] &\stackrel{ax}{\bullet} \ar@{-}[r] & \stackrel{bx}{\bullet}\ar@{-}[r] & \stackrel{gx}{\bullet} \dots
}}
\caption{The segment $[x,gx]$}\label{fig:vvgsegment}
\end{figure}

In other words, the segment $[x,gx]$ is the concatenation of segments $[x,ax],$ $[ax,bx]$ and
$[bx,gx]$. Let $y=f(x)$. Since $\axis_A(g)$ is tight, we have a similar situation in $B$. See~Figure~\ref{fig:uugsegment}.
\begin{figure}[h]
\centerline{
\xymatrix{
\dots \stackrel{{y}}{\bullet}\ar@{-}[r] &\stackrel{ay}{\bullet} \ar@{-}[r] & \stackrel{by}{\bullet}\ar@{-}[r] & \stackrel{gy}{\bullet} \dots
}}\caption{The segment $[y,gy]$}\label{fig:uugsegment}
\end{figure}
In particular we have
$$\delta:=\nopod{$y$}{$ay$}{$by$}\quad \qquad
b^{-1}\delta:=\nopod{$b^{-1}y$}{$b^{-1}ay$}{$y$}
\qquad \text{ and }\quad
a^{-1}\delta:=\nopod{$a^{-1}y$}{$y$}{$a^{-1}by$}
$$

We look at $a, ba^{-1}, gb^{-1}$ (corresponding to single steps in Figure~\ref{fig:uugsegment}).
Clearly, all of them have $A$-length strictly smaller than that of $g$. Thus, if one of them has tight
axis we are done. We can therefore suppose that none of them has tight axis. In particular, by
Lemma~\ref{l:notight} we know
$$\alpha:=\tripod{$a^{-1}y$}{$y$}{$ay$}
\qquad
\beta:=\tripod{$ab^{-1}ay$}{$ay$}{$by$}
.$$
Now we look at $b$ (corresponding to a double step in Figure~\ref{fig:uugsegment}). As above we
have $l_A(b)<l_A(g)$. We argue by contradiction assuming that $b$ has no tight axis.
Then by Lemma~\ref{l:notight} we have \tripod{$by$}{$y$}{$b^{-1}y$}. Moreover, by assumption we
have $ay\in(y,by)$ whence $b^{-1}ay\in(b^{-1}y,y)$; from which we get

$$\chi:=\tripod{$ay$}{$y$}{$b^{-1}ay$}.$$

 It follows that
$$\alpha+\chi+(a^{-1}\beta)=
\tripod{$a^{-1}y$}{$y$}{$ay$}
+\tripod{$ay$}{$y$}{$b^{-1}ay$}
+\tripod{$b^{-1}ay$}{$y$}{$a^{-1}by$}
=
\tripod{$a^{-1}y$}{$y$}{$a^{-1}by$}$$
 which contradicts $a^{-1}\delta=\nopod{$a^{-1}y$}{$y$}{$a^{-1}by$}$.

\end{proof}

\begin{Def}
When the hypothesis of Lemma~\ref{no_triple_points} are satisfied we say that $\axis_A(g)$ has
triple points.
\end{Def}

\begin{Lem}[Four points lemma]\label{l:4pt}
Let $A,B\in\mathcal O$ and $f\colon A\to B$ be a PL-map. Let $L\subset A$ be a tight sub-tree
isomorphic to $\mathbb R$. Suppose there is $x\neq v\in L$ and $a,b\in G$ such that
$ax\neq bv$ and $ax,v\in[x,bv]\subset L$ (see Figure~\ref{fig:xtab}). Let $y=f(x)$, $w=f(v)$. If \tripod{$a^{-1}y$}{$y$}{$ay$} and
\tripod{$b^{-1}w$}{$w$}{$bw$},
then $b^{-1}a$ has a tight axis, given by the iterates of $[x,b^{-1}ax]$.
\end{Lem}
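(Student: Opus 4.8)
The plan is to show that the half-line obtained by concatenating the iterates of the segment $[x, b^{-1}ax]$ under powers of $h := b^{-1}a$ is an embedded line, legal in the sense that $f$ is injective on it; once we know this, the segments $[h^n x, h^{n+1}x]$ piece together to a $G$-invariant tree containing the axis of $h$, so this line is $\axis_A(h)$ and it is tight. First I would set $z = bv \in [x, bv] \subset L$, so that the hypothesis reads $ax, v \in [x, z]$ with $ax \neq z$; thus along $L$ we have the configuration $x < ax \le v < z$ (up to relabelling the order of $ax$ and $v$, which I will have to check does not matter, using $ax \neq bv$). Applying $b^{-1}$ to the relation $v \in [x, bv]$ gives $b^{-1}v \in [b^{-1}x, v]$, and I would combine this with $ax \in [x, v]$ to locate the four points $b^{-1}x$, $x$, $ax$, $b^{-1}ax$ along $L$ and, after applying $f$, along $f(L)$ (which is isometric to $L$ since $L$ is tight).

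The heart of the argument is the same tripod-calculus computation used in Lemma~\ref{no_triple_points}. From \tripod{$a^{-1}y$}{$y$}{$ay$} I would extract, by applying $a$ and $b^{-1}$ as needed, the statement that at the point $f(ax)$ the two germs pointing toward $f(x)$ and toward $f(a^2 x)$ (or the relevant translate) are overlapped by $f$; similarly \tripod{$b^{-1}w$}{$w$}{$bw$} gives the overlap at $f(v) = w$ of the germs toward $f(b^{-1}w)$ and $f(bw)$. Translating the first relation by $b^{-1}$ and the second appropriately, both become statements about germs at points of $f([x, b^{-1}ax])$ and its $h$-translates, and the two inference rules
$$\tripod{$p$}{$q$}{$r$} + \tripod{$r$}{$q$}{$s$} = \tripod{$p$}{$q$}{$s$} \qquad
\tripod{$p$}{$q$}{$r$} + \nopod{$r$}{$q$}{$s$} = \nopod{$p$}{$q$}{$s$}$$
should let me propagate legality across the junction between one copy of $[x, b^{-1}ax]$ and the next. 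Concretely, I expect to show \nopod{$h^{-1}y'$}{$y'$}{$hy'$} for $y' = f(x)$, i.e. that $f(x)$ lies \emph{between} $f(h^{-1}x)$ and $f(hx)$ with no backtracking, which is exactly the condition that the broken line $f([h^{-1}x, x]) \cup f([x, hx])$ is embedded at the junction point; iterating by equivariance gives injectivity of $f$ on the whole half-line.

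The main obstacle I anticipate is bookkeeping: making sure the group elements conjugating the two tripod hypotheses into a common frame are correct, and checking that the axis of $h = b^{-1}a$ really does run through $[x, b^{-1}ax]$ in the predicted direction (equivalently, that $h$ is hyperbolic and not elliptic — this follows because $f(\axis)$ cannot be a bounded set once we have the no-backtracking relation, since $f$ is injective on the half-line and $G$-trees in $\mathcal O$ are unbounded, or more directly because $x$ and $hx = b^{-1}ax$ are distinct and the concatenation of $[h^n x, h^{n+1}x]$ is a line). I would also need to confirm that tightness of $L$ is used in the right place, namely to guarantee $f|_{[b^{-1}x, b^{-1}ax]}$ is an isometry so that the tripod relations at $f$-images faithfully reflect the combinatorics; this is where the hypothesis $L \cong \mathbb{R}$ and $L \subset A_{\max}$ with $f|_L$ injective enters. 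Once the no-backtracking relation at the junction is established, the conclusion that $[x, b^{-1}ax]$ and its iterates form the tight axis of $b^{-1}a$ is immediate from the remark following Theorem~\ref{thm_optimal_map} together with the observation preceding it that a tight axis maps to an axis.
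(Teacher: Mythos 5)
Your overall strategy is the right one and matches the paper's: decompose the candidate fundamental domain $[x,b^{-1}ax]$, argue that $f$ is injective on each piece, and then use the tripod calculus to rule out backtracking so that the iterates under $h=b^{-1}a$ assemble into a tight line. However, there is a genuine gap in what you propose to prove. You single out only the junction at $x$, i.e.\ the relation \nopod{$h^{-1}y$}{$y$}{$hy$} (the paper's relation $II$), and claim that establishing this and iterating by equivariance suffices. But injectivity of $f$ on the closed segment $[x,b^{-1}ax]$ is itself nontrivial: this segment sits inside $[x,v]\cup[v,b^{-1}ax]$, with the first piece in $L$ and the second equal to $b^{-1}([bv,ax])\subset b^{-1}L$. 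Those two lines are tight separately, but you must also rule out backtracking at the \emph{interior} seam $v$ — this is exactly the paper's relation $I := \nopod{$y$}{$w$}{$b^{-1}ay$}$, which requires its own tripod computation (using $ax\neq bv$ and tightness of $L$ to derive a contradiction from the negation). Without $I$, the map $f$ could fold at $v$ and the axis of $h$ would fail to be tight even if $II$ holds.

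A second, related slip: you speak of locating $b^{-1}x$ and $b^{-1}ax$ ``along $L$ and, after applying $f$, along $f(L)$'' and of $f|_{[b^{-1}x,b^{-1}ax]}$ being an isometry. But $b^{-1}x$ and $b^{-1}ax$ generally do \emph{not} lie in $L$; they lie in the translate $b^{-1}L$. It is true that $f$ is injective on $b^{-1}L$ by equivariance, but the two tight lines $L$ and $b^{-1}L$ overlap only along some segment, and the whole point of the lemma is to control the combinatorics of that overlap. The paper handles this by working entirely with the two explicit pieces $[x,v]\subset L$ and $[v,b^{-1}ax]=b^{-1}([bv,ax])\subset b^{-1}L$ and proving $I$ and $II$ in sequence (with $II$ using $I$ as an input). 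You would need to do the same; a single propagation step at $x$ is not enough.
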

\begin{figure}[h]
\centerline{
\xymatrix{
\ar@{-}[r]&  \stackrel{x}{\bullet}\ar@{-}[r] &\stackrel{v}{\bullet} \ar@{-}[r] &
\stackrel{ax}{\bullet}\ar@{-}[r] & \stackrel{bv}{\bullet} \ar@{-}[r] &
}
\qquad
\xymatrix{
\ar@{-}[r]& \stackrel{{x}}{\bullet}\ar@{-}[r] &\stackrel{ax}{\bullet} \ar@{-}[r] &
\stackrel{v}{\bullet}\ar@{-}[r] & \stackrel{bv}{\bullet} \ar@{-}[r] &
}
}\caption{The two possibilities for the segment $[x,bv]$}\label{fig:xtab}
\end{figure}
\begin{proof}
First of all, note that the tripodal hypotheses imply $x\neq ax$ and $v\neq bv$ (and thus $x\neq bv$).
We have to show that the tree formed by the iterates of the segment
$[x,b^{-1}ax]$
is a  tight line. Clearly it is contained in
$A_{\max}$ because $L$ is, and $[x,b^{-1}ax]$ is contained in the union of the two
segments $[x,v]$ and $[v,b^{-1}ax]$ (the latter may be a single point). Note that a priori we may have
\tripod{$x$}{$v$}{$b^{-1}ax$}. Since $[v,b^{-1}ax]=b^{-1}([bv,ax])$ and $[ax,bv]\subset L$,
we know that $f$ is injective on both $[x,v]$ and $[v,b^{-1}ax].$ Therefore,  we are left
 to prove  $$I:=\nopod{$y$}{$w$}{$b^{-1}ay$} \qquad \textrm{ and } \qquad II:= \nopod{$a^{-1}by$}{$y$}{$b^{-1}ay$}.$$

First we prove $I$. Since both $ax$ and $v$ lie in $[x,bv]$, $ax\neq bv$ gives
 $\tripod{$ax$}{$bv$}{$v$}.$ Applying $f$ we get
\tripod{$ay$}{$bw$}{$w$} whence, acting with $b^{-1}$,
\tripod{$b^{-1}ay$}{$w$}{$b^{-1}w$}. By hypothesis we know \tripod{$b^{-1}w$}{$w$}{$bw$}. Summing
up we get
$$\tripod{$b^{-1}ay$}{$w$}{$b^{-1}w$}+ \tripod{$b^{-1}w$}{$w$}{$bw$}=
\tripod{$b^{-1}ay$}{$w$}{$bw$}.$$ Now, if we had \tripod{$y$}{$w$}{$b^{-1}ay$} we would get
$\tripod{$y$}{$w$}{$b^{-1}ay$}+\tripod{$b^{-1}ay$}{$w$}{$bw$}=\tripod{$y$}{$w$}{$bw$}$
which is impossible because $L$ is tight. Thus \tripod{$y$}{$w$}{$b^{-1}ay$} is not true and
$I$ is true. Note that this implies that $a\neq b$.

We now prove $II$. From tightness of $L$ we get \nopod{$bw$}{$ay$}{$y$}
whence \nopod{$a^{-1}bw$}{$y$}{$a^{-1}y$} and thus
\nopod{$a^{-1}bw$}{$y$}{$a^{-1}y$}+\tripod{$a^{-1}y$}{$y$}{$ay$}=\nopod{$a^{-1}bw$}{$y$}{$ay$}.

Acting on $I$ by $a^{-1}b$ we get
\nopod{$a^{-1}by$}{$a^{-1}bw$}{$y$} that, together with \nopod{$a^{-1}bw$}{$y$}{$ay$}, implies
\nopod{$a^{-1}by$}{$y$}{$ay$} because by hypothesis $ax\neq bv$ and so $y\neq a^{-1}bw$. As above, since both $ax,v$ lie in $[x,bv]$
we have  \tripod{$ax$}{$x$}{$v$} whence
\tripod{$ay$}{$y$}{$w$}. It follows \nopod{$a^{-1}by$}{$y$}{$w$} that, together with
\nopod{$y$}{$w$}{$b^{-1}ay$} gives
$$\nopod{$a^{-1}by$}{$y$}{$b^{-1}ay$}=II.$$

\end{proof}

 \begin{Lem}[No crossing points]\label{no_crossing_points}
Let $A,B\in\mathcal O$, $f\colon A\to B$ be a PL-map, and $g\in G$ be such that
$\axis_A(g)$ is tight.
Suppose that there exists points $x,v\in \axis_A(g)$ and $a,b\in G$ such that, with respect to the linear order
of $\axis_A(g)$, we have $x<v<ax<bv<gx$ (see Figure~\ref{fig:crossings})
\begin{figure}[h]
\centerline{
\xymatrix{
\dots \stackrel{{x}}{\bullet}\ar@{-}[r] &\stackrel{v}{\bullet} \ar@{-}[r] & \stackrel{ax}{\bullet}\ar@{-}[r] & \stackrel{bv}{\bullet} \ar@{-}[r] & \stackrel{gx}{\bullet}\dots
}}
 \caption{The segment $[x,gx]_A$}\label{fig:crossings}
\end{figure}
Then, there exists $h\in G$
with tight axis, such that $l_A(h)<l_A(g)$.
 \end{Lem}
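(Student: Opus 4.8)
The plan is to run the same kind of three-way dichotomy as in Lemma~\ref{no_triple_points}, now pitting the elements $a$, $b$ and $b^{-1}a$ against $g$. Write $L=\axis_A(g)$, which is tight by hypothesis, and coordinatize $L$ as a copy of $\R$ so that $x,v,ax,bv,gx$ occupy increasing coordinates $0<s<t<u<\ell$, where $\ell=l_A(g)$. Set $y=f(x)$ and $w=f(v)$. Since $[x,ax]$, $[v,bv]$ and $[x,bv]$ all lie inside $L$, every geometric hypothesis of Lemmas~\ref{l:notight} and~\ref{l:4pt} will be at hand once the relevant tripod relations are checked.

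First I would dispatch the two easy cases. We have $l_A(a)\le d_A(x,ax)=t<\ell$ and $l_A(b)\le d_A(v,bv)=u-s<\ell$, so if either $a$ or $b$ has a tight axis we are done, taking $h=a$ or $h=b$. Assume then that neither does. Applying Lemma~\ref{l:notight} to the line $L$ with the pair $(a,x)$, and again with the pair $(b,v)$ — legitimate since $x\ne ax$, $v\ne bv$ and $[x,ax],[v,bv]\subset L$ — yields exactly the two tripod relations \tripod{$a^{-1}y$}{$y$}{$ay$} and \tripod{$b^{-1}w$}{$w$}{$bw$} that figure as hypotheses of the Four Points Lemma.

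Next I would apply Lemma~\ref{l:4pt} to $L$, the points $x\ne v$ and the elements $a,b$: indeed $ax\ne bv$, and reading off the order $x<v<ax<bv$ on $L$ we see $v,ax\in[x,bv]\subset L$, which is the first configuration of Figure~\ref{fig:xtab}. The lemma then gives that $h\coloneqq b^{-1}a$ is hyperbolic, with tight axis equal to the union of the iterates of $[x,b^{-1}ax]$. In particular $x\in\axis_A(h)$ and $h$ carries $x$ to $b^{-1}ax$, so $l_A(h)=d_A(x,b^{-1}ax)$.

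Finally I would estimate this length. As $[x,b^{-1}ax]\subseteq[x,v]\cup[v,b^{-1}ax]$ and $b$ acts on $A$ by isometries, the triangle inequality gives
$$l_A(h)=d_A(x,b^{-1}ax)\le d_A(x,v)+d_A(v,b^{-1}ax)=d_A(x,v)+d_A(bv,ax)=s+(u-t).$$
Since $s<t$ this is strictly less than $u$, and $u<\ell=l_A(g)$, so $l_A(h)<l_A(g)$, completing the proof. I expect the only point needing care to be the bookkeeping of the betweenness/tripod notation, so that Lemmas~\ref{l:notight} and~\ref{l:4pt} apply verbatim in the form stated there rather than in coordinates; the closing length inequality is immediate from the linear order on $\axis_A(g)$, and no new geometric input is required beyond those two lemmas.
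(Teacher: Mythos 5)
Your proposal is correct and follows essentially the same route as the paper: dispose of $a$ and $b$ first via Lemma~\ref{l:notight}, then invoke Lemma~\ref{l:4pt} to get a tight axis for $h=b^{-1}a$, and finally bound $l_A(h)$ by splitting $[x,b^{-1}ax]$ at $v$ and using that $b$ is an isometry — the same inequality the paper derives, merely phrased in the paper as $l_A([hx,x])=l_A([ax,bx])\le l_A([ax,bv])+l_A([x,v])$.
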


 \begin{proof} Note that we may have $a=b$.
Let $y=f(x)$ and $w=f(v)$. Since $[x,ax],$ $[v,bv]$ lie in $\axis_A(g)$, they lie in
$A_{\max}$. Also $l_A(a),l_A(b)< l_A(g)$. If one of them has tight axis, then the claim is
proved by letting $h=a$ or $h=b$. Otherwise, by Lemma~\ref{l:notight} we have
\tripod{$a^{-1}y$}{$y$}{$ay$} and \tripod{$b^{-1}w$}{$w$}{$bw$} and by Lemma~\ref{l:4pt}
the element $h=b^{-1}a$ has a tight axis (in particular, in this case $a\neq b$). On the other hand $l_A([hx,x])=l_A([ax,bx])\leq
l_A([ax,bv])+l_A([x,v])<l_A(g)$, therefore $l_A(h)<l_A(g)$.
\end{proof}

\begin{Def}
When the hypothesis of Lemma~\ref{no_crossing_points} are satisfied we say that $\axis_A(g)$
has crossing points.
\end{Def}

\begin{Lem}[No bad triangles]\label{no_bad_triangles}
Let $A,B\in\mathcal O$, $f\colon A\to B$ be a PL-map, and $g\in G$ be such that
$\axis_A(g)$ is tight. If there exists $x,v,t\in \axis_A(g)$ and $a,b,c\in G$ such that, with
respect to the linear order of $[x,gx]$, we have $x<ax<v<bv<t<ct<gx$ (See Figure~\ref{fig:abgsegment})
\begin{figure}[h]
\centerline{
\xymatrix{
\dots \stackrel{{x}}{\bullet}\ar@{-}[r]^{\lambda_1} &
\stackrel{ax}{\bullet} \ar@{-}[r]^{\lambda_2} &
\stackrel{v}{\bullet}\ar@{-}[r]^{\lambda_3} &
\stackrel{bv}{\bullet} \ar@{-}[r]^{\lambda_4} &
\stackrel{t}{\bullet}\ar@{-}[r]^{\lambda_5} &
\stackrel{ct}{\bullet} \ar@{-}[r] &
\stackrel{gx}{\bullet}\dots
}
}
\caption{The local situation in $\axis_A(g)$}\label{fig:abgsegment}
\end{figure}
 then there exists $h\in G$ with tight axis, such that
$l_A(h)<l_A(g)$.
\end{Lem}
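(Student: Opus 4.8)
The plan is to run the same strategy as in Lemmas~\ref{no_triple_points} and~\ref{no_crossing_points}: produce a shorter tight-axis element by case analysis, using the inference rules for $\nopod{}{}{}$ and $\tripod{}{}{}$ together with Lemmas~\ref{l:notight} and~\ref{l:4pt}. First I would set $y=f(x)$, $w=f(v)$, $z=f(t)$, and note that the segments $[x,ax]$, $[v,bv]$, $[t,ct]$ all lie in $\axis_A(g)\subset A_{\max}$, and that $l_A(a),l_A(b),l_A(c)$ are all $<l_A(g)$. So if any of $a,b,c$ has a tight axis we are immediately done with $h$ equal to that element. Assume therefore that none of them does; by Lemma~\ref{l:notight} this gives
\tripod{$a^{-1}y$}{$y$}{$ay$}, \tripod{$b^{-1}w$}{$w$}{$bw$}, and \tripod{$c^{-1}z$}{$z$}{$cz$}.

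Next I would apply Lemma~\ref{l:4pt} to each of the three overlapping pairs of ``arcs'' that appear along the segment. Concretely: from $x<ax<v<bv$ (so $ax,v\in[x,bv]$), together with \tripod{$a^{-1}y$}{$y$}{$ay$} and \tripod{$b^{-1}w$}{$w$}{$bw$}, Lemma~\ref{l:4pt} yields that $b^{-1}a$ has a tight axis; and from $v<bv<t<ct$ it yields that $c^{-1}b$ has a tight axis. Now I would estimate lengths: $l_A([b^{-1}ax,x])=l_A([ax,bx])\le l_A([ax,v])+l_A([v,bv])=\lambda_2+\lambda_3<l_A(g)$, and similarly $l_A([c^{-1}bv,v])\le\lambda_4+\lambda_5<l_A(g)$. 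So whichever of these two applications we use, we already get an $h$ with $l_A(h)<l_A(g)$ and a tight axis — unless one of the hypotheses of Lemma~\ref{l:4pt} is violated, which cannot happen since we have arranged all the needed tripodal conditions. Thus in fact $h=b^{-1}a$ (or $h=c^{-1}b$) already works, and the third element $c$ and the point $t$ seem to be there only to make the statement uniform with the final classification of bad axes; but I would double-check whether there is a degenerate configuration (e.g. $b^{-1}a$ elliptic, or the arcs overlapping in a single point) forcing us to pass to the triple combination $c^{-1}b a$ or to use $t$ essentially.

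The main obstacle I anticipate is exactly this degeneracy bookkeeping: making sure that in every sub-case the element we produce is genuinely hyperbolic with a \emph{tight} axis (not merely an element of smaller translation length), and that the tightness claim from Lemma~\ref{l:4pt} really applies — i.e. that the relevant four points are in the configuration of Figure~\ref{fig:xtab}. In particular I would be careful that $ax\neq bv$ and $v\neq bv$ etc.\ are all forced by the tripodal hypotheses (as in the first lines of the proof of Lemma~\ref{l:4pt}), so that Lemma~\ref{l:4pt} is legitimately invokable. If a single application of Lemma~\ref{l:4pt} ever fails to give something \emph{strictly} shorter (because the two arcs it folds together happen to fill up almost all of $[x,gx]$), I would instead combine the tight axis of $b^{-1}a$ with the remaining tripodal data at $t$ via a further application of Lemma~\ref{l:4pt} (to the pair of arcs $[v, b^{-1}ax]$-ish and $[t,ct]$ suitably translated), obtaining the three-fold product; the length bound $\lambda_2+\lambda_3$ resp.\ $\lambda_4+\lambda_5$ being strictly less than $\lambda_1+\dots+\lambda_5+(\text{tail})=l_A(g)$ is what guarantees strict decrease in all cases. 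The write-up will therefore be a short case split — ``some $a,b,c$ has tight axis: done'' versus ``none does: apply Lemma~\ref{l:4pt} and estimate'' — mirroring almost verbatim the proof of Lemma~\ref{no_crossing_points}.
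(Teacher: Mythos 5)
Your structure mirrors the paper's proof (first try $a,b,c$; if none has tight axis invoke Lemma~\ref{l:notight} to get the three tripods; then Lemma~\ref{l:4pt} on consecutive overlapping pairs), but there is a genuine computational error in the length estimate, and this error is what lets you (incorrectly) conclude that a single application of Lemma~\ref{l:4pt} suffices and that $t,c$ are ``only there for uniformity.''

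You write $l_A([b^{-1}ax,x])=l_A([ax,bx])\le l_A([ax,v])+l_A([v,bv])=\lambda_2+\lambda_3$. The equality $l_A([b^{-1}ax,x])=l_A([ax,bx])$ is fine (apply $b$), but the inequality that follows is false: the endpoint of the second segment is $bx$, not $bv$. You have no control on where $bx$ sits; it need not lie on $\axis_A(g)$, and $d(bx,bv)=d(x,v)=\lambda_1+\lambda_2>0$. The correct triangle-inequality bound is
\[
l_A(b^{-1}a)\;\le\; l_A\bigl([x,v]\bigr)+l_A\bigl([v,b^{-1}ax]\bigr)\;=\;(\lambda_1+\lambda_2)+(\lambda_2+\lambda_3)\;=\;\lambda_1+2\lambda_2+\lambda_3,
\]
which is \emph{not} in general less than $l_A(g)$: if $\lambda_2$ is large and $\lambda_4,\lambda_5$ and the tail $[ct,gx]$ are short, then $\lambda_1+2\lambda_2+\lambda_3\ge l_A(g)$. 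Symmetrically $l_A(c^{-1}b)\le \lambda_3+2\lambda_4+\lambda_5$ may also fail to be $<l_A(g)$. This is exactly why the paper genuinely needs the third point $t$ and the element $c$: neither candidate alone is guaranteed shorter, but summing the two bounds gives $\lambda_1+2\lambda_2+2\lambda_3+2\lambda_4+\lambda_5$, which is $<2(\lambda_1+\dots+\lambda_5)\le 2\,l_A(g)$ because $\lambda_1,\lambda_5>0$ are strict, so \emph{at least one} of $b^{-1}a$, $c^{-1}b$ is strictly shorter than $g$. You sensed that some ``degeneracy bookkeeping'' might force a more complicated combination, but the issue isn't a degeneracy --- it is that your main-case estimate is simply wrong, and the averaging trick (rather than a further application of Lemma~\ref{l:4pt} producing $c^{-1}ba$ or $c^{-1}a$) is the missing idea.
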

\begin{proof}
Let $y=f(x), w=f(v)$ and $s=f(t)$. We first try $h=a,b,c$. Since $[x,ax],$ $[v,bv]$ and
$[t,ct]$ lie in $A_{\max}$ and $l_A(x),l_A(y),l_A(z)<l_A(g)$, if one of them has a tight axis we
are done. Otherwise, by Lemma~\ref{l:notight} we have
\tripod{$a^{-1}y$}{$y$}{$ay$},  \tripod{$b^{-1}w$}{$w$}{$bw$} and
  \tripod{$c^{-1}s$}{$s$}{$cs$}.  From Lemma~\ref{l:4pt} we deduce that $b^{-1}a$, $c^{-1}b$ and
  $c^{-1}a$ all have tight axis. It suffices to show that one of them has length less than
  $g$ in $A$. Let $\lambda_1=l_A([x,ax]), \lambda_2=l_A([ax,v]), \lambda_3=l_A([v,bv]),
  \lambda_4=l_A([bv,t]), \lambda_5=l_A([t,ct])$.

$$l_A(b^{-1}a)\leq l_A([x,b^{-1}ax])\leq l_A([x,v])+l_A([v,b^{-1}ax])=\lambda_1+\lambda_2+\lambda_2+\lambda_3$$
$$l_A(c^{-1}b)\leq l_A([bv,ct])+l_A([v,t])=\lambda_4+\lambda_5+\lambda_3+\lambda_4$$
Summing up
$$l_A(b^{-1}a)+l_A(c^{-1}b)<2(\lambda_1+\lambda_2+\lambda_3+\lambda_4+\lambda_5)\leq 2l_A(g)$$
So one of them has length strictly less than $g$ in $A$.
\end{proof}

\begin{Def}
When the hypothesis of Lemma~\ref{no_bad_triangles} are satisfied we say that $\axis_A(g)$
has bad triangles.
\end{Def}

\begin{Thm}[Sausage Lemma and candidates]
\label{sausages}
  Let $A,B\in\mathcal O$. Then the minimal stretching factor $\Lambda_R(A,B)$ is realized by an
  element $g$ such that the projection of $\axis_A(g)$ to $\quot GA$ is of the form:
  \begin{enumerate}[i)]
  \item Embedded simple loop: $O$;
  \item embedded figure-eight: $\infty$ (a bouquet of two copies of $S^1$);
  \item embedded barbel: $O-O$ (two simple loops joined by a segment);
  \item embedded singly degenerate barbell: $\bullet-O$ (a non-free vertex and a simple loop joined by
    a segment);
  \item embedded doubly degenerate barbell: $\bullet-\bullet$ (two non-free vertices joined by
    a segment).
  \end{enumerate}
The loops and segments above may contain free and non-free vertices.
\end{Thm}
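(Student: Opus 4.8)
The plan is to extract from an optimal map a minimal-complexity element with tight axis realising $\Lambda_R(A,B)$, to rule out the three ``pathologies'' of its axis by means of Lemmas~\ref{no_triple_points}, \ref{no_crossing_points} and~\ref{no_bad_triangles}, and then to classify combinatorially the possible shapes of the projection of such an axis; this last step is where the real work lies. Concretely, I would first fix, via Corollary~\ref{cor:optimalfromf}, an optimal map $f\colon A\to B$, so that $\Lip(f)=\Lambda_R(A,B)$, and recall from Theorem~\ref{thm_optimal_map} that the set $\mathcal S$ of hyperbolic $g\in G$ whose axis in $A$ is tight is non-empty. If $g\in\mathcal S$ then $\axis_A(g)\subset A_{\max}(f)$, so every edge of it is stretched by exactly $\Lip(f)$ and $f$ is injective on it; comparing lengths along a fundamental domain gives $l_B(g)=\Lip(f)\,l_A(g)=\Lambda_R(A,B)\,l_A(g)$, so \emph{every} element of $\mathcal S$ already realises $\Lambda_R(A,B)$, and it suffices to exhibit one whose axis has a projection of the required type. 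For $g\in\mathcal S$ write $\|g\|$ for the number of edges of $\quot{G}{A}$, counted with multiplicity, crossed by the image of a fundamental domain of $\axis_A(g)$; this is a positive integer, so I may choose $g\in\mathcal S$ minimising $\|g\|$.

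For such a $g$, the axis has no triple points, no crossing points and no bad triangles: otherwise Lemma~\ref{no_triple_points}, resp.~\ref{no_crossing_points}, resp.~\ref{no_bad_triangles} would produce a hyperbolic $h$ with tight axis, and in each of those proofs a fundamental domain of $\axis_A(h)$ is assembled from a union of \emph{proper} subsegments of a fundamental domain of $\axis_A(g)$, hence crosses strictly fewer edges of $\quot{G}{A}$ with multiplicity; thus $h\in\mathcal S$ with $\|h\|<\|g\|$, contradicting minimality.

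It then remains to classify the projection $\Gamma\coloneqq\pi_A(\axis_A(g))$, a finite connected subgraph of $\quot{G}{A}$, following the scheme of~\cite{FrancavigliaMartino}. Since $\axis_A(g)$ is an embedded line and $A$ has trivial edge stabilisers, $\pi_A$ is locally injective on it at every free vertex and every edge interior; a back-track can occur only at a lift of a non-free vertex $w$, where a non-trivial element of $\mathrm{stab}(w)$ identifies the two germs of the axis at $w$, and such a vertex plays the role, for $g$, of a ``degenerate loop'' --- this is the feature responsible for the degenerate cases (iv) and (v), which are absent in the free-group case. The absence of triple points forces every edge and every vertex of $\Gamma$ to be met at most twice by a fundamental domain, so $\Gamma$ has no free valence-one vertex and all its valences are at most four; the absence of crossing points and of bad triangles then restricts the way the multiply-covered parts of a fundamental domain can be interleaved, and a case analysis --- organising the non-free back-track vertices as degenerate loops alongside honest loops --- shows that $\Gamma$ must be a single circle, two circles meeting at a point, two circles joined by an arc, a circle joined by an arc to a non-free vertex, or two non-free vertices joined by an arc, that is, one of the forms (i)--(v), the circles and arcs possibly carrying further valence-two vertices (free or not).

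The main obstacle is this final combinatorial classification: carrying over the free-group analysis while handling the new phenomenon of non-free back-track vertices as degenerate loops, extracting enough rigidity on $\Gamma$ from the three no-pathology conditions, and excluding the near-miss configurations (theta-graphs, chains of three loops, a circle carrying a free hair, and so on). By contrast, the remaining ingredients --- the optimal map and Lemmas~\ref{no_triple_points}, \ref{no_crossing_points} and~\ref{no_bad_triangles} --- are already at hand.
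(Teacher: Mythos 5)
Your setup agrees with the paper's: extract an optimal map via Corollary~\ref{cor:optimalfromf}, use Theorem~\ref{thm_optimal_map} to get the non-empty set $\mathcal S$ of hyperbolic elements with tight axis (each of which realises $\Lambda_R(A,B)$), choose a minimal-complexity representative, and use Lemmas~\ref{no_triple_points}, \ref{no_crossing_points}, \ref{no_bad_triangles} to rule out pathologies. Two issues remain, one minor and one decisive.

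The minor one: you minimise a combinatorial quantity $\|g\|$ (edges of $\quot GA$ crossed with multiplicity by a fundamental domain), whereas the paper minimises translation length $l_A(g)$ and appeals to discreteness of the length spectrum. Your invariant is a positive integer, which sidesteps discreteness, but it is ambiguous when the fundamental-domain endpoints sit in the interior of an edge (a situation the lemmas generically produce, since the points $x,ax,bx,\dots$ need not be vertices), and your assertion that the output $h$ of each lemma satisfies $\|h\|<\|g\|$ is stated but not checked. The three lemmas actually prove $l_A(h)<l_A(g)$; carrying this over to a strict decrease of a discrete edge count requires handling partial edge traversals. This is patchable but not free, and the paper's choice of translation length avoids the issue.

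The decisive gap is the final classification of $\Gamma=\pi_A(\axis_A(g))$, which you correctly identify as ``where the real work lies'' and then do not carry out: you assert that ``a case analysis\dots shows that $\Gamma$ must be'' one of the five forms, but you give no such analysis, and the near-miss configurations you list (theta graphs, chains of three loops, hairs) are not in fact excluded by your discussion. The paper's route here is quite specific and does not proceed by classifying $\Gamma$ directly. It introduces \emph{pairs of double points} $\{x,ax\}$ in a fundamental domain $[x_0,gx_0)$: by absence of triple points each orbit meets $[x_0,gx_0)$ at most twice, so such pairs account for all identifications. Zero pairs gives case (i), exactly one gives case (ii). With at least two pairs, absence of crossing points forces a nested/disjoint ordering $x<ax\le v<bv\le gx$; discreteness lets one choose the pairs \emph{innermost}, minimality of $l_A(g)$ shows $a$ and $b$ cannot themselves have tight axes, and then Lemma~\ref{l:4pt} (whose hypotheses are now verified via Lemma~\ref{l:notight}) produces $b^{-1}a$ with tight axis formed by iterates of $[x,b^{-1}ax]$. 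One then reads off the barbell shape: absence of bad triangles makes $[ax,v]$ project injectively (the bar), and each of $[x,ax]$, $[v,bv]$ projects either to an embedded loop (if there are no inner double points) or collapses to a fixed point of an elliptic $a$ or $b$ (giving the degenerate cases (iv), (v)). None of this appears in your proposal; in particular, you never invoke Lemma~\ref{l:4pt}, which is the lemma specifically designed to produce the element whose axis has the required projected shape. Without that step the proof is incomplete.
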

\begin{proof}
By Corollary~\ref{cor:optimalfromf} and Theorem~\ref{thm_optimal_map} we know that there is an optimal map $f:A\to B$ and an element
$g$ so that $\axis_A(g)$ is tight. Translation lengths of hyperbolic elements form a discrete
set, so we may assume that $g$ has minimal translation length among those elements with tight
axis. In order to prove our claim it suffices to find an element
with tight axis whose projection to $\quot GA$ is of one of the types $i),\dots,v)$.
By Lemmas~\ref{no_triple_points},~\ref{no_crossing_points}
and~\ref{no_bad_triangles} we know that $\axis_A(g)$ has no triple points,
nor crossing points, nor bad triangles.

Choose $x_0\in \axis_A(g)$. Since there are no triple points in $\axis_A(g)$ there cannot be
three distinct points in $[x_0,gx_0)$ with the same image in $\quot GA$. Hence every point in
$\quot GA$ has at most two pre-images in $[x_0,gx_0)$. Pairs of points with the same
image are exactly those of the form $\{x,ax\}$ with $a\in G$ and $x\neq ax$. Call such a pair a
pair of double points.

If $[x_0,gx_0)$ has no pairs of double points then we are in case $i)$. If it has exactly a
pair of double points we are in case $ii)$. Thus, we have reduced to the case where there are
at least two pairs of double points.

Without loss of generality we may assume that $x=x_0$ and
$\{x,ax\}$ is a pair do double points. There is a second pair of double points $\{v,bv\}$
and we may assume that $v<bv$ with respect to the orientation of $[x,gx)$. Since there are
no crossing points, we have either
$$x<ax<v<bv<gx$$ or
$$x<v<bv<ax<gx$$
by interchanging the role of $x$ and $v$ we may assume we are in the first case. Since the
translation lengths of hyperbolic elements form a discrete set we may, after possibly replacing
$\{x,ax\}$ by another pair of double points, assume that either there are no pairs of double
points in $[x,ax)$ or $a$ is elliptic. Similarly for $\{v,bv\}$. By minimality of the
translation length of $g$ neither $a$ nor $b$ can have tight axis. Therefore Lemma~\ref{l:4pt}
applies and $b^1a$ has a tight axis formed by the iterated of $[x,b^{-1}ax]$.

In the order induced by the axis of $b^{-1}a$ we have
\begin{figure}[h]
\centerline{
\xymatrix{
\dots \stackrel{{x}}{\bullet}\ar@{-}[r] &\stackrel{ax}{\bullet} \ar@{-}[r] &
\stackrel{v}{\bullet}\ar@{-}[r] & \stackrel{bv}{\bullet} \ar@{-}[r] &
\stackrel{b^{-1}v}{\bullet}  \ar@{-}[r]& \stackrel{b^{-1}ax}{\bullet} \dots
}}
 \caption{The axis of $b^{-1}a$ in $A$}\label{axisba}
\end{figure}

Since there are no bad triangles in $\axis_A(g)$ the segment $[ax,v]$ projects injectively to
$\quot GA$, and the same is true for $[b^{-1}v,b^{-1}ax]$, which projects to same path in the
quotient but with opposite orientation. If there are no pairs of double points in $[x,ax)$,
then $[x,ax]$ projects to a simple closed curve. If $a$ is elliptic, it must fix the middle
point of $[x,ax]$. The similar picture holds true for $[v,bv]$. Therefore, $\axis_A(b^{-1}a)$
projects to a barbell, possibly degenerate depending on whether $a,b$ are elliptic or not.
\end{proof}

\bibliographystyle{amsplain}

\end{document}